\newcommand {\emptycomment}[1]{} 
\newcommand{\nc}{\newcommand}
\newcommand{\delete}[1]{}
\nc{\mfootnote}[1]{\footnote{#1}} 
\nc{\todo}[1]{\tred{To do:} #1}
\nc{\mlabel}[1]{\label{#1}}  
\nc{\mcite}[1]{\cite{#1}}  
\nc{\mref}[1]{\ref{#1}}  
\nc{\mbibitem}[1]{\bibitem{#1}} 
\nc{\mlabel}[1]{\label{#1}  
{\hfill \hspace{1cm}{\bf{{\ }\hfill(#1)}}}}
\nc{\mcite}[1]{\cite{#1}{{\bf{{\ }(#1)}}}}  
\nc{\mref}[1]{\ref{#1}{{\bf{{\ }(#1)}}}}  
\nc{\mbibitem}[1]{\bibitem[\bf #1]{#1}} 
\newtheorem{thm}{Theorem}[section]
\newtheorem{lem}[thm]{Lemma}
\newtheorem{cor}[thm]{Corollary}
\newtheorem{pro}[thm]{Proposition}
\newtheorem{ex}[thm]{Example}
\newtheorem{rmk}[thm]{Remark}
\newtheorem{defi}[thm]{Definition}
\nc{\tred}[1]{\textcolor{red}{#1}}
\nc{\tblue}[1]{\textcolor{blue}{#1}}
\nc{\tgreen}[1]{\textcolor{green}{#1}}
\nc{\tpurple}[1]{\textcolor{purple}{#1}}
\nc{\btred}[1]{\textcolor{red}{\bf #1}}
\nc{\btblue}[1]{\textcolor{blue}{\bf #1}}
\nc{\btgreen}[1]{\textcolor{green}{\bf #1}}
\nc{\btpurple}[1]{\textcolor{purple}{\bf #1}}
\nc{\dcy}[1]{\textcolor{purple}{Chengyu:#1}}
\nc{\cm}[1]{\textcolor{red}{Chengming:#1}}
\nc{\li}[1]{\textcolor{blue}{#1}}
\nc{\lir}[1]{\textcolor{blue}{Li: #1}}
\nc{\lit}[2]{\textcolor{blue}{#1}{}} 
\nc{\yh}[1]{\textcolor{green}{Yunhe: #1}}
\nc{\twovec}[2]{\left(\begin{array}{c} #1 \\ #2\end{array} \right )}
\nc{\threevec}[3]{\left(\begin{array}{c} #1 \\ #2 \\ #3 \end{array}\right )}
\nc{\twomatrix}[4]{\left(\begin{array}{cc} #1 & #2\\ #3 & #4 \end{array} \right)}
\nc{\threematrix}[9]{{\left(\begin{matrix} #1 & #2 & #3\\ #4 & #5 & #6 \\ #7 & #8 & #9 \end{matrix} \right)}}
\nc{\twodet}[4]{\left|\begin{array}{cc} #1 & #2\\ #3 & #4 \end{array} \right|}
\nc{\rk}{\mathrm{r}}
\nc{\gensp}{V} 
\nc{\relsp}{\Lambda} 
\nc{\leafsp}{X}    
\nc{\treesp}{\overline{\calt}} 
\nc{\vin}{{\mathrm Vin}}    
\nc{\lin}{{\mathrm Lin}}    
\nc{\gop}{{\,\omega\,}}     
\nc{\gopb}{{\,\nu\,}}
\nc{\svec}[2]{{\tiny\left(\begin{matrix}#1\\
#2\end{matrix}\right)\,}}  
\nc{\ssvec}[2]{{\tiny\left(\begin{matrix}#1\\
#2\end{matrix}\right)\,}} 
\nc{\typeI}{local cocycle $3$-Lie bialgebra\xspace}
\nc{\typeIs}{local cocycle $3$-Lie bialgebras\xspace}
\nc{\typeII}{double construction $3$-Lie bialgebra\xspace}
\nc{\typeIIs}{double construction $3$-Lie bialgebras\xspace}
\nc{\bia}{{$\mathcal{P}$-bimodule ${\bf k}$-algebra}\xspace}
\nc{\bias}{{$\mathcal{P}$-bimodule ${\bf k}$-algebras}\xspace}
\nc{\rmi}{{\mathrm{I}}}
\nc{\rmii}{{\mathrm{II}}}
\nc{\rmiii}{{\mathrm{III}}}
\nc{\pr}{{\mathrm{pr}}}
\nc{\ttau}{\tau}
\nc{\tS}{S}
\nc{\pll}{\beta}
\nc{\plc}{\epsilon}
\nc{\ass}{{\mathit{Ass}}}
\nc{\lie}{{\mathit{Lie}}}
\nc{\comm}{{\mathit{Comm}}}
\nc{\dend}{{\mathit{Dend}}}
\nc{\zinb}{{\mathit{Zinb}}}
\nc{\tdend}{{\mathit{TDend}}}
\nc{\prelie}{{\mathit{preLie}}}
\nc{\postlie}{{\mathit{PostLie}}}
\nc{\quado}{{\mathit{Quad}}}
\nc{\octo}{{\mathit{Octo}}}
\nc{\ldend}{{\mathit{ldend}}}
\nc{\lquad}{{\mathit{LQuad}}}
 \nc{\adec}{\check{;}} \nc{\aop}{\alpha}
\nc{\dftimes}{\widetilde{\otimes}} \nc{\dfl}{\succ} \nc{\dfr}{\prec}
\nc{\dfc}{\circ} \nc{\dfb}{\bullet} \nc{\dft}{\star}
\nc{\dfcf}{{\mathbf k}} \nc{\apr}{\ast} \nc{\spr}{\cdot}
\nc{\twopr}{\circ} \nc{\tspr}{\star} \nc{\sempr}{\ast}
\nc{\disp}[1]{\displaystyle{#1}}
\nc{\bin}[2]{ (_{\stackrel{\scs{#1}}{\scs{#2}}})}  
\nc{\binc}[2]{ \left (\!\! \begin{array}{c} \scs{#1}\\
    \scs{#2} \end{array}\!\! \right )}  
\nc{\bincc}[2]{  \left ( {\scs{#1} \atop
    \vspace{-.5cm}\scs{#2}} \right )}  
\nc{\sarray}[2]{\begin{array}{c}#1 \vspace{.1cm}\\ \hline
    \vspace{-.35cm} \\ #2 \end{array}}
\nc{\bs}{\bar{S}} \nc{\dcup}{\stackrel{\bullet}{\cup}}
\nc{\dbigcup}{\stackrel{\bullet}{\bigcup}} \nc{\etree}{\big |}
\nc{\la}{\longrightarrow} \nc{\fe}{\'{e}} \nc{\rar}{\rightarrow}
\nc{\dar}{\downarrow} \nc{\dap}[1]{\downarrow
\rlap{$\scriptstyle{#1}$}} \nc{\uap}[1]{\uparrow
\rlap{$\scriptstyle{#1}$}} \nc{\defeq}{\stackrel{\rm def}{=}}
\nc{\dis}[1]{\displaystyle{#1}} \nc{\dotcup}{\,
\displaystyle{\bigcup^\bullet}\ } \nc{\sdotcup}{\tiny{
\displaystyle{\bigcup^\bullet}\ }} \nc{\hcm}{\ \hat{,}\ }
\nc{\hcirc}{\hat{\circ}} \nc{\hts}{\hat{\shpr}}
\nc{\lts}{\stackrel{\leftarrow}{\shpr}}
\nc{\rts}{\stackrel{\rightarrow}{\shpr}} \nc{\lleft}{[}
\nc{\lright}{]} \nc{\uni}[1]{\tilde{#1}} \nc{\wor}[1]{\check{#1}}
\nc{\free}[1]{\bar{#1}} \nc{\den}[1]{\check{#1}} \nc{\lrpa}{\wr}
\nc{\curlyl}{\left \{ \begin{array}{c} {} \\ {} \end{array}
    \right .  \!\!\!\!\!\!\!}
\nc{\curlyr}{ \!\!\!\!\!\!\!
    \left . \begin{array}{c} {} \\ {} \end{array}
    \right \} }
\nc{\leaf}{\ell}       
\nc{\longmid}{\left | \begin{array}{c} {} \\ {} \end{array}
    \right . \!\!\!\!\!\!\!}
\nc{\ot}{\otimes} \nc{\sot}{{\scriptstyle{\ot}}}
\nc{\otm}{\overline{\ot}}
\nc{\ora}[1]{\stackrel{#1}{\rar}}
\nc{\ola}[1]{\stackrel{#1}{\la}}
\nc{\pltree}{\calt^\pl}
\nc{\epltree}{\calt^{\pl,\NC}}
\nc{\rbpltree}{\calt^r}
\nc{\scs}[1]{\scriptstyle{#1}} \nc{\mrm}[1]{{\rm #1}}
\nc{\dirlim}{\displaystyle{\lim_{\longrightarrow}}\,}
\nc{\invlim}{\displaystyle{\lim_{\longleftarrow}}\,}
\nc{\mvp}{\vspace{0.5cm}} \nc{\svp}{\vspace{2cm}}
\nc{\vp}{\vspace{8cm}} \nc{\proofbegin}{\noindent{\bf Proof: }}
\nc{\proofend}{$\blacksquare$ \vspace{0.5cm}}
\nc{\freerbpl}{{F^{\mathrm RBPL}}}
\nc{\sha}{{\mbox{\cyr X}}}  
\nc{\ncsha}{{\mbox{\cyr X}^{\mathrm NC}}} \nc{\ncshao}{{\mbox{\cyr
X}^{\mathrm NC,\,0}}}
\nc{\shpr}{\diamond}    
\nc{\shprm}{\overline{\diamond}}    
\nc{\shpro}{\diamond^0}    
\nc{\shprr}{\diamond^r}     
\nc{\shpra}{\overline{\diamond}^r}
\nc{\shpru}{\check{\diamond}} \nc{\catpr}{\diamond_l}
\nc{\rcatpr}{\diamond_r} \nc{\lapr}{\diamond_a}
\nc{\sqcupm}{\ot}
\nc{\lepr}{\diamond_e} \nc{\vep}{\varepsilon} \nc{\labs}{\mid\!}
\nc{\rabs}{\!\mid} \nc{\hsha}{\widehat{\sha}}
\nc{\lsha}{\stackrel{\leftarrow}{\sha}}
\nc{\rsha}{\stackrel{\rightarrow}{\sha}} \nc{\lc}{\lfloor}
\nc{\rc}{\rfloor}
\nc{\tpr}{\sqcup}
\nc{\nctpr}{\vee}
\nc{\plpr}{\star}
\nc{\rbplpr}{\bar{\plpr}}
\nc{\sqmon}[1]{\langle #1\rangle}
\nc{\forest}{\calf}
\nc{\altx}{\Lambda_X} \nc{\vecT}{\vec{T}} \nc{\onetree}{\bullet}
\nc{\Ao}{\check{A}}
\nc{\seta}{\underline{\Ao}}
\nc{\deltaa}{\overline{\delta}}
\nc{\trho}{\tilde{\rho}}
\nc{\rpr}{\circ}
\nc{\dpr}{{\tiny\diamond}}
\nc{\rprpm}{{\rpr}}
\nc{\mmbox}[1]{\mbox{\ #1\ }} \nc{\ann}{\mrm{ann}}
\nc{\Aut}{\mrm{Aut}} \nc{\can}{\mrm{can}}
\nc{\twoalg}{{two-sided algebra}\xspace}
\nc{\colim}{\mrm{colim}}
\nc{\Cont}{\mrm{Cont}} \nc{\rchar}{\mrm{char}}
\nc{\cok}{\mrm{coker}} \nc{\dtf}{{R-{\rm tf}}} \nc{\dtor}{{R-{\rm
tor}}}
\nc{\depth}{{\mrm d}}
\nc{\Div}{{\mrm Div}} \nc{\End}{\mrm{End}} \nc{\Ext}{\mrm{Ext}}
\nc{\Fil}{\mrm{Fil}} \nc{\Frob}{\mrm{Frob}} \nc{\Gal}{\mrm{Gal}}
\nc{\GL}{\mrm{GL}} \nc{\Hom}{\mrm{Hom}} \nc{\hsr}{\mrm{H}}
\nc{\hpol}{\mrm{HP}} \nc{\id}{\mrm{id}} \nc{\im}{\mrm{im}}
\nc{\incl}{\mrm{incl}} \nc{\length}{\mrm{length}}
\nc{\LR}{\mrm{LR}} \nc{\mchar}{\rm char} \nc{\NC}{\mrm{NC}}
\nc{\mpart}{\mrm{part}} \nc{\pl}{\mrm{PL}}
\nc{\ql}{{\QQ_\ell}} \nc{\qp}{{\QQ_p}}
\nc{\rank}{\mrm{rank}} \nc{\rba}{\rm{RBA }} \nc{\rbas}{\rm{RBAs }}
\nc{\rbpl}{\mrm{RBPL}}
\nc{\rbw}{\rm{RBW }} \nc{\rbws}{\rm{RBWs }} \nc{\rcot}{\mrm{cot}}
\nc{\rest}{\rm{controlled}\xspace}
\nc{\rdef}{\mrm{def}} \nc{\rdiv}{{\rm div}} \nc{\rtf}{{\rm tf}}
\nc{\rtor}{{\rm tor}} \nc{\res}{\mrm{res}} \nc{\SL}{\mrm{SL}}
\nc{\Spec}{\mrm{Spec}} \nc{\tor}{\mrm{tor}} \nc{\Tr}{\mrm{Tr}}
\nc{\mtr}{\mrm{sk}}
\nc{\ab}{\mathbf{Ab}} \nc{\Alg}{\mathbf{Alg}}
\nc{\Algo}{\mathbf{Alg}^0} \nc{\Bax}{\mathbf{Bax}}
\nc{\Baxo}{\mathbf{Bax}^0} \nc{\RB}{\mathbf{RB}}
\nc{\RBo}{\mathbf{RB}^0} \nc{\BRB}{\mathbf{RB}}
\nc{\Dend}{\mathbf{DD}} \nc{\bfk}{{\bf k}} \nc{\bfone}{{\bf 1}}
\nc{\base}[1]{{a_{#1}}} \nc{\detail}{\marginpar{\bf More detail}
    \noindent{\bf Need more detail!}
    \svp}
\nc{\Diff}{\mathbf{Diff}} \nc{\gap}{\marginpar{\bf
Incomplete}\noindent{\bf Incomplete!!}
    \svp}
\nc{\FMod}{\mathbf{FMod}} \nc{\mset}{\mathbf{MSet}}
\nc{\rb}{\mathrm{RB}} \nc{\Int}{\mathbf{Int}}
\nc{\Mon}{\mathbf{Mon}}
\nc{\remarks}{\noindent{\bf Remarks: }}
\nc{\OS}{\mathbf{OS}} 
\nc{\Rep}{\mathbf{Rep}}
\nc{\Rings}{\mathbf{Rings}} \nc{\Sets}{\mathbf{Sets}}
\nc{\DT}{\mathbf{DT}}
\nc{\BA}{{\mathbb A}} \nc{\CC}{{\mathbb C}} \nc{\DD}{{\mathbb D}}
\nc{\EE}{{\mathbb E}} \nc{\FF}{{\mathbb F}} \nc{\GG}{{\mathbb G}}
\nc{\HH}{{\mathbb H}} \nc{\LL}{{\mathbb L}} \nc{\NN}{{\mathbb N}}
\nc{\QQ}{{\mathbb Q}} \nc{\RR}{{\mathbb R}} \nc{\BS}{{\mathbb{S}}} \nc{\TT}{{\mathbb T}}
\nc{\VV}{{\mathbb V}} \nc{\ZZ}{{\mathbb Z}}
\nc{\calao}{{\mathcal A}} \nc{\cala}{{\mathcal A}}
\nc{\calc}{{\mathcal C}} \nc{\cald}{{\mathcal D}}
\nc{\cale}{{\mathcal E}} \nc{\calf}{{\mathcal F}}
\nc{\calfr}{{{\mathcal F}^{\,r}}} \nc{\calfo}{{\mathcal F}^0}
\nc{\calfro}{{\mathcal F}^{\,r,0}} \nc{\oF}{\overline{F}}
\nc{\calg}{{\mathcal G}} \nc{\calh}{{\mathcal H}}
\nc{\cali}{{\mathcal I}} \nc{\calj}{{\mathcal J}}
\nc{\call}{{\mathcal L}} \nc{\calm}{{\mathcal M}}
\nc{\caln}{{\mathcal N}} \nc{\calo}{{\mathcal O}}
\nc{\calp}{{\mathcal P}} \nc{\calq}{{\mathcal Q}} \nc{\calr}{{\mathcal R}}
\nc{\calt}{{\mathcal T}} \nc{\caltr}{{\mathcal T}^{\,r}}
\nc{\calu}{{\mathcal U}} \nc{\calv}{{\mathcal V}}
\nc{\calw}{{\mathcal W}} \nc{\calx}{{\mathcal X}}
\nc{\CA}{\mathcal{A}}
\nc{\fraka}{{\mathfrak a}} \nc{\frakB}{{\mathfrak B}}
\nc{\frakb}{{\mathfrak b}} \nc{\frakd}{{\mathfrak d}}
\nc{\oD}{\overline{D}}
\nc{\frakF}{{\mathfrak F}} \nc{\frakg}{{\mathfrak g}}
\nc{\frakm}{{\mathfrak m}} \nc{\frakM}{{\mathfrak M}}
\nc{\frakMo}{{\mathfrak M}^0} \nc{\frakp}{{\mathfrak p}}
\nc{\frakS}{{\mathfrak S}} \nc{\frakSo}{{\mathfrak S}^0}
\nc{\fraks}{{\mathfrak s}} \nc{\os}{\overline{\fraks}}
\nc{\frakT}{{\mathfrak T}}
\nc{\oT}{\overline{T}}
\nc{\frakX}{{\mathfrak X}} \nc{\frakXo}{{\mathfrak X}^0}
\nc{\frakx}{{\mathbf x}}
\nc{\frakTx}{\frakT}      
\nc{\frakTa}{\frakT^a}        
\nc{\frakTxo}{\frakTx^0}   
\nc{\caltao}{\calt^{a,0}}   
\nc{\ox}{\overline{\frakx}} \nc{\fraky}{{\mathfrak y}}
\nc{\frakz}{{\mathfrak z}} \nc{\oX}{\overline{X}}
\font\cyr=wncyr10
\nc{\redtext}[1]{\textcolor{red}{#1}}
\begin{document}

\title{3-Lie bialgebras and 3-Lie classical Yang-Baxter equations in low dimensions}

\author{Chengyu Du}
\address{Chern Institute of Mathematics\& LPMC, Nankai University, Tianjin 300071, China} \email{dcystory@163.com}

\author{Chengming Bai}
\address{Chern Institute of Mathematics \& LPMC, Nankai University, Tianjin 300071, China}
         \email{baicm@nankai.edu.cn}

\author{Li Guo}
\address{Department of Mathematics and Computer Science,
         Rutgers University,
         Newark, NJ 07102}
\email{liguo@rutgers.edu}

\date{\today}

\begin{abstract}
In this paper, we give some low-dimensional examples of local cocycle 3-Lie bialgebras and double construction 3-Lie bialgebras which were introduced in the study of the classical Yang-Baxter equation and Manin triples for 3-Lie algebras. We give an explicit and practical formula to compute the skew-symmetric solutions of the 3-Lie classical Yang-Baxter equation (CYBE). As an illustration, we obtain all skew-symmetric solutions of the 3-Lie CYBE in complex 3-Lie algebras of dimension 3 and 4  and then the induced
local cocycle 3-Lie bialgebras. On the other hand, we classify the
double construction 3-Lie bialgebras for complex 3-Lie algebras in dimensions 3 and 4 and then give the corresponding 8-dimensional
pseudo-metric 3-Lie algebras.
\end{abstract}

\subjclass[2010]{16T10, 16T25, 15A75, 17A30, 17B62, 81T30}

\keywords{bialgebra, 3-Lie algebra, 3-Lie
bialgebra, classical Yang-Baxter equation,  Manin triple}

\maketitle

\tableofcontents
\numberwithin{equation}{section}
\allowdisplaybreaks

\section{Introduction}

Lie algebras have been generalized to higher arities as $n$-Lie
algebras (\cite{Filippov,Kas,Ling}), which have connections with
several fields of mathematics and physics. For example, the
algebraic structure of $n$-Lie algebras correspond to Nambu
mechanics (\cite{Awata,Gautheron,Nambu,Tak}). As a special case of
$n$-Lie algebras, 3-Lie algebras play an important role in string
theory (\cite{Bagger,Figu1,Gus,Ho1,Papa}). As an instance, the
structure of 3-Lie algebras applied in the study of the
supersymmetry and gauge symmetry transformations of the
world-volume theory of multiple coincident M2-branes. In
particular, the metric 3-Lie algebras, or more generally, the
3-Lie algebras with invariant symmetric bilinear forms even
attract more attentions in physics. In fact, the invariant inner
product of a 3-Lie algebra is very useful in order to obtain the
correct equations of motion for the Begger-Lambert theory from a
Lagrangian that is invariant under certain symmetries. In order to
find some new Bagger-Lambert Lagrangians, it is an approach by
concerning 3-Lie algebras with metrics having signature $(p,q)$,
or with a degenerate invariant symmetric bilinear form. Therefore,
it is worthwhile to find new 3-Lie algebras with invariant
symmetric bilinear forms.

On the other hand, it is interesting to consider the bialgebra
structures of 3-Lie algebras. Giving a 3-Lie algebra
$(A,[\cdot,\cdot,\cdot])$, a coalgebra $(A,\Delta)$ such that
$(A^{\ast},\Delta^{\ast})$ is also a 3-Lie algebra, the most
important part for a bialgebra theory is the compatibility
conditions. As pointed out in \cite{C.Bai}, it is quite common for
an algebraic system to have multiple bialgebra structures that
differ only by their compatibility conditions. A good
compatibility condition is prescribed on one hand by a strong
motivation and potential applications, and on the other hand by a
rich structure theory and effective constructions. Motivated by
the well-known Lie bialgebra theory, the following compatibility
conditions are applied in the construction of the bialgebra theory
for 3-Lie algebras:
\begin{enumerate}
\item the comultiplication $\Delta$ satisfies certain
``derivation" condition;
\item the comultiplication $\Delta$ is a
1-cocycle on $A$;
\item there is a Manin triple $(A\oplus
A^{\ast},~A,~A^{\ast})$.
\end{enumerate}
The above three conditions are equivalent in Lie algebras, but the equivalences  are lost when they are extended in the context of 3-Lie algebras. Hence these conditions lead to the following three approaches respectively.
\begin{enumerate}
\item Based on Condition (a), there is an approach of bialgebra theory for 3-Lie algebras introduced in \cite{R.Bai}. Unfortunately, it is a formal generalization in certain sense and neither a coboundary theory nor the structure on the double space $A\oplus A^*$ is known.
\item Motivated by Condition (b) with certain adjustments on the so-called ``1-cocycles", there is a bialgebra theory for 3-Lie algebras which is called ``local
cocycle 3-Lie algebra" in \cite{C.Bai}.  There is a coboundary theory which leads to the introduction of an analogue of the classical Yang-Baxter equation (CYBE), namely,
3-Lie CYBE. That is, from a skew-symmetric solution $r\in A\otimes A$ of 3-Lie CYBE in a 3-Lie algebra $A$, a local cocycle 3-Lie bialgebra is obtained.
\item Based on Condition (c) with the introduction of an analogue of Manin triple for 3-Lie algebras, a bialgebra theory for 3-Lie bialgebras which is called
``double construction 3-Lie algebra" was given in \cite{C.Bai}. Such a construction naturally provides a pseudo-metric 3-Lie algebra structure over the double space $A\oplus A^{\ast}$ with signature $(n,n)$, where $n=\dim A$, for the aforementioned study of Bagger-Lambert Lagrangians.
\end{enumerate}

In this paper, we continue the study of local cocycle and the double construction 3-Lie bialgebras. The main purpose is to provide examples of such 3-Lie bialgebras systematically. For local cocycle 3-Lie bialgebras, we determine the examples from all skew-symmetric solutions of the 3-Lie CYBE. For double construction 3-Lie bialgebras, we obtain a complete classification. We give an explicit and practical formula to compute the skew-symmetric solutions of 3-Lie CYBE and then as an illustration, we give all skew-symmetric solutions of 3-Lie CYBE in the complex 3-Lie algebras in dimension 3 and 4 whose classification is already known (cf. \cite{BaiR}). Hence the induced local cocycle 3-Lie bialgebras are obtained. Besides, we classify the
double construction 3-Lie bialgebras for the complex 3-Lie algebras in dimensions 3 and 4. As a byproduct, for the non-trivial cases, certain 8-dimensional
pseudo-metric 3-Lie algebras are obtained explicitly. These examples can be regarded as a guide for a further development.

The paper is organized as follows. In Section~\ref{sec:bas}, we
give some elementary facts on 3-Lie algebras, the local cocycle 3-Lie bialgebras, the 3-Lie CYBE and the double
construction 3-Lie bialgebras. In Section~\ref{sec:cybe}, we find
all skew-symmetric solutions of 3-Lie CYBE in the complex 3-Lie
algebras in dimension 3 and 4 and the induced local cocycle 3-Lie
bialgebras are given. In Section 4, we classify the double
construction 3-Lie bialgebras for the complex 3-Lie algebras in
dimensions 3 and 4 and hence give certain corresponding
8-dimensional pseudo-metric 3-Lie algebras.

\section{3-Lie algebras and 3-Lie bialgebras}
\label{sec:bas}

In this section we recall notions and results on 3-Lie algebras and 3-Lie bialgebras which will be needed later in the paper. We follow~\cite{C.Bai} to which we refer the reader for further details.

\subsection{3-Lie algebras}

\begin{defi}\label{th:2.1}                                                                             \emph{(\cite{Filippov})} {\rm A {\bf 3-Lie algebra} is a vector space $A$ with a skew-symmetric linear map (3-Lie bracket) $[\cdot ,\cdot ,\cdot ]:\otimes^3 A\rightarrow A$ such that the following \textbf{Fundamental Identity} holds:
\begin{equation}\label{eq:2.1}
[x_1,x_2,[x_3,x_4,x_5]]=[[x_1,x_2,x_3],x_4,x_5]+[x_3,[x_1,x_2,x_4],x_5]+[x_3,x_4,[x_1,x_2,x_5]],
\end{equation}
for any $x_1,\cdots,x_5\in A$.}
\end{defi}
The fundamental identity could be rewritten with the operator
\begin{equation}\label{eq:2.2}
\textrm{ad}_{x_1,x_2}:A\rightarrow A,~~~~~~\textrm{ad}_{x_1,x_2}x=[x_1,x_2,x]
\end{equation}
in the form as
\begin{equation}\label{eq:2.3}
\textrm{ad}_{x_1,x_2}[x_3,x_4,x_5]=[\textrm{ad}_{x_1,x_2}x_3,x_4,x_5]+[x_3,\textrm{ad}_{x_1,x_2}x_4,x_5]+[x_3,x_4,\textrm{ad}_{x_1,x_2}x_5].
\end{equation}
\begin{defi}\label{th:2.2}                                                                        \emph{(\cite{Dzhu,Kas})} {\rm Let $V$ be a vector space. A {\bf representation of a 3-Lie algebra} $A$ on $V$ is a skew-symmetric linear map $\rho:\otimes^2 A\rightarrow gl(V)$ such that for any $x_1,x_2,x_3,x_4\in A$,}
\begin{eqnarray*}
&(i)\; \rho(x_1,x_2)\rho(x_3,x_4)-\rho(x_3,x_4)\rho(x_1,x_2)=\rho([x_1,x_2,x_3],x_4)-\rho([x_1,x_2,x_4],x_3);\\
&(ii)\; \rho([x_1,x_2,x_3],x_4)=\rho(x_1,x_2)\rho(x_3,x_4)+\rho(x_2,x_3)\rho(x_1,x_4)+\rho(x_3,x_1)\rho(x_2,x_4).
\end{eqnarray*}
\end{defi}

Let $(V,\rho)$ be a representation of a $3$-Lie algebra $A$. Define $\rho^{\ast}:\otimes^2 A\longrightarrow\mathfrak{gl}(V^{\ast})$
by
\begin{equation}\label{eq:dual}
\langle \rho^{\ast}(x_1,x_2) \alpha, v\rangle =-\langle \alpha,\rho(x_1,x_2) v\rangle,\quad \forall \alpha\in V^{\ast},~x_1,x_2\in A,~ v\in V.
\end{equation}
\begin{pro}
With the above notations, $(V^{\ast},\rho^{\ast})$ is a representation of $A$, called the dual representation.
\end{pro}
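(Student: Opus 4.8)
The plan is to verify directly that $\rho^{\ast}$ is skew-symmetric and satisfies conditions (i) and (ii) of Definition~\ref{th:2.2}, translating each assertion about $\rho^{\ast}$ on $V^{\ast}$ into one about $\rho$ on $V$ through the defining pairing~\eqref{eq:dual}. The one bookkeeping rule that governs everything is that for all $x_1,x_2,x_3,x_4\in A$, $\alpha\in V^{\ast}$ and $v\in V$,
\begin{equation*}
\langle \rho^{\ast}(x_1,x_2)\rho^{\ast}(x_3,x_4)\alpha,\,v\rangle=\langle\alpha,\,\rho(x_3,x_4)\rho(x_1,x_2)v\rangle;
\end{equation*}
the two minus signs from~\eqref{eq:dual} cancel, but the order of composition is reversed. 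Skew-symmetry of $\rho^{\ast}$ is immediate from that of $\rho$. Condition (i) for $\rho^{\ast}$ then falls out at once: after pairing both sides against $v$ and $\alpha$, each side acquires an overall minus sign and the required identity becomes exactly condition (i) for $\rho$, since the reversal of order turns the commutator for $\rho^{\ast}$ into $-[\rho(x_1,x_2),\rho(x_3,x_4)]$ paired against $v$, matching the bracket terms on the right.

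The delicate point is condition (ii), where the order-reversal above is fatal to a naive argument because (ii) is not symmetric under interchanging the two composed factors. Unwinding the pairing, condition (ii) for $\rho^{\ast}$ is equivalent to the operator identity on $V$
\begin{equation*}
\rho([x_1,x_2,x_3],x_4)+\rho(x_3,x_4)\rho(x_1,x_2)+\rho(x_1,x_4)\rho(x_2,x_3)+\rho(x_2,x_4)\rho(x_3,x_1)=0.
\end{equation*}
Using condition (ii) for $\rho$ to rewrite the first term, the left-hand side becomes the symmetric sum of anticommutators (with $\{P,Q\}:=PQ+QP$)
\begin{equation*}
S:=\{\rho(x_1,x_2),\rho(x_3,x_4)\}+\{\rho(x_2,x_3),\rho(x_1,x_4)\}+\{\rho(x_3,x_1),\rho(x_2,x_4)\},
\end{equation*}
so the entire proposition reduces to proving $S=0$.

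To establish $S=0$ I would evaluate the same four-term expression a second way: in each product $\rho(x_i,x_j)\rho(x_k,x_l)$ of $S$ use condition (i) for $\rho$ to interchange the two factors, at the cost of bracket terms of the form $\rho([\cdot,\cdot,\cdot],\cdot)$; after collecting these terms, applying condition (ii) once more, and using the cyclic symmetry $[x_1,x_2,x_3]=[x_2,x_3,x_1]=[x_3,x_1,x_2]$ of the skew-symmetric bracket, the expression reduces to $-2S$. Comparing the two evaluations gives $3S=0$, hence $S=0$ because the ground field is $\CC$. This establishes condition (ii) for $\rho^{\ast}$ and finishes the proof. The main obstacle is precisely this last step: unlike condition (i), condition (ii) does not transpose cleanly, so one must discover the auxiliary identity $S=0$ and prove it by the two-way evaluation; everything else is routine unwinding of the duality pairing.
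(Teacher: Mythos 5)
The paper states this proposition without proof (it is recalled from \cite{C.Bai}), so there is no in-text argument to compare against; judged on its own terms, your proposal is correct in substance. The unwinding of the duality pairing, the skew-symmetry, the verification of condition (i) via the order-reversal, and the reduction of condition (ii) for $\rho^{\ast}$ to the operator identity
\[
\rho([x_1,x_2,x_3],x_4)+\rho(x_3,x_4)\rho(x_1,x_2)+\rho(x_1,x_4)\rho(x_2,x_3)+\rho(x_2,x_4)\rho(x_3,x_1)=0,
\]
equivalently (after applying condition (ii) to the first term) to the vanishing of your anticommutator sum $S$, are all accurate, and $S=0$ is indeed a consequence of (i) and (ii). Your two-way evaluation does work: carrying it out carefully, the four-term expression evaluates to $S$ on one reading and to $cS$ with $c\neq 1$ on the other (whether the constant comes out as $-2$ or as something else depends on the bookkeeping of which factor of each product you transpose), so $(1-c)S=0$ and $S=0$ over $\CC$. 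There is, however, a shorter and characteristic-free route that avoids any division: in condition (i), expand \emph{both} bracket terms $\rho([x_1,x_2,x_3],x_4)$ and $\rho([x_1,x_2,x_4],x_3)$ on the right-hand side using condition (ii) together with skew-symmetry of $\rho$; their difference collapses to
\[
2\rho(x_1,x_2)\rho(x_3,x_4)+\bigl(\rho(x_2,x_3)\rho(x_1,x_4)+\rho(x_1,x_4)\rho(x_2,x_3)\bigr)+\bigl(\rho(x_3,x_1)\rho(x_2,x_4)+\rho(x_2,x_4)\rho(x_3,x_1)\bigr),
\]
and equating this with the commutator $\rho(x_1,x_2)\rho(x_3,x_4)-\rho(x_3,x_4)\rho(x_1,x_2)$ yields $S=0$ in one step. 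You may wish to record $S=0$ explicitly as a lemma, since it is exactly the statement that the "adjoint-transpose" $-\rho^{T}$ again satisfies (ii).
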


\begin{ex}{\rm Let $A$ be a 3-Lie algebra. The linear map ${\rm ad}:\otimes^2A\rightarrow \frak g\frak l(A)$ with $x_1,x_2\rightarrow {\rm ad}_{x_1,x_2}$ for any $x_1,x_2\in A$
defines a representation $(A, {\rm ad})$ which is called the {\bf adjoint representation} of $A$, where ${\rm ad}_{x_1,x_2}$ is given by Eq.~\eqref{eq:2.2}.
The dual representation $(A^*,{\rm ad}^*)$ of the adjoint representation $(A,{\rm ad})$ of a 3-Lie algebra $A$ is called
the {\bf coadjoint representation}.
}\end{ex}

The classification of complex 3-Lie algebras in dimension 3 and 4 has been known (cf. \cite{BaiR}).

\begin{pro}\label{th:2.3}
There is a unique non-trivial 3-dimensional complex 3-Lie algebra. It has a basis $\{e_1,e_2,e_3\}$ with respect to which the non-zero product is given by
\begin{equation*}
[e_1,e_2,e_3]=e_1.
\end{equation*}
\end{pro}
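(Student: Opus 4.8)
The plan is to reduce the classification to a single linear-algebra invariant. Fix a basis $\{e_1,e_2,e_3\}$ of the $3$-dimensional space $A$. Since the bracket is alternating and trilinear and $\dim A=3$, it is completely determined by the single vector $v:=[e_1,e_2,e_3]$: writing coordinates in this basis one has $[x,y,z]=\det(x,y,z)\,v$ for all $x,y,z\in A$, where $\det(x,y,z)$ is the determinant of the matrix whose columns are the coordinate vectors of $x,y,z$. In particular the bracket takes all its values in the line $\CC v$. Thus specifying a candidate $3$-Lie bracket on $A$ is the same as choosing a vector $v\in A$, and the algebra is non-trivial exactly when $v\neq 0$.

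First I would verify that the Fundamental Identity \eqref{eq:2.1} holds automatically for every $v$, so that each such $v$ genuinely yields a $3$-Lie algebra. By multilinearity it suffices to test it on basis vectors, and by its operator form \eqref{eq:2.3} this is equivalent to each $\ad_{x_1,x_2}$ being a derivation of the bracket. Since every basic bracket other than $[e_1,e_2,e_3]$ and its reorderings vanishes, the derivation condition collapses to the single requirement $\ad_{x_1,x_2}(v)=\mathrm{tr}(\ad_{x_1,x_2})\,v$, using the standard identity $\sum_i\det(e_1,\dots,Te_i,\dots)=\mathrm{tr}(T)\det(e_1,\dots)$. But $\ad_{x_1,x_2}$ is the rank-one operator $x\mapsto\det(x_1,x_2,x)\,v$, for which $\ad_{x_1,x_2}(v)=\det(x_1,x_2,v)\,v=\mathrm{tr}(\ad_{x_1,x_2})\,v$ holds identically; hence no condition on $v$ arises.

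It then remains to show that all non-trivial structures are mutually isomorphic and to normalize them to $[e_1,e_2,e_3]=e_1$. Given $v\neq 0$, I would extend $v$ to a basis $\{u_1,u_2,u_3\}$ with $u_1=v$. Because the bracket is valued in $\CC v$, we get $[u_1,u_2,u_3]=\det(u_1,u_2,u_3)\,v=\lambda u_1$ with $\lambda:=\det(u_1,u_2,u_3)\neq 0$. Rescaling $f_1=u_1$, $f_2=u_2$, $f_3=\lambda^{-1}u_3$ produces a basis with $[f_1,f_2,f_3]=f_1$, which exhibits the isomorphism with the stated model. Combined with the remark that $v=0$ gives the abelian algebra, this yields exactly one non-trivial isomorphism class in dimension $3$.

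The computation is short throughout; the one place that deserves care is the claim that the Fundamental Identity imposes no constraint on $v$, i.e. the rank-one identity $\ad_{x_1,x_2}(v)=\mathrm{tr}(\ad_{x_1,x_2})\,v$. This is the conceptual crux, reflecting that in the top dimension the Fundamental Identity is vacuous. Once this is in place the classification is immediate, and I note that the argument never uses $\CC$ beyond the existence of the nonzero scalar $\lambda^{-1}$, so the same reasoning works over any field.
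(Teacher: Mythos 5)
Your argument is correct and complete. Note that the paper itself offers no proof of this proposition: it is quoted as a known classification result with a citation to \cite{BaiR}, so there is no in-paper argument to compare against. Your route is a clean self-contained verification: the observation that $\dim\wedge^3 A=1$ forces $[x,y,z]=\det(x,y,z)\,v$ reduces everything to the single vector $v$; the Fundamental Identity is then vacuous because the derivation condition collapses to $\ad_{x_1,x_2}(v)=\mathrm{tr}(\ad_{x_1,x_2})\,v$, which holds identically for the rank-one operator $x\mapsto\det(x_1,x_2,x)\,v$ (both sides equal $\det(x_1,x_2,v)\,v$); and the normalization $f_3=\lambda^{-1}u_3$ with $\lambda=\det(u_1,u_2,u_3)\neq 0$ pins down the single non-trivial isomorphism class. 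Each step checks out, including the identity $\sum_i\det(y_1,\dots,Ty_i,\dots,y_3)=\mathrm{tr}(T)\det(y_1,y_2,y_3)$ and the fact that the trace of $x\mapsto\phi(x)v$ is $\phi(v)$. Your closing remark that the argument is field-independent is also accurate and is a small bonus over the complex-field statement the paper imports.
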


\begin{pro}\label{th:2.4}
Let $A$ be a non-trivial 4-dimensional complex 3-Lie algebra. Then $A$ has a basis $\{e_1,e_2,e_3,e_4\}$ with respect to which the non-zero product of the 3-Lie algebra is given by one of the following.
\begin{eqnarray*}
&~&(1)~[e_1,e_2,e_3]=e_4,[e_1,e_2,e_4]=e_3,[e_1,e_3,e_4]=e_2,[e_2,e_3,e_4]=e_1;\\
&~&(2)~[e_1,e_2,e_3]=e_1;\\
&~&(3)~[e_2,e_3,e_4]=e_1;\\
&~&(4)~[e_2,e_3,e_4]=e_1,[e_1,e_3,e_4]=e_2;\\
&~&(5)~[e_2,e_3,e_4]=e_2,[e_1,e_3,e_4]=e_1;\\
&~&(6)~[e_2,e_3,e_4]=\alpha e_1+e_2,~\alpha\neq 0,[e_1,e_3,e_4]=e_2.\\
&~&(7)~[e_1,e_2,e_4]=e_3,[e_1,e_3,e_4]=e_2,[e_2,e_3,e_4]=e_1.
\end{eqnarray*}
\end{pro}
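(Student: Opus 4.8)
The plan is to convert the whole classification into a problem about a single $4\times 4$ matrix. First I would fix the basis $\{e_1,e_2,e_3,e_4\}$ and, using the Levi--Civita symbol $\epsilon_{abcf}$ in four indices, record the skew-symmetric bracket by the unique matrix $M=(M^{df})\in\gl_4(\CC)$ with
\[ [e_a,e_b,e_c]=\sum_{d,f=1}^{4}\epsilon_{abcf}\,M^{df}e_d . \]
This is a bijection between brackets and matrices. A linear change of basis $g\in\GL_4(\CC)$ then carries $M$ to a nonzero scalar multiple of the congruent matrix $gMg^{T}$, the scalar being a power of $\det g$ forced by the transformation of $\epsilon$; hence two brackets are isomorphic precisely when their matrices lie in one orbit of this determinant-twisted congruence. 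The classification thus splits into (i) finding which $M$ satisfy the Fundamental Identity and (ii) listing the orbits of the admissible $M$.

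For step (i) I would rewrite \eqref{eq:2.1} in the operator form \eqref{eq:2.3}, i.e. as the requirement that each $\ad_{e_a,e_b}$ be a derivation of the bracket. In the present encoding these operators are transparent: for $\{a,b,p,q\}=\{1,2,3,4\}$ the map $\ad_{e_a,e_b}$ annihilates $e_a,e_b$ and sends $e_p,e_q$ to signed columns of $M$. Translating the derivation property through the correspondence of the first paragraph, the Fundamental Identity becomes the single quadratic matrix identity
\[ D M + M D^{T}=\Tr(D)\,M,\qquad D=\ad_{e_a,e_b}. \]
Because $\Tr(\ad_{e_a,e_b})$ is proportional to $M^{pq}-M^{qp}$, it vanishes exactly when $M$ is symmetric, and a direct index computation shows that for symmetric $M$ the matrix $\ad_{e_a,e_b}M$ is automatically skew; so every symmetric $M$ satisfies the identity. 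Over $\CC$ symmetric matrices are classified up to congruence by their rank, and after discarding rank $0$ (the abelian algebra) the ranks $1,2,3,4$ give exactly the cases (3), (4), (7) and (1).

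The substantive work is the non-symmetric case. Writing $M=S+K$ with $S$ symmetric and $K$ skew and $K\neq 0$, and separating the identity into its symmetric and skew parts, one obtains the coupled system
\[ DS+SD^{T}=\Tr(D)\,S,\qquad DK+KD^{T}=\Tr(D)\,K, \]
in which $D$ depends on both $S$ and $K$ while $\Tr(D)$ depends only on $K$. Classifying the admissible pairs $(S,K)$ up to simultaneous determinant-twisted congruence is exactly the problem of classifying the matrix pencil $\lambda S+\mu K$, controlled by its Kronecker--Weierstrass invariants. This is where I expect the main obstacle to lie: the analysis branches according to $\rank K\in\{2,4\}$ and the relative position of $S$ and $K$, and a genuine continuous modulus can survive congruence. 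It is precisely this pencil modulus that produces the one-parameter family (6)---the parameter $\alpha$ being an eigenvalue-type invariant that congruence cannot normalize away---while the more rigid pencils yield the isolated algebras (2) and (5).

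To finish I would, for each admissible Kronecker type, select a congruence putting $M$ into a canonical shape, read the corresponding bracket relations back off the formula of the first paragraph, and check irredundancy using the congruence invariants $\rank M$, $\rank K$ and the pencil data. Combined with the symmetric cases this returns the complete list of seven. The only delicate bookkeeping is the $\det$-twist in the action, which rescales $M$ and must be tracked when deciding, within family (6), which values of $\alpha$ give isomorphic algebras.
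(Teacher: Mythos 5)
The paper does not actually prove Proposition~\ref{th:2.4}: it quotes the classification from \cite{BaiR}, so there is no in-paper argument to compare against. Your framework is the standard one for $(n+1)$-dimensional $n$-Lie algebras going back to Filippov and used in \cite{BaiR}: the encoding $[e_a,e_b,e_c]=\sum_{d,f}\epsilon_{abcf}M^{df}e_d$, the $\det$-twisted congruence action, and the reformulation of the Fundamental Identity as $DM+MD^{T}=\Tr(D)M$ with $D=\ad_{e_a,e_b}$ are all correct (I checked the sign conventions, the identity $\Tr(\ad_{e_a,e_b})=\epsilon_{abpq}(M^{pq}-M^{qp})$ for the complementary pair $\{p,q\}$, and the skewness of $DM$ for symmetric $M$), and the symmetric case does produce exactly cases (3), (4), (7), (1) according to rank.

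The non-symmetric case, however, is a genuine gap, and it is where three of the seven families live, including the only one with a continuous parameter. You reduce to the coupled system $DS+SD^{T}=\Tr(D)S$ and $DK+KD^{T}=\Tr(D)K$ and then assert that classifying the admissible pairs ``is exactly'' the Kronecker--Weierstrass classification of the pencil $\lambda S+\mu K$; but pencil normal forms only organize the case analysis --- they do not decide which pencils satisfy the quadratic constraints, since $D$ is itself built from $S+K$. What is missing is precisely the content of the theorem: showing that $\rank K=4$ admits no solutions, that for $\rank K=2$ the constraints force $S$ and $K$ into a common $2\times 2$ block, that the resulting normal forms are exactly (2), (5), (6), and that the eigenvalue-type parameter $\alpha$ genuinely survives the $\det$-twisted congruence. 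You flag this yourself as ``where I expect the main obstacle to lie,'' which is an accurate self-assessment: as written, the proposal is a correct and well-chosen setup for the computation carried out in \cite{BaiR}, but not a proof.
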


\subsection{Local cocycle 3-Lie bialgebras and the 3-Lie classical Yang-Baxter equation}
Most of the facts in this subsection and next subsection can be found in \cite{C.Bai}.

\begin{defi}\label{th:2.5}                                                                    {\rm Let $A$ be a 3-Lie algebra and $(V,\rho)$ be a representation of $A$. A linear map $f:A\rightarrow V$ is called a {\bf 1-cocycle} of $A$ associated to $(V,\rho)$ if it satisfies}
\begin{equation*}
f([x_1,x_2,x_3])=\rho(x_1,x_2)f(x_3)+\rho(x_2,x_3)f(x_1)+\rho(x_3,x_1)f(x_2),\;\;\forall x_1,x_2,x_3\in A.
\end{equation*}
\end{defi}
\begin{defi}   {\rm                                                                    A {\bf local cocycle 3-Lie bialgebra} is a pair $(A,\Delta)$, where $A$ is a 3-Lie algebra, and $\Delta=\Delta_1+\Delta_2+\Delta_3:A\rightarrow A\otimes A\otimes A$ is a linear map, such that $\Delta^{\ast}:A^{\ast}\otimes A^{\ast}\otimes A^{\ast}\rightarrow A^{\ast}$ defines a 3-Lie algebra structure on $A^{\ast}$, and the following conditions are satisfied:
\begin{eqnarray*}
&(1)~\Delta_1~\text{is a 1-cocycle associated to the representation }(A\otimes A\otimes A, \emph{\rm ad}\otimes \id \otimes \id );\\
&(2)~\Delta_2~\text{is a 1-cocycle associated to the representation }(A\otimes A\otimes A, \id\otimes \emph{\rm ad}\otimes \id );\\
&(3)~\Delta_3~\text{is a 1-cocycle associated to the representation }(A\otimes A\otimes A, \id\otimes \id \otimes \emph{\rm ad}).
\end{eqnarray*}}
\end{defi}

In order to define the 3-Lie classical Yang-Baxter equation, we first give some necessary notations.
Let $A$ be a 3-Lie algebra and $r=\sum_i x_i\otimes y_i\in A\otimes A$. For any $1\leq p\neq q\leq 4$, define an inclusion $\cdot_{pq}:\otimes^2A\longrightarrow \otimes^4 A$ by
$$
r_{pq}:=\sum_i z_{i1}\otimes\cdots\otimes z_{in},\quad \text{ where } z_{ij}=\left\{\begin{array}{ll} x_i,& j=p,\\ y_i, & j=q, \\ 1, & i\neq p, q,\end{array} \right.
$$
where 1 is a symbol playing a similar role of the unit. Hence define $[[r,r,r]]\in \otimes^4 A$ by
\begin{eqnarray}
\label{eq:rrr}[[r,r,r]]&:=&[r_{12},r_{13},r_{14}]+[r_{12},r_{23},r_{24}]+[r_{13},r_{23},r_{34}]+[r_{14},r_{24},r_{34}]\\
\nonumber&=&\sum_{i,j,k}\big([x_i,x_j,x_k]\otimes y_i\otimes y_j\otimes y_k+x_i\otimes [y_i,x_j,x_k]\otimes y_j\otimes y_k\\
\nonumber&&+ x_i\otimes x_j\otimes [y_i, y_j,x_k]\otimes y_k+ x_i\otimes x_j\otimes x_k\otimes [y_i,y_j,y_k]\big).
\end{eqnarray}

\begin{defi}{\rm
Let $A$ be a $3$-Lie algebra and $r\in A\otimes A$. The equation
$$[[r,r,r]]=0$$
is called the {\bf $3$-Lie classical Yang-Baxter equation (3-Lie CYBE)}.
\mlabel{defi:3cybe}}
\end{defi}

\begin{lem}
Let $A$ be a 3-Lie algebra and $r=\sum_i x_i\otimes y_i\in A\otimes A$. Set
\begin{equation}\label{eq:delta123}\left\{\begin{array}{ccc}
\Delta_1(x)&:=&\sum_{i,j} [x,x_i,x_j]\otimes y_j\otimes y_i;\\
\Delta_2(x)&:=&\sum_{i,j} y_i\otimes [x,x_i,x_j]\otimes y_j;\\
\Delta_3(x)&:=&\sum_{i,j} y_j\otimes y_i\otimes [x,x_i,x_j],
\end{array}\right.
\end{equation}
where $x\in A$. Then
\begin{enumerate}
\item[\rm (1)] $\Delta_1$ is a $1$-cocycle associated to the representation $(A\otimes A\otimes A, {\rm ad}\otimes \id \otimes \id )$;
\item[\rm (2)] $\Delta_2$ is a $1$-cocycle associated to the representation $(A\otimes A\otimes A,\id\otimes {\rm ad}\otimes \id )$;
\item[\rm (3)] $\Delta_3$ is a $1$-cocycle associated to the representation $(A\otimes A\otimes A,\id\otimes \id \otimes {\rm ad})$.
\end{enumerate}
Moreover,
$\Delta^*:A^*\otimes A^*\otimes A^*\rightarrow A^*$ defines a
skew-symmetric operation, where $\Delta=\Delta_1+\Delta_2+\Delta_3$.
\end{lem}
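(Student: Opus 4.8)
The plan is to reduce all three cocycle assertions to a single algebraic identity that follows from the Fundamental Identity, and then to read off the skew-symmetry of $\Delta$ by an index-bookkeeping argument; the representations $\ad\otimes\id\otimes\id$, etc., are already available from the definitions, so only the cocycle equation and the skew-symmetry need to be checked.

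The main technical step will be to establish the auxiliary identity
\[
[[a,b,c],u,v]=[a,b,[c,u,v]]+[b,c,[a,u,v]]+[c,a,[b,u,v]]
\]
for all $a,b,c,u,v\in A$. To prove it I would rewrite the left-hand side using the total skew-symmetry of the $3$-bracket as $[u,v,[a,b,c]]=\ad_{u,v}[a,b,c]$, apply the Fundamental Identity in the operator form \eqref{eq:2.3} (namely that $\ad_{u,v}$ is a derivation of the bracket) to get $[[u,v,a],b,c]+[a,[u,v,b],c]+[a,b,[u,v,c]]$, and then restore each summand to the stated shape by cyclic permutation of its three arguments. Everything after this is formal.

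For part (1) I would then apply the auxiliary identity with $(a,b,c)=(x_1,x_2,x_3)$ and $(u,v)=(x_i,x_j)$, tensor the resulting equation on the right by $y_j\otimes y_i$, and sum over $i,j$. The left-hand side becomes $\Delta_1([x_1,x_2,x_3])$, while the three terms on the right become exactly $(\ad_{x_1,x_2}\otimes\id\otimes\id)\Delta_1(x_3)$, $(\ad_{x_2,x_3}\otimes\id\otimes\id)\Delta_1(x_1)$, and $(\ad_{x_3,x_1}\otimes\id\otimes\id)\Delta_1(x_2)$, which is precisely the $1$-cocycle condition of Definition~\ref{th:2.5} for $(A\otimes A\otimes A,\ad\otimes\id\otimes\id)$. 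Parts (2) and (3) are handled identically: the only change is that the two inert factors $y_i,y_j$ are positioned so that the bracket occupies the second (resp.\ third) tensor slot, matching the action $\id\otimes\ad\otimes\id$ (resp.\ $\id\otimes\id\otimes\ad$), and the same identity applies verbatim.

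For the last assertion it suffices to show $\Delta(x)\in\wedge^3A$ for every $x$, since the dual of a totally skew-symmetric element pairs to a skew-symmetric ternary operation: if $\sigma\in S_3$ permutes the tensor factors then $\langle\sigma\cdot(\alpha\otimes\beta\otimes\gamma),\Delta(x)\rangle=\operatorname{sgn}(\sigma)\langle\alpha\otimes\beta\otimes\gamma,\Delta(x)\rangle$. Writing $b_{ij}:=[x,x_i,x_j]$, so that $b_{ij}=-b_{ji}$, we have
\[
\Delta(x)=\sum_{i,j}\big(b_{ij}\otimes y_j\otimes y_i+y_i\otimes b_{ij}\otimes y_j+y_j\otimes y_i\otimes b_{ij}\big).
\]
I would apply the transposition of the first two tensor factors, relabel the summation indices $i\leftrightarrow j$, and use $b_{ij}=-b_{ji}$ to recover $-\Delta(x)$; the transposition of the last two factors is treated the same way. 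Since these two transpositions generate $S_3$, this shows $\Delta(x)$ is totally skew-symmetric, hence $\Delta^{\ast}$ is a skew-symmetric operation on $A^{\ast}$. The only point demanding care is keeping the positions of $y_i$ and $y_j$ straight under the relabeling.
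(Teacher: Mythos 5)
Your proof is correct. Note that the paper itself gives no proof of this lemma --- it is recalled verbatim from \cite{C.Bai} --- so there is no in-paper argument to compare against; but your auxiliary identity is a valid even-permutation rewriting of the Fundamental Identity \eqref{eq:2.3}, it does yield all three cocycle conditions by the tensoring argument you describe, and the $i\leftrightarrow j$ relabelling under the two generating transpositions correctly shows $\Delta(x)\in\wedge^3 A$ (for arbitrary $r$, not only skew-symmetric $r$), which gives the skew-symmetry of $\Delta^{\ast}$.
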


As is well-known, a skew-symmetric solution of the CYBE in a Lie algebra gives a Lie bialgebra. As its 3-Lie algebra
analogue, we have

\begin{thm}
Let $A$ be a $3$-Lie algebra and let $r\in A\otimes A$ be a skew-symmetric solution of the 3-Lie CYBE:
$$[[r,r,r]]=0.$$
Define  $\Delta:=\Delta_1+\Delta_2+\Delta_3:
A\rightarrow A\otimes A\otimes A$, where
$\Delta_1, \Delta_2, \Delta_3$ are induced by $r$ as in Eq.~\eqref{eq:delta123}.
Then $\Delta^*$ defines a $3$-Lie algebra structure on $A^*$.   Furthermore, $(A,\Delta)$ is a local cocycle 3-Lie bialgebra.
\label{thm:ybe}
\end{thm}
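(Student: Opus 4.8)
The plan is to establish the two assertions in turn, observing that the preceding Lemma already supplies almost everything. First I would invoke that Lemma directly: it shows that $\Delta_1,\Delta_2,\Delta_3$ are the three $1$-cocycles required in the definition of a \typeI{}, associated to $(A\otimes A\otimes A,\,{\rm ad}\otimes\id\otimes\id)$, $(A\otimes A\otimes A,\,\id\otimes{\rm ad}\otimes\id)$ and $(A\otimes A\otimes A,\,\id\otimes\id\otimes{\rm ad})$ respectively, and that $\Delta^{\ast}$ is a skew-symmetric operation on $A^{\ast}$. Consequently Conditions (1)--(3) of the definition hold automatically and $\Delta^{\ast}$ is skew-symmetric, so the entire content of the theorem collapses to the single claim that $\Delta^{\ast}$ satisfies the Fundamental Identity \eqref{eq:2.1}; once $(A^{\ast},\Delta^{\ast})$ is known to be a genuine $3$-Lie algebra, $(A,\Delta)$ is by definition a \typeI{}.

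To prove the Fundamental Identity for the bracket $[\cdot,\cdot,\cdot]_{A^{\ast}}:=\Delta^{\ast}$, I would pass to the dual statement on $A$. Writing \eqref{eq:2.1} for five covectors $\alpha_1,\dots,\alpha_5\in A^{\ast}$ and pairing against a single $x\in A$, each nested bracket becomes a twofold comultiplication; the identity therefore becomes an equality of linear maps $A\to\otimes^5 A$. The left-hand side corresponds to $(\id\otimes\id\otimes\Delta)\circ\Delta$, while the three right-hand terms correspond to $(\Delta\otimes\id\otimes\id)\circ\Delta$, $(\id\otimes\Delta\otimes\id)\circ\Delta$ and $(\id\otimes\id\otimes\Delta)\circ\Delta$, each precomposed with the permutation of the five tensor factors forced by the placement of $\alpha_1,\dots,\alpha_5$ in \eqref{eq:2.1}. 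I would record this ``co-Fundamental Identity'' once as the single relation to be verified.

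Next I would substitute the explicit formulas of Eq.~\eqref{eq:delta123} into the co-Fundamental Identity and expand everything in terms of $r=\sum_i x_i\otimes y_i$ and the bracket of $A$. Each composite $\Delta\circ\Delta$ produces a double sum of iterated brackets such as $[[x,x_i,x_j],x_k,x_l]$ tensored with monomials in the $y$'s; at this stage I would apply the Fundamental Identity \eqref{eq:2.1} of $A$ itself, together with the skew-symmetry of $r$, to reorganize these iterated brackets into the four bracketings $[x_i,x_j,x_k]$, $[y_i,x_j,x_k]$, $[y_i,y_j,x_k]$ and $[y_i,y_j,y_k]$ that appear in \eqref{eq:rrr}. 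The heart of the proof is the claim that after this reorganization the difference between the two sides is governed entirely by the element $[[r,r,r]]\in\otimes^4 A$: every surviving term arises from $[[r,r,r]]$ by letting the free variable $x$ act through the bracket of $A$ and distributing the result among the tensor slots. Since $r$ solves the $3$-Lie CYBE, $[[r,r,r]]=0$, so the difference vanishes and the co-Fundamental Identity holds.

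The main obstacle is precisely the bookkeeping of this last step. One must expand the four summands on each side, apply \eqref{eq:2.1} in exactly the right slots to manufacture the four bracketings of \eqref{eq:rrr}, and check that the surviving terms assemble into copies of $[[r,r,r]]$ rather than into some spurious remainder; keeping track of which permutation of tensor factors each term of the Fundamental Identity imposes is where the computation is most delicate. The skew-symmetry of $r$ is what makes the three ``right-hand'' contributions line up against the single ``left-hand'' one, and the Fundamental Identity of $A$ is what lets the mixed iterated brackets collapse. Beyond this tensorial accounting I expect no conceptual difficulty, since the cocycle and skew-symmetry properties are already guaranteed by the Lemma.
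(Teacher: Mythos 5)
You have correctly isolated what remains to be proved: the preceding Lemma already delivers the three $1$-cocycle conditions and the skew-symmetry of $\Delta^{\ast}$, so everything reduces to verifying the Fundamental Identity \eqref{eq:2.1} for $\Delta^{\ast}$, and your dualization of that identity into a ``co-Fundamental Identity'' on $A$ (with the correct permutations of the five tensor slots) is the right reformulation. Note, however, that the paper itself offers no proof of this theorem --- it is quoted from \cite{C.Bai}, where precisely the computation you describe is carried out --- so the only question is whether your proposal would stand on its own.

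As written it does not, because the single claim that carries all the mathematical weight is asserted rather than derived: namely, that after substituting \eqref{eq:delta123} and applying \eqref{eq:2.1} in $A$, the defect of the co-Fundamental Identity is exactly the image of $[[r,r,r]]$ under operators built from $\mathrm{ad}_x$ and slot permutations. This is not a routine bookkeeping step that can be deferred; it \emph{is} the theorem. Two specific dangers: first, in the Lie-algebra analogue the obstruction to co-Jacobi is not $[[r,r]]$ itself but its $\mathrm{ad}$-invariance, i.e.\ $(\mathrm{ad}_x\otimes\id\otimes\id+\id\otimes\mathrm{ad}_x\otimes\id+\id\otimes\id\otimes\mathrm{ad}_x)[[r,r]]$, so in the $3$-Lie case you must actually exhibit the defect as such an expression applied to $[[r,r,r]]$ (which still vanishes under the hypothesis, but the identity has to be produced, not presumed); second, since $\Delta$ has three summands and each side of the co-Fundamental Identity involves a composite $\Delta\circ\Delta$, the expansion has $9\times 4=36$ families of terms per composite, and showing that the cross terms cancel against one another (using skew-symmetry of $r$ and the Fundamental Identity of $A$) rather than leaving a remainder not expressible through $[[r,r,r]]$ is exactly where a sign or slot error would sink the argument. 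To turn the proposal into a proof you would need to write down at least the key intermediate identity expressing the two sides' difference in closed form; absent that, this is a correct plan with the decisive step missing.
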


\subsection{Double construction 3-Lie bialgebras}
We end this preparational section with recalling the notion of a double construction 3-Lie bialgebra and its related Minin triple.

\begin{defi}\label{doulbe}
{\rm Let $A$ be a 3-Lie algebra and $\Delta:A\rightarrow A\otimes A\otimes A$ a linear map. Suppose that $\Delta^{\ast}:A^{\ast}\otimes A^{\ast}\otimes A^{\ast}\to A^{\ast}$ defines a 3-Lie algebra structure on $A^{\ast}$. If for all $x,y,z\in A$, $\Delta$ satisfies the following conditions,
\begin{equation}\label{eq:2.6}
\Delta([x,y,z])=(\id\otimes \id \otimes \emph{\rm ad}_{y,z})\Delta(x)+(\id\otimes \id \otimes \emph{\rm ad}_{z,x})\Delta(y)+(\id\otimes \id \otimes \emph{\rm ad}_{x,y})\Delta(z),
\end{equation}
\begin{equation}\label{eq:2.7}
\Delta([x,y,z])=(\id\otimes \id \otimes \emph{\rm ad}_{y,z})\Delta(x)+(\id\otimes \emph{\rm ad}_{y,z}\otimes \id )\Delta(x)+(\emph{\rm ad}_{y,z}\otimes \id \otimes \id )\Delta(x),
\end{equation}
then we call $(A,\Delta)$ a {\bf  double construction 3-Lie bialgebra.}}
\end{defi}

\begin{defi}
{\rm
Let $A$ be a $3$-Lie algebra. A bilinear form $(\cdot,\cdot)_A$  on $A$ is called {\bf invariant} if it satisfies
\begin{equation}
 ([x_1,x_2,x_3],x_4)_A+([x_1,x_2,x_4],x_3)_A=0,\quad\forall  x_1,x_2,x_3,x_4\in A.
\end{equation}
A $3$-Lie algebra $A$ is called {\bf pseudo-metric} if there is a nondegenerate symmetric invariant bilinear form on $A$.
}
\end{defi}

\begin{defi}\label{defi:Manin}
{\rm
A {\bf Manin triple of $3$-Lie algebras} consists of a pseudo-metric $3$-Lie algebra $(\mathcal{A},(\cdot,\cdot)_\mathcal{A})$ and $3$-Lie algebras $A_1, A_2$   such that
\begin{enumerate}
\item[\rm (1)] $A_1,A_2$ are isotropic $3$-Lie subalgebras of $\mathcal{A}$;
\item[\rm (2)] $\mathcal{A}=A_1\oplus A_2$ as the direct sum of vector spaces;
\item[\rm (3)] For all $x_1,y_1\in A_1 $ and $x_2,y_2\in A_2$, we have $\mathrm{pr}_1[x_1,y_1,x_2]=0$ and $\mathrm{pr}_2[x_2,y_2,x_1]=0$, where $\mathrm{pr}_1$ and $\mathrm{pr}_2$ are the projections from $A_1\oplus A_2$ to $A_1$ and $A_2$ respectively.
\end{enumerate}
}
\end{defi}

Let $(A,[\cdot,\cdot,\cdot])$ and $(A^{\ast},[\cdot,\cdot,\cdot]^{\ast})$ be  3-Lie algebras. On $A\oplus A^{\ast}$, there is a natural nondegenerate symmetric bilinear form $(\cdot,\cdot)_+$  given by
\begin{equation} \label{eq:bf}
( x+\xi, y+\eta)_+=\langle x, \eta\rangle+\langle \xi,y\rangle,\;\;\forall  x,y\in A, \xi,\eta\in A^{\ast}.
\end{equation}
There is also a bracket operation
$[\cdot,\cdot,\cdot]_{A\oplus A^{\ast}}$ on $A\oplus A^{\ast}$ given by
\begin{eqnarray}
 \nonumber [x+\xi,y+\eta,z+\gamma]_{A\oplus A^{\ast}}&=&[x,y,z]+\mathrm{ad}_{x,y}^{\ast}\gamma+\mathrm{ad}_{y,z}^{\ast}\xi+\mathrm{ad}_{z,x}^{\ast}\eta\\
\label{eq:formularAA*}  &&+\mathfrak{ad}_{\xi,\eta}^{\ast}z+\mathfrak{ad}_{\eta,\gamma}^{\ast}x+\mathfrak{ad}_{\gamma,\xi}^{\ast}y+[\xi,\eta,\gamma]^{\ast},
\end{eqnarray}
where $\mathrm{ad}^{\ast}$ and $\mathfrak{ad}^{\ast}$ are the coadjoint representations of $A$ and $A^{\ast}$ on $A^{\ast}$ and $A$ respectively. Note that the bracket operation $[\cdot,\cdot,\cdot]_{A\oplus A^{\ast}}$ is naturally invariant with respect to the symmetric bilinear form $(\cdot,\cdot)_+$, and satisfies Condition~(3) in Definition~\ref{defi:Manin}. If $(A\oplus A^{\ast},[\cdot,\cdot,\cdot]_{A\oplus A^{\ast}})$ is a 3-Lie algebra, then obviously $A$ and $A^{\ast}$ are isotropic subalgebras. Consequently, $((A\oplus A^{\ast},(\cdot,\cdot)_+),A, A^{\ast})$ is a Manin triple, which is called {\bf the standard Manin triple of 3-Lie algebras.}

\begin{thm}\label{thm:relations}
Let $A$ be a $3$-Lie algebra and $\Delta:A\rightarrow A\otimes A\otimes A$  a linear map. Suppose that $\Delta^*:A^*\otimes A^*\otimes A^*\rightarrow A^*$
defines a $3$-Lie algebra structure on $A^*$. Then  $(A,\Delta)$ is a double construction 3-Lie bialgebra if and only if
$((A\oplus A^*,(\cdot,\cdot)_+),A,A^*)$ is a standard Manin triple, where the bilinear form $(\cdot,\cdot)_+$ and the $3$-Lie bracket $[\cdot,\cdot,\cdot]_{A\oplus A^*}$ are given by Eqs.~\eqref{eq:bf} and \eqref{eq:formularAA*} respectively.
\end{thm}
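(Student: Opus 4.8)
The plan is to reduce the stated equivalence to a single assertion about the Fundamental Identity on the double space. Indeed, the discussion preceding the theorem already supplies almost all of the data required for a standard Manin triple in the sense of Definition~\ref{defi:Manin}: the form $(\cdot,\cdot)_+$ of \eqref{eq:bf} is nondegenerate and symmetric, the bracket $[\cdot,\cdot,\cdot]_{A\oplus A^*}$ of \eqref{eq:formularAA*} is invariant with respect to it and satisfies Condition~(3), and the formula \eqref{eq:formularAA*} restricts to the given brackets on $A$ and on $A^*$, so that $A$ and $A^*$ are subalgebras and are manifestly isotropic. Consequently the only nontrivial content is that $(A\oplus A^*, [\cdot,\cdot,\cdot]_{A\oplus A^*})$ is a $3$-Lie algebra, i.e. that the bracket \eqref{eq:formularAA*} obeys the Fundamental Identity \eqref{eq:2.1}; the whole theorem then becomes the single equivalence that \eqref{eq:formularAA*} satisfies \eqref{eq:2.1} if and only if $\Delta$ satisfies \eqref{eq:2.6} and \eqref{eq:2.7}.

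To prove this equivalence I would verify the Fundamental Identity on homogeneous arguments, each lying in $A$ or in $A^*$. Since \eqref{eq:formularAA*} is multilinear and skew-symmetric, it suffices to check \eqref{eq:2.1} on $5$-tuples $(x_1,\dots,x_5)$ classified by the number of entries taken from $A$ as opposed to $A^*$, giving the types $(5,0),(4,1),(3,2),(2,3),(1,4),(0,5)$. The construction \eqref{eq:formularAA*} is symmetric under exchanging $A\leftrightarrow A^*$ together with $\mathrm{ad}^*\leftrightarrow\mathfrak{ad}^*$ and replacing $\Delta$ by the cobracket on $A^*$ dual to $[\cdot,\cdot,\cdot]$; this symmetry identifies type $(k,5-k)$ with type $(5-k,k)$, so it is enough to treat $(5,0)$, $(4,1)$ and $(3,2)$.

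The extreme case $(5,0)$ (and, by symmetry, $(0,5)$) is immediate: on arguments from $A$ the bracket \eqref{eq:formularAA*} reduces to $[\cdot,\cdot,\cdot]$, so \eqref{eq:2.1} holds because $A$ is a $3$-Lie algebra (respectively because $\Delta^*$ makes $A^*$ a $3$-Lie algebra). The substance lies in the mixed types $(4,1)$ and $(3,2)$. Here I would expand both sides of \eqref{eq:2.1} through \eqref{eq:formularAA*}, then separately collect the resulting $A$-component and $A^*$-component of each equation. Using \eqref{eq:dual} to pass between $\mathrm{ad}$ and $\mathrm{ad}^*$ (and between $[\cdot,\cdot,\cdot]^*$ and $\Delta$), together with the two representation axioms of Definition~\ref{th:2.2} applied to the coadjoint representations $\mathrm{ad}^*$ and $\mathfrak{ad}^*$, the vanishing of these mixed components should reorganize exactly into \eqref{eq:2.6} and \eqref{eq:2.7} (and equivalent transposed forms). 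Reading the same computation in the reverse direction yields the converse implication, so that the compatibility conditions are not merely necessary but also sufficient for \eqref{eq:2.1}.

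The main obstacle is entirely computational: the fivefold bracket on $A\oplus A^*$ generates a large number of terms distributed between $A$ and $A^*$, and the delicate point is to organize the expansion so that these terms assemble cleanly into \eqref{eq:2.6} and \eqref{eq:2.7} rather than into some larger, redundant system. I do not expect any conceptual difficulty---the representation axioms for $\mathrm{ad}^*$ and $\mathfrak{ad}^*$ and the pairing identity \eqref{eq:dual} are the only tools needed---so the real work is the careful bookkeeping of the mixed-type cases and the verification that the conditions they produce are precisely \eqref{eq:2.6} and \eqref{eq:2.7}.
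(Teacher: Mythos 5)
The paper itself does not prove this theorem: it is recalled from \cite{C.Bai} as background, so there is no in-paper argument to compare against. Your reduction is nonetheless the right one and is the standard route. Since the text preceding the theorem already records that $(\cdot,\cdot)_+$ is nondegenerate, symmetric and invariant for \eqref{eq:formularAA*}, that Condition (3) of Definition~\ref{defi:Manin} holds, and that $A$ and $A^*$ are isotropic subalgebras whenever \eqref{eq:formularAA*} is a $3$-Lie bracket, the theorem does collapse to the single equivalence ``\eqref{eq:formularAA*} satisfies the Fundamental Identity \eqref{eq:2.1} if and only if $\Delta$ satisfies \eqref{eq:2.6} and \eqref{eq:2.7}'', and sorting the five arguments by how many lie in $A$ versus $A^*$ is how one verifies it.

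Two things, however, keep your text from being a proof. First, the entire mathematical content---that the type-$(4,1)$ and type-$(3,2)$ instances of \eqref{eq:2.1}, after separating $A$- and $A^*$-components and applying \eqref{eq:dual}, are equivalent to exactly \eqref{eq:2.6} and \eqref{eq:2.7} and yield no further independent constraints---is asserted (``should reorganize exactly into'') rather than carried out; this is precisely the computation the theorem consists of. Second, the symmetry reduction is not free. Exchanging $A\leftrightarrow A^*$ turns the type-$(1,4)$ and $(2,3)$ instances into conditions \eqref{eq:2.6}--\eqref{eq:2.7} for the pair $(A^*,\mu)$, where $\mu:A^*\to A^*\otimes A^*\otimes A^*$ is the map dual to $[\cdot,\cdot,\cdot]_A$, and \emph{not} into \eqref{eq:2.6}--\eqref{eq:2.7} for $(A,\Delta)$. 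So to get the forward implication you still must prove that \eqref{eq:2.6}--\eqref{eq:2.7} for $(A,\Delta)$ imply their counterparts for $(A^*,\mu)$. That self-duality of the double construction notion is true, but it is a genuine dualization argument in its own right, and it is usually deduced \emph{from} the Manin triple theorem, so invoking it here without an independent proof risks circularity. Either treat types $(1,4)$ and $(2,3)$ by direct expansion and show via \eqref{eq:dual} that the resulting identities are transposes of the ones already obtained, or use the invariance and nondegeneracy of $(\cdot,\cdot)_+$ to pair the defect of \eqref{eq:2.1} against a sixth homogeneous element and move brackets across the pairing, which converts components of the defect on one type into components on an adjacent type and cuts the independent cases down to the ones you propose to compute.
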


\section{Skew-symmetric solutions of the 3-Lie CYBE and local cocycle 3-Lie bialgebras}
\label{sec:cybe}

In this section, we obtain a computable formula of the 3-Lie CYBE
and apply it to obtain all skew-symmetric solutions of the 3-Lie
CYBE in the complex 3-Lie algebras in dimensions 3 and 4. We then
obtain the local cocycle 3-Lie bialgebras induced from these
solutions.

\subsection{Notational simplification of the 3-Lie CYBE}

Let $\{e_1,\cdots,e_n\}$ be a basis of $A$. Set
\begin{equation}\label{eq:3.2}
    r=\sum_{i,j}a^{ij}e_i\otimes e_j=\sum_ie_i \otimes  (\sum_ja^{ij} e_j) \in A\otimes A. ~~
\end{equation}
Then
\begin{eqnarray*}
        [r_{12},r_{13},r_{14}]&=&\sum_{i,j,k,p,q,r}a^{ip}a^{jq}a^{kr}[e_i,e_j,e_k]\otimes e_p\otimes e_q\otimes e_r\\
    &=&\sum_{p,q,r}\big(\sum_{i,j,k}a^{ip}a^{jq}a^{kr}[e_i,e_j,e_k]\otimes e_p\otimes e_q\otimes e_r\big).
\end{eqnarray*}
If any two of $\{i,j,k\}$ are equal, then $[e_i,e_j,e_k]=0$. So we
can assume that $\{i,j,k\}$ are distinct in the sum.

Let $S_3$ denote the symmetric group
of order 3. In the following, we let $\sigma\in S_3$ act on $\{i,j,k\}$ by permuting the three locations. So denoting $(i_1,i_2,i_3):=(i,j,k)$, we define $\{\sigma(i),\sigma(j),\sigma(k)\} =\{i_{\sigma(1)},i_{\sigma(2)},i_{\sigma(3)}\}$. This applies even if $i,j,k$ are not distinct. Then for fixed $\{p,q,r\}$,  we have
    \begin{eqnarray*}
&&\sum_{i,j,k}a^{ip}a^{jq}a^{kr}[e_i,e_j,e_k]\otimes e_p\otimes
e_q\otimes e_r\\
&&=\sum_{i<j<k}\sum_{\sigma\in S_3}a^{\sigma(i)p}a^{\sigma(j)q}a^{\sigma(k)r}[e_{\sigma(i)},e_{\sigma(j)},e_{\sigma(k)}]\otimes e_p\otimes e_q\otimes e_r\\
        &&=\sum_{i<j<k}\big(\sum_{\sigma\in S_3}\mathrm{sgn}(\sigma)a^{\sigma(i)p}a^{\sigma(j)q}a^{\sigma(k)r}\big)[e_i,e_j,e_k]\otimes e_p\otimes e_q\otimes
        e_r.
    \end{eqnarray*}
Therefore $[r_{12},r_{13},r_{14}]$ is rewritten as
    \begin{equation*}
        [r_{12},r_{13},r_{14}]~=~\sum_{p,q,r}\sum_{i<j<k}(\sum_{\sigma\in S_3}\mathrm{sgn}(\sigma)a^{\sigma(i)p}a^{\sigma(j)q}a^{\sigma(k)r})[e_i,e_j,e_k]\otimes e_p\otimes e_q\otimes e_r.
    \end{equation*}
Similarly, we have
\begin{eqnarray*}
        ~[r_{12},r_{23},r_{24}]
    &=&\sum_{p,q,r}\sum_{i<j<k}(\sum_{\sigma\in S_3}\mathrm{sgn}(\sigma)a^{p\sigma(i)}a^{\sigma(j)q}a^{\sigma(k)r})e_p\otimes[e_i,e_j,e_k]\otimes e_q\otimes e_r,\\
        ~[r_{13},r_{23},r_{34}]
    &=&\sum_{p,q,r}\sum_{i<j<k}(\sum_{\sigma\in S_3}\mathrm{sgn}(\sigma)a^{p\sigma(i)}a^{q\sigma(j)}a^{\sigma(k)r})e_p\otimes e_q\otimes[e_i,e_j,e_k]\otimes e_r,\\
        ~[r_{14},r_{24},r_{34}]
    &=&\sum_{p,q,r}\sum_{i<j<k}(\sum_{\sigma\in S_3}\mathrm{sgn}(\sigma)a^{p\sigma(i)}a^{q\sigma(j)}a^{r\sigma(k)})e_p\otimes e_q\otimes e_r\otimes[e_i,e_j,e_k].\\
\end{eqnarray*}
For all $i< j< k, p,q,r$, set
\begin{align}
    M_{pqr}^{ijk}(1)&=(\sum_{\sigma\in S_3}\mathrm{sgn}(\sigma)a^{\sigma(i)p}a^{\sigma(j)q}a^{\sigma(k)r})[e_i,e_j,e_k]\otimes e_p\otimes e_q\otimes e_r, \label{eq:m1}\\
    M_{pqr}^{ijk}(2)&=(\sum_{\sigma\in S_3}\mathrm{sgn}(\sigma)a^{p\sigma(i)}a^{\sigma(j)q}a^{\sigma(k)r})e_p\otimes[e_i,e_j,e_k]\otimes e_q\otimes e_r,\label{eq:m2}\\
    M_{pqr}^{ijk}(3)&=(\sum_{\sigma\in S_3}\mathrm{sgn}(\sigma)a^{p\sigma(i)}a^{q\sigma(j)}a^{\sigma(k)r})e_p\otimes e_q\otimes[e_i,e_j,e_k]\otimes e_r,\label{eq:m3}\\
    M_{pqr}^{ijk}(4)&=(\sum_{\sigma\in S_3}\mathrm{sgn}(\sigma)a^{p\sigma(i)}a^{q\sigma(j)}a^{r\sigma(k)})e_p\otimes e_q\otimes e_r\otimes[e_i,e_j,e_k].\label{eq:m4}
\end{align}
Thus, we have
\begin{align}
    ~[r_{12},r_{13},r_{14}]&=\sum_{p,q,r}\sum_{i<j<k}M_{pqr}^{ijk}(1),\label{r1.1}\\
    ~[r_{12},r_{23},r_{24}]&=\sum_{p,q,r}\sum_{i<j<k}M_{pqr}^{ijk}(2),\label{r1.2}\\
    ~[r_{13},r_{23},r_{34}]&=\sum_{p,q,r}\sum_{i<j<k}M_{pqr}^{ijk}(3),\label{r1.3}\\
    ~[r_{14},r_{24},r_{34}]&=\sum_{p,q,r}\sum_{i<j<k}M_{pqr}^{ijk}(4).\label{r1.4}
\end{align}
Moreover, it is obvious that $M_{pqr}^{ijk}(m)$ is
invariant under the permutations on $\{i,j,k\}$, i.e.,
\begin{equation}
    M^{ijk}_{pqr}(m)=M^{\sigma(i)\sigma(j)\sigma(k)}_{pqr}(m),
    \;\; \forall p,q,r, i<j<k, m=1,2,3,4.
\end{equation}

Let $V$ be a vector space and let $\wedge$ denote the exterior product.  For example,
$$x\wedge y=x\otimes y-y\otimes x,\;\; x_i\wedge x_j\wedge x_k=\sum_{\sigma\in
S_3}\textrm{sgn}(\sigma)x_{\sigma(i)}\otimes
x_{\sigma(j)}\otimes x_{\sigma(k)},\;\;\forall x,y, x_i, x_j, x_k\in V.$$
Let $\wedge^k(V)$ denote the $k$-th exterior power of $V$.

\subsection{Skew-symmetric solutions of the 3-Lie CYBE}

We now give a simplified formula for the 3-Lie CYBE $[[r,r,r]]=0$ when $r=\sum\limits_{i,j}a^{ij}e_i\otimes e_j$ is
skew-symmetric, i.e., $a^{ij}=-a^{ji}$.

\begin{thm}\label{th:3.16}  Let $A$ be a 3-Lie algebra with a
basis $\{ e_1,\cdots,e_n\}$. Let the ternary operation be given by
\begin{equation}\label{eq:3.15}
[e_i,e_j,e_k]=\sum_{m}T_{ijk}^{m} e_m.
\end{equation}
Suppose that $r=\sum\limits_{i,j}a^{ij}e_i\otimes e_j\in A\otimes
A$ is skew-symmetric. Then
\begin{equation}\label{eq:3.22}
[[r,r,r]]=\sum_{p<q<r}\sum_{i<j<k}\sum_l D^{ijk}_{pqr}T_{ijk}^{l}
e_l\wedge e_p\wedge e_q\wedge e_r,
\end{equation}
where
\begin{equation*}
    D^{ijk}_{pqr}:=\sum_{\sigma\in S_3}\mathrm{sgn}(\sigma)a^{\sigma(i)p}a^{\sigma(j)q}a^{\sigma(k)r},\;\;\forall i,j,k,p,q,r=1,\cdots,n.
\end{equation*}
\end{thm}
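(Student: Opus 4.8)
The plan is to begin from the slot-by-slot expansion of $[[r,r,r]]$ already recorded in Eqs.~\eqref{r1.1}--\eqref{r1.4}, namely
\[
[[r,r,r]]=\sum_{p,q,r}\sum_{i<j<k}\big(M^{ijk}_{pqr}(1)+M^{ijk}_{pqr}(2)+M^{ijk}_{pqr}(3)+M^{ijk}_{pqr}(4)\big),
\]
and to collapse the four summands, which differ only in where the bracket $[e_i,e_j,e_k]$ sits, into a single exterior monomial. First I would feed the skew-symmetry $a^{ij}=-a^{ji}$ into the scalar coefficients of Eqs.~\eqref{eq:m2}--\eqref{eq:m4}. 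Each factor of the form $a^{p\sigma(i)}$ (resp.\ $a^{q\sigma(j)}$, $a^{r\sigma(k)}$) is turned into $-a^{\sigma(i)p}$ (resp.\ $-a^{\sigma(j)q}$, $-a^{\sigma(k)r}$), so the coefficients of $M(2),M(3),M(4)$ become $-D^{ijk}_{pqr},\,+D^{ijk}_{pqr},\,-D^{ijk}_{pqr}$, while $M(1)$ already carries $+D^{ijk}_{pqr}$. Substituting the structure constants \eqref{eq:3.15}, each $(i<j<k,p,q,r)$-block then reads
\[
D^{ijk}_{pqr}\sum_l T^l_{ijk}\big(e_l\otimes e_p\otimes e_q\otimes e_r-e_p\otimes e_l\otimes e_q\otimes e_r+e_p\otimes e_q\otimes e_l\otimes e_r-e_p\otimes e_q\otimes e_r\otimes e_l\big).
\]

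Next I would use the index symmetry of $D^{ijk}_{pqr}$. Reading $D^{ijk}_{pqr}=\sum_{\sigma\in S_3}\mathrm{sgn}(\sigma)a^{\sigma(i)p}a^{\sigma(j)q}a^{\sigma(k)r}$ as the determinant of the $3\times 3$ minor of the matrix $(a^{\alpha\beta})$ on rows $\{i,j,k\}$ and columns $\{p,q,r\}$ shows at once that it is alternating under permutations of $(p,q,r)$ (column swaps). Since the outer sum runs over all $p,q,r$, this alternation lets me pass from $\sum_{p,q,r}$ to $\sum_{p<q<r}$ at the cost of antisymmetrizing the tensor factor over $(p,q,r)$: for any tensor $F(e_p,e_q,e_r)$ one has $\sum_{p,q,r}D^{ijk}_{pqr}F=\sum_{p<q<r}D^{ijk}_{pqr}\sum_{\tau\in S_3}\mathrm{sgn}(\tau)F(e_{\tau(p)},e_{\tau(q)},e_{\tau(r)})$.

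The final step is the identity that antisymmetrizing the four-term alternating tensor above over $(p,q,r)$ returns exactly $e_l\wedge e_p\wedge e_q\wedge e_r$. This is the insertion (Laplace) expansion of $e_l\wedge(e_p\wedge e_q\wedge e_r)$: grouping the $24$ signed permutations in the definition of the $4$-fold wedge according to the slot $s$ occupied by $e_l$, each group factors as $(-1)^{s-1}$ times $e_p\wedge e_q\wedge e_r$ placed in the remaining three slots in order, which is precisely the $s$-th term ($s=1,2,3,4$) of the antisymmetrized four-term tensor with signs $+,-,+,-$. Reassembling the blocks then yields \eqref{eq:3.22}.

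I expect the sign bookkeeping to be the only genuine obstacle. One must check that the three coefficient flips in the first step match the alternating pattern $+,-,+,-$ of the four insertion slots, and that the determinantal alternation of the second step and the $S_4$-grouping of the third step use compatible orderings of $(p,q,r)$. Everything else is routine relabeling; the conceptual point is that skew-symmetry of $r$ is exactly what forces the four slot-contributions to agree up to these signs and recombine into a single top-degree exterior form.
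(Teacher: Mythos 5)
Your proposal is correct and follows essentially the same route as the paper: skew-symmetry turns the four slot-coefficients into $+D,-D,+D,-D$, the alternation of $D^{ijk}_{pqr}$ in $(p,q,r)$ (which the paper proves directly in Lemma~\ref{le:3.2} and you obtain via the determinant interpretation) reduces the sum to $p<q<r$, and your Laplace-expansion step is exactly the paper's verification that $(\phi_{11}-\phi_{12}+\phi_{23}\phi_{12}-\phi_{34}\phi_{23}\phi_{12})\,e_l\otimes(e_p\wedge e_q\wedge e_r)=e_l\wedge e_p\wedge e_q\wedge e_r$. The sign bookkeeping you flag does work out as you anticipate, so no gap remains.
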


The theorem has a direct consequence.

\begin{cor}\label{co:3.2}
Let $A$ be a 3-Lie algebra. If $r\in \wedge^2 (A)$, then $[[r,r,r]]\in \wedge^4(A)$.
\end{cor}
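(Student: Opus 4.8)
The plan is to read the conclusion directly off the explicit formula established in Theorem~\ref{th:3.16}. First I would observe that the hypothesis $r\in\wedge^2(A)$ is precisely the skew-symmetry condition $a^{ij}=-a^{ji}$ required in that theorem: writing $r=\sum_{i,j}a^{ij}e_i\otimes e_j$, membership in the second exterior power of $A$ is equivalent to the coefficient matrix $(a^{ij})$ being antisymmetric. Hence Theorem~\ref{th:3.16} applies and yields the identity~\eqref{eq:3.22}.

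Next I would examine the right-hand side of~\eqref{eq:3.22}. It is a finite linear combination of the fourfold wedge products $e_l\wedge e_p\wedge e_q\wedge e_r$, with scalar coefficients $D^{ijk}_{pqr}T^{l}_{ijk}$. Since each such wedge product is by definition an element of $\wedge^4(A)$ (and vanishes whenever two of the indices $l,p,q,r$ coincide), the entire sum lies in $\wedge^4(A)$. This already gives $[[r,r,r]]\in\wedge^4(A)$, which is the assertion of the corollary.

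There is essentially no obstacle here: the statement is an immediate consequence of the \emph{shape} of formula~\eqref{eq:3.22}, and all of the genuine work --- namely, that the four terms $[r_{12},r_{13},r_{14}]$, $[r_{12},r_{23},r_{24}]$, $[r_{13},r_{23},r_{34}]$, $[r_{14},r_{24},r_{34}]$ combine, under skew-symmetry of $r$, into a single expression alternating in all four tensor slots --- has already been carried out in establishing Theorem~\ref{th:3.16}. If one preferred a proof independent of that computation, the alternative would be to verify directly that $[[r,r,r]]$ is alternating in each of its four tensor factors whenever $r$ is skew-symmetric, by exploiting the $S_3$-antisymmetry of the coefficients $D^{ijk}_{pqr}$ together with the skew-symmetry of the 3-Lie bracket; but this merely reproduces the content of the theorem and is unnecessary for the corollary.
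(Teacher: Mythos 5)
Your proposal is correct and is exactly the paper's argument: the authors also derive the corollary as a ``direct consequence'' of Theorem~\ref{th:3.16}, reading membership in $\wedge^4(A)$ off the fact that formula~\eqref{eq:3.22} expresses $[[r,r,r]]$ as a linear combination of the alternating tensors $e_l\wedge e_p\wedge e_q\wedge e_r$. No gap; your identification of $r\in\wedge^2(A)$ with the skew-symmetry hypothesis $a^{ij}=-a^{ji}$ is the only point that needs saying, and you say it.
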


\begin{rmk}
{\rm
This corollary can be regarded as a generalization of the following result on Lie algebras in the context of 3-Lie algebras:
for a Lie algebra $\frak g$, if $r\in \wedge^2 (\frak g)$, then
$$
[[r,r]] =
[r_{12},r_{13}]+[r_{12},r_{23}]+[r_{13},r_{23}]\in \wedge^3 (\frak
g).$$
}
\end{rmk}

We will prove \text{Theorem~\ref{th:3.16}} in several steps.

First by the skew-symmetry of $r$, Eqs.~(\ref{eq:m1}) -- (\ref{eq:m2}) become
    \begin{align}
        M_{pqr}^{ijk}(1)&=D^{ijk}_{pqr}[e_i,e_j,e_k]\otimes e_p\otimes e_q\otimes e_r,\label{m1.1}\\
        M_{pqr}^{ijk}(2)&=-D^{ijk}_{pqr}e_p\otimes[e_i,e_j,e_k]\otimes e_q\otimes e_r,\label{m1.2}\\
        M_{pqr}^{ijk}(3)&=D^{ijk}_{pqr}e_p\otimes e_q\otimes[e_i,e_j,e_k]\otimes e_r,\label{m1.3}\\
        M_{pqr}^{ijk}(4)&=-D^{ijk}_{pqr}e_p\otimes e_q\otimes e_r\otimes[e_i,e_j,e_k].\label{m1.4}
    \end{align}

\begin{lem}\label{le:3.2}  With the notations and conditions as
above. Then
\begin{enumerate}
\item $D^{ijk}_{pqr}=\mathrm{sgn}(\tau )D^{\tau (i)\tau (j)\tau (k)}_{pqr}$ for any
         $\tau \in S_3$.
\item $D^{ijk}_{pqr}=-D^{pqr}_{ijk}$.
\item  $
D^{ijk}_{pqr}=\mathrm{sgn}(\ttau)D^{ijk}_{\ttau(p)\ttau(q)\ttau(r)}$
for any $\ttau\in S_3$.
\item If $p, q, r$ are not distinct, then
$M^{ijk}_{pqr}(m)=0$ for any $m=1,2,3,4$.
\item $D_{ijk}^{ijk}=0$.
\end{enumerate}
\end{lem}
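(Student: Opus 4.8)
The plan is to observe that $D^{ijk}_{pqr}$ is literally a $3\times 3$ determinant, after which all five statements become standard facts about determinants. Writing $(i_1,i_2,i_3):=(i,j,k)$ and $(p_1,p_2,p_3):=(p,q,r)$ and using the paper's convention $\sigma(i_m)=i_{\sigma(m)}$, the defining sum reorganizes as
\[
D^{ijk}_{pqr}=\sum_{\sigma\in S_3}\mathrm{sgn}(\sigma)\,a^{i_{\sigma(1)}p_1}a^{i_{\sigma(2)}p_2}a^{i_{\sigma(3)}p_3}=\det B,\qquad B_{st}:=a^{i_s p_t}.
\]
The first thing I would do is verify this identification with care, since everything hinges on matching the action of $\sigma$ to the rows of $B$ (the upper indices $i,j,k$) while the columns are indexed by the fixed positions $p,q,r$; here the $m$-th factor is $a^{i_{\sigma(m)}p_m}=B_{\sigma(m),m}$, so the sum is exactly $\sum_\sigma \mathrm{sgn}(\sigma)\prod_m B_{\sigma(m),m}=\det B$.

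Granting the determinant formula, parts (1) and (3) are immediate. Permuting $(i,j,k)$ by $\tau$ permutes the rows of $B$ and multiplies $\det B$ by $\mathrm{sgn}(\tau)$, giving $D^{\tau(i)\tau(j)\tau(k)}_{pqr}=\mathrm{sgn}(\tau)D^{ijk}_{pqr}$, which is (1); permuting $(p,q,r)$ by $\tilde\tau$ permutes the columns and yields (3) the same way. For part (2) I would invoke skew-symmetry $a^{ab}=-a^{ba}$: the matrix attached to $D^{pqr}_{ijk}$ has entries $a^{p_s i_t}=-a^{i_t p_s}=-B_{ts}$, i.e.\ it equals $-B^{T}$, whence $D^{pqr}_{ijk}=\det(-B^{T})=(-1)^3\det B=-D^{ijk}_{pqr}$.

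Parts (4) and (5) then drop out as corollaries. Part (5) is the special case $p=i,q=j,r=k$ of (2), forcing $D^{ijk}_{ijk}=-D^{ijk}_{ijk}$ and hence $D^{ijk}_{ijk}=0$; equivalently $B=(a^{i_si_t})$ is then a skew-symmetric matrix of odd size. For part (4), if two of $p,q,r$ coincide then two columns of $B$ agree, so $D^{ijk}_{pqr}=\det B=0$; since Eqs.~\eqref{m1.1}--\eqref{m1.4} express each $M^{ijk}_{pqr}(m)$ as $\pm D^{ijk}_{pqr}$ times a tensor, all four of them vanish.

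I expect the only genuine obstacle to be bookkeeping rather than mathematics: pinning down the convention $\sigma(i_m)=i_{\sigma(m)}$ so that $D^{ijk}_{pqr}=\det B$ rather than $\det B^{T}$ (this choice does not affect any conclusion but can scramble intermediate signs), and confirming that the lone minus sign in (2) is the $(-1)^3$ of an odd-order transpose-with-sign, not an even power. Everything else is routine linear algebra. As an alternative to the determinant packaging, each identity can be obtained directly by the index substitution $\rho=\tau\sigma$ in the summation, but the determinant viewpoint makes the roles of rows, columns, transpose, and odd size transparent and handles all five parts uniformly.
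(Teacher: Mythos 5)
Your proof is correct, and it reaches the same conclusions by a slightly different organization than the paper. The paper argues each part by direct manipulation of the Leibniz-type sum: part (a) via the substitution $\sigma\mapsto\sigma\tau$ using $S_3\tau=S_3$, part (b) by writing out all six monomials and flipping each factor with $a^{pi}=-a^{ip}$ to extract the $(-1)^3$, part (c) by composing (a) and (b), and parts (d), (e) as corollaries. You instead make the single observation that $D^{ijk}_{pqr}=\det B$ with $B_{st}=a^{i_sp_t}$, after which (1) and (3) are row and column permutations, (2) is $\det(-B^{T})=(-1)^3\det B$, (5) is the skew-symmetric odd-size case, and (4) follows from a repeated column together with Eqs.~\eqref{m1.1}--\eqref{m1.4}. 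The underlying computations are isomorphic --- your $\det(-B^{T})$ is exactly the paper's term-by-term sign flip --- but the determinant packaging buys a uniform treatment of all five parts and makes the provenance of the lone minus sign in (2) transparent; the paper's version is more self-contained in that it never needs the reader to verify the row-versus-column bookkeeping in the identification $D^{ijk}_{pqr}=\det B$, which, as you note, is the one place where care is required. One small remark: your proof of (3) is direct (column permutation), whereas the paper deduces it from (a) and (b); both are fine, and your route for (4), permuting columns rather than invoking (c), is likewise equivalent to the paper's.
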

    \begin{proof}
      (a)   First for any $\tau \in S_3$, since $S_3\tau=S_3$, we have
        \begin{align*}
            D^{ijk}_{pqr}&=\sum_{\sigma\in S_3}\mathrm{sgn}(\sigma\tau )a^{\sigma\tau (i)p}a^{\sigma\tau (j)q}a^{\sigma\tau (k)r}\\
            &=\mathrm{sgn}(\tau )\sum_{\sigma\in S_3}\mathrm{sgn}(\sigma)a^{\sigma\tau (i)p}a^{\sigma\tau (j)q}a^{\sigma\tau (k)r}\\
            &=\mathrm{sgn}(\tau )D^{\tau (i)\tau (j)\tau (k)}_{pqr}.
        \end{align*}

    (b) In fact, we have
        \begin{align*}
            D^{ijk}_{pqr}&=a^{ip}a^{jq}a^{kr}-a^{ip}a^{kq}a^{jr}+a^{jp}a^{kq}a^{ir}-a^{jp}a^{iq}a^{kr}+a^{kp}a^{iq}a^{jr}-a^{kp}a^{jq}a^{ir},\\
            D^{pqr}_{ijk}&=a^{pi}a^{qj}a^{rk}-a^{pi}a^{rj}a^{qk}+a^{qi}a^{rj}a^{pk}-a^{qi}a^{pj}a^{rk}+a^{ri}a^{pj}a^{qk}-a^{ri}a^{qj}a^{pk}\\
            &=(-1)^3(a^{ip}a^{jq}a^{kr}-a^{ip}a^{jr}a^{kq}+a^{iq}a^{jr}a^{kp}-a^{iq}a^{jp}a^{kr}+a^{ir}a^{jp}a^{kq}-a^{ir}a^{jq}a^{kp})\\
            &=-D^{ijk}_{pqr}.
        \end{align*}
   Hence $D^{ijk}_{pqr}=-D^{pqr}_{ijk}$.

(c) By (a) and (b), we have
        \begin{equation*}
            D^{ijk}_{pqr}=-D^{pqr}_{ijk}=-\mathrm{sgn}(\ttau)D_{ijk}^{\ttau(p)\ttau(q)\ttau(r)}=\mathrm{sgn}(\ttau)D^{ijk}_{\ttau(p)\ttau(q)\ttau(r)}.
        \end{equation*}

 (d) It is a direct consequence due to (c) by taking $\sigma$ to be the transposition exchanging two of $p,q,r$ which are not distinct.

(e) This follows since by (b), $D^{ijk}_{ijk}=-D^{ijk}_{ijk}$ and hence must be zero.

Now the proof of the lemma is completed.
    \end{proof}

By Lemma~\ref{le:3.2}, we can assume that in Eqs.~(\ref{r1.1}) -- (\ref{r1.4}), both $\{i,j,k\}$ and $\{p,q,r\}$ consist of distinct elements.
Then these equations can be simplified to
\begin{align}
[r_{12},r_{13},r_{14}]=\sum_{p<q<r}\sum_{i<j<k}\sum_{\ttau\in \tS_3}M^{ijk}_{\ttau(p)\ttau(q)\ttau(r)}(1),\label{r2.1}\\
[r_{12},r_{23},r_{24}]=\sum_{p<q<r}\sum_{i<j<k}\sum_{\ttau\in \tS_3}M^{ijk}_{\ttau(p)\ttau(q)\ttau(r)}(2),\label{r2.2}\\
[r_{13},r_{23},r_{34}]=\sum_{p<q<r}\sum_{i<j<k}\sum_{\ttau\in \tS_3}M^{ijk}_{\ttau(p)\ttau(q)\ttau(r)}(3),\label{r2.3}\\
[r_{14},r_{24},r_{34}]=\sum_{p<q<r}\sum_{i<j<k}\sum_{\ttau\in \tS_3}M^{ijk}_{\ttau(p)\ttau(q)\ttau(r)}(4).\label{r2.4}
\end{align}

Now we can give the proof of Theorem~\ref{th:3.16}.
\smallskip

\noindent
{\it Proof of Theorem~\ref{th:3.16}.} By Eqs.~(\ref{r2.1}) -- (\ref{r2.4}),  we have
\begin{equation}\label{eq:rrr}
[[r,r,r]]=\sum_m\sum_{p<q<r}\sum_{i<j<k}\sum_{\ttau\in \tS_3}M^{ijk}_{\ttau(p)\ttau(q)\ttau(r)}(m).
\end{equation}
Next we need to show
\begin{equation}\label{eq:3.21}
\sum_m\sum_{\ttau\in \tS_3}M^{ijk}_{\ttau(p)\ttau(q)\ttau(r)}(m)=\sum_l D^{ijk}_{pqr}T_{ijk}^{l}e_l\wedge e_p\wedge e_q\wedge e_r.
\end{equation}
Define an operator $\phi_{pq}:A^{\otimes m}\rightarrow A^{\otimes m}$ by
$$\phi_{pq}(\sum x_1\otimes\cdots\otimes x_p\otimes\cdots\otimes x_q\otimes\cdots\otimes x_m)=\sum x_1\otimes\cdots\otimes x_q\otimes\cdots\otimes x_p\otimes\cdots\otimes x_m.$$
Obviously, $\phi_{pp}$ is the identity. Then by Eqs.~(\ref{m1.1}) -- (\ref{m1.4}), we have
    \begin{align*}
        M_{pqr}^{ijk}(2)=-\phi_{12}M_{pqr}^{ijk}(1),\;\;
        M_{pqr}^{ijk}(3)=\phi_{23}\phi_{12}M_{pqr}^{ijk}(1),\;\;
        M_{pqr}^{ijk}(4)=-\phi_{34}\phi_{23}\phi_{12}M_{pqr}^{ijk}(1).
    \end{align*}
Moreover, by \text{Lemma \ref{le:3.2}} ~ (c), we have
    \begin{align*}
        M_{\tau(p)\tau(q)\tau(r)}^{ijk}(1)&=\sum_{\ttau\in \tS_3}D^{ijk}_{\ttau(p)\ttau(q)\ttau(r)}[e_i,e_j,e_k]\otimes e_{\ttau(p)}\otimes e_{\ttau(q)}\otimes e_{\ttau(r)}\\
                        &=\sum_l\sum_{\ttau\in \tS_3}D^{ijk}_{\ttau(p)\ttau(q)\ttau(r)}T_{ijk}^l e_l\otimes e_{\ttau(p)}\otimes e_{\ttau(q)}\otimes e_{\ttau(r)}\\
                        &=\sum_l\sum_{\ttau\in \tS_3}\mathrm{sgn}(\ttau)D^{ijk}_{pqr}T_{ijk}^l e_l\otimes e_{\ttau(p)}\otimes e_{\ttau(q)}\otimes e_{\ttau(r)}\\
                        &=\sum_l D^{ijk}_{pqr}T_{ijk}^l e_l\otimes\big(\sum_{\ttau\in \tS_3}\mathrm{sgn}(\ttau)e_{\ttau(p)}\otimes e_{\ttau(q)}\otimes e_{\ttau(r)}\big)\\
                        &=\sum_l D^{ijk}_{pqr}T_{ijk}^l e_l\otimes\big(e_p\wedge e_q\wedge e_r).
    \end{align*}
Therefore
    \begin{equation*}
        \sum_m\sum_{\ttau\in \tS_3}M^{ijk}_{\ttau(p)\ttau(q)\ttau(r)}(m)=\sum_l D^{ijk}_{pqr}T_{ijk}^{l}(\phi_{11}-\phi_{12}+\phi_{23}\phi_{12}-\phi_{34}\phi_{23}\phi_{12})e_l\otimes(e_p\wedge e_q\wedge e_r).
    \end{equation*}
It remains to prove
 $$(\phi_{11}-\phi_{12}+\phi_{23}\phi_{12}-\phi_{34}\phi_{23}\phi_{12})e_l\otimes(e_p\wedge e_q\wedge e_r)=e_l\wedge e_p\wedge e_q\wedge e_r.$$ In fact, the right hand side gives
    \begin{align*}
        e_l\wedge e_p\wedge e_q\wedge e_r=&\sum_{\ttau\in \tS_3}\mathrm{sgn}(\ttau)e_l\otimes e_{\ttau(p)}\otimes e_{\ttau(q)}\otimes e_{\ttau(r)}-\sum_{\ttau\in \tS_3}\mathrm{sgn}(\ttau)e_{\ttau(p)}\otimes e_l\otimes e_{\ttau(q)}\otimes e_{\ttau(r)}\\
            &+\sum_{\ttau\in \tS_3}\mathrm{sgn}(\ttau)e_{\ttau(p)}\otimes e_{\ttau(q)}\otimes e_l\otimes e_{\ttau(r)}-\sum_{\ttau\in \tS_3}\mathrm{sgn}(\ttau)e_{\ttau(p)}\otimes e_{\ttau(q)}\otimes e_{\ttau(r)}\otimes e_l\\
             =&(\phi_{11}-\phi_{12}+\phi_{23}\phi_{12}-\phi_{34}\phi_{23}\phi_{12})e_l\otimes\big(\sum_{\ttau\in \tS_3}\mathrm{sgn}(\ttau)e_{\ttau(p)}\otimes e_{\ttau(q)}\otimes e_{\ttau(r)}\big)\\
             =&(\phi_{11}-\phi_{12}+\phi_{23}\phi_{12}-\phi_{34}\phi_{23}\phi_{12})e_l\otimes(e_p\wedge e_q\wedge e_r)
    \end{align*}
Hence \text{Theorem~\ref{th:3.16}} holds. \hfill $\Box$

\subsection{Skew-symmetric solutions of the 3-Lie CYBE in the complex 3-Lie algebras in dimensions 3 and
4}

We first consider the dimension 3 case.

\begin{thm}\label{th:dim3} Let $A$ be a 3-dimensional 3-Lie algebra. Then for any $r\in\wedge^2(A)$,
$[[r,r,r]]=0$. That is, any $r\in \wedge^2(A)$ is a
solution of 3-Lie CYBE in a 3-dimensional 3-Lie algebra.
\end{thm}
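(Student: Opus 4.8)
The plan is to reduce the claim to a pure dimension count, leaning entirely on the structural result already recorded in Corollary~\ref{co:3.2}. The point is that once we know $[[r,r,r]]$ lands in $\wedge^4(A)$, the hypothesis $\dim A = 3$ does the rest for free, with no further appeal to the specific 3-Lie bracket on $A$.

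First I would invoke Corollary~\ref{co:3.2}: since $r \in \wedge^2(A)$, we have $[[r,r,r]] \in \wedge^4(A)$. This is the only substantive input, and it has already been proved—it is itself a consequence of the skew-symmetrization carried out in the proof of Theorem~\ref{th:3.16}, where each summand of $[[r,r,r]]$ is shown to be of the form $e_l \wedge e_p \wedge e_q \wedge e_r$.

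Next I would observe that $\wedge^4(A) = 0$ whenever $\dim A = 3$: a wedge product of four vectors drawn from a $3$-dimensional space must vanish, since among any four indices chosen from $\{1,2,3\}$ at least two coincide by the pigeonhole principle, and a repeated factor annihilates the wedge. Combining the two observations yields $[[r,r,r]] = 0$ for every $r \in \wedge^2(A)$, which is precisely the assertion.

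Equivalently, one can bypass the corollary and read the conclusion straight off the explicit formula \eqref{eq:3.22}: every summand carries the factor $e_l \wedge e_p \wedge e_q \wedge e_r$, a wedge of four basis vectors, and in dimension $3$ the indices $l,p,q,r$ cannot be pairwise distinct, so each term is zero. In either approach the main obstacle is essentially absent—all the real content sits in Corollary~\ref{co:3.2}, and the $3$-dimensional case collapses exactly because the fourth exterior power of a $3$-dimensional space is trivial.
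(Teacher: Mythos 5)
Your argument is exactly the paper's proof: invoke Corollary~\ref{co:3.2} to place $[[r,r,r]]$ in $\wedge^4(A)$, then note that $\wedge^4$ of a $3$-dimensional space vanishes. The alternative reading directly from Eq.~\eqref{eq:3.22} is a harmless restatement of the same idea, so the proposal is correct and matches the paper.
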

\begin{proof}
By Corollary~\ref{co:3.2}, $[[r,r,r]]\in \wedge^4(A)$. On the other hand,  since $\mathrm{dim}~A=3$,  any element in $\wedge^4(A)$ is zero. Therefore $[[r,r,r]]=0$.
\end{proof}

Next let $A$ be a 4-dimensional 3-Lie algebra with a
basis $\{ e_1,\cdots,e_4\}$. Then we have
\begin{equation}\label{eq:4.1}
[e_i,e_j,e_k]=\sum_{m=1}^4T_{ijk}^{m} e_m, \forall i,j,k=1,\cdots 4,
\end{equation}
for constants $T_{ijk}^m$.

\begin{lem}\label{le:3.8} With the notations and conditions as
above, assume that $T_{ijk}^l\neq0$ only if $i,j,k,l$ are distinct. Then for any skew-symmetric $r\in  \wedge^2(A)$,
$[[r,r,r]]=0$.
\end{lem}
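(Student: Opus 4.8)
The plan is to read off the obstruction directly from the closed formula for $[[r,r,r]]$ established in Theorem~\ref{th:3.16}, namely
$$[[r,r,r]]=\sum_{p<q<r}\sum_{i<j<k}\sum_l D^{ijk}_{pqr}T_{ijk}^{l}\, e_l\wedge e_p\wedge e_q\wedge e_r,$$
and to argue that every summand which could possibly survive in fact carries a vanishing coefficient. Since $\dim A=4$, the key leverage is that a four-fold wedge of basis vectors $e_l\wedge e_p\wedge e_q\wedge e_r$ is nonzero precisely when the indices $l,p,q,r$ are pairwise distinct, i.e. when $\{l,p,q,r\}=\{1,2,3,4\}$.

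First I would fix a triple $p<q<r$ and ask which terms contribute. For the wedge $e_l\wedge e_p\wedge e_q\wedge e_r$ to be nonzero, $l$ must be the unique index in $\{1,2,3,4\}\setminus\{p,q,r\}$, so $l$ is pinned down completely by $\{p,q,r\}$. Next I would invoke the standing hypothesis: $T_{ijk}^l\neq 0$ forces $i,j,k,l$ to be distinct. Combined with $l\notin\{p,q,r\}$ and $\dim A=4$, this forces $\{i,j,k\}=\{1,2,3,4\}\setminus\{l\}=\{p,q,r\}$, and since both triples are written in increasing order, $(i,j,k)=(p,q,r)$.

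At this point the coefficient of any surviving summand is $D^{ijk}_{pqr}=D^{pqr}_{pqr}=D^{ijk}_{ijk}$, which vanishes by Lemma~\ref{le:3.2}(e) (the relation $D^{ijk}_{ijk}=-D^{ijk}_{ijk}$ coming from the skew-symmetry of $r$ via Lemma~\ref{le:3.2}(b)). Hence each term in the triple sum is either annihilated by the wedge (when $l\in\{p,q,r\}$), killed by the hypothesis on $T_{ijk}^l$ (when $\{i,j,k\}\neq\{p,q,r\}$), or multiplied by the zero coefficient $D^{ijk}_{ijk}$ (when $(i,j,k)=(p,q,r)$). In every case the contribution is zero, so $[[r,r,r]]=0$.

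I do not expect a serious obstacle here: the content is entirely in bookkeeping the two independent distinctness constraints — one from the nonvanishing of the four-fold wedge, one from the hypothesis on the structure constants — and noticing that in dimension exactly $4$ these constraints collapse the index sets to the single diagonal case $(i,j,k)=(p,q,r)$, where the antisymmetry $D^{ijk}_{ijk}=0$ finishes the argument. The only point demanding a little care is confirming that $l$ is \emph{uniquely} determined by $\{p,q,r\}$, which is precisely where $\dim A=4$ (as opposed to a larger dimension, where $l$ would have several choices) enters the proof.
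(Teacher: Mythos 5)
Your proof is correct and follows essentially the same route as the paper: use the closed formula of Theorem~\ref{th:3.16}, note that the wedge forces $l,p,q,r$ distinct while the hypothesis on $T^l_{ijk}$ forces $i,j,k,l$ distinct, so that in dimension $4$ the orderings pin down $(i,j,k)=(p,q,r)$, and conclude with $D^{pqr}_{pqr}=0$ from Lemma~\ref{le:3.2}(e). No gaps.
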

    \begin{proof}
First $e_l\wedge e_p\wedge e_q\wedge e_r\in \wedge^4 (A)$ is not zero precisely when $l,p,q,r$ are distinct. By the assumption, the indices of a nonzero $T_{ijk}^{l}$ in Eq.~ (\ref{eq:3.22}) are also distinct. But $i,j,k,p,q,r,l\in \{1,2,3,4\}$ and $i<j<k,~p<q<r$ in Eq.~(\ref{eq:3.22}). Thus we have  $i=p,~j=q,~k=r$. Then Eq.~(\ref{eq:3.22}) gives
$$[[r,r,r]]=\sum_l\sum_{p<q<r}D^{pqr}_{pqr}e_l\wedge e_p\wedge e_q\wedge e_r.$$
        By \text{Lemma~\ref{le:3.2}} (e), $[[r,r,r]]=0$.
    \end{proof}

\begin{thm}\label{th:dim4}
Let $A$ be a 4-dimensional 3-Lie algebra. If $A$ is one of the complex 3-Lie algebras
of Cases~(1), (3), (4),and (7) given in Proposition~\ref{th:2.4},  then any skew-symmetric $r\in \wedge^2(A)$
satisfies $[[r,r,r]]=0$.
\end{thm}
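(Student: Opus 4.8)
The plan is to reduce Theorem~\ref{th:dim4} to Lemma~\ref{le:3.8}. That lemma asserts that whenever the structure constants $T_{ijk}^l$ of a $4$-dimensional $3$-Lie algebra (as in Eq.~\eqref{eq:4.1}) are nonzero only for distinct indices $i,j,k,l$, every skew-symmetric $r\in\wedge^2(A)$ solves the $3$-Lie CYBE. So the entire task is to verify, case by case, that each of the algebras in Cases~(1),~(3),~(4),~(7) of Proposition~\ref{th:2.4} meets this distinctness hypothesis.

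First I would read off the nonzero structure constants from the listed products. For Case~(1), the brackets $[e_1,e_2,e_3]=e_4$, $[e_1,e_2,e_4]=e_3$, $[e_1,e_3,e_4]=e_2$, $[e_2,e_3,e_4]=e_1$ each have an output index disjoint from the three input indices, so in every case $\{i,j,k,l\}=\{1,2,3,4\}$ consists of four distinct elements. The same inspection applies to Case~(7), whose three products $[e_1,e_2,e_4]=e_3$, $[e_1,e_3,e_4]=e_2$, $[e_2,e_3,e_4]=e_1$ likewise have pairwise distinct input-output indices. Cases~(3) and~(4) involve only products already appearing above (namely $[e_2,e_3,e_4]=e_1$ and $[e_1,e_3,e_4]=e_2$), so they satisfy the distinctness condition as well.

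Having confirmed the hypothesis in all four cases, I would simply invoke Lemma~\ref{le:3.8} to conclude that $[[r,r,r]]=0$ for every skew-symmetric $r\in\wedge^2(A)$, which finishes the proof.

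There is no real obstacle here: the argument is pure bookkeeping, and the substance lies in Theorem~\ref{th:3.16} and Lemma~\ref{le:3.8}, which are already available. The only conceptual point worth flagging is \emph{why} Cases~(2),~(5), and~(6) are excluded from the statement. Each of them contains a product in which the output index coincides with one of the inputs---for instance $[e_1,e_2,e_3]=e_1$ in Case~(2), or the term $e_2$ appearing in $[e_2,e_3,e_4]$ in Cases~(5) and~(6). For these the distinctness hypothesis fails, Lemma~\ref{le:3.8} no longer applies, and one expects genuinely nonzero contributions to $[[r,r,r]]$, so those cases would instead require a direct computation through the formula of Theorem~\ref{th:3.16}.
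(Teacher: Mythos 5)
Your proposal is correct and matches the paper's argument exactly: the paper also deduces Theorem~\ref{th:dim4} directly from Lemma~\ref{le:3.8}, and your case-by-case verification of the distinctness hypothesis is precisely the bookkeeping that the paper leaves implicit.
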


    \begin{proof}
The proof follows directly from \text{Lemma~\ref{le:3.8}}.
    \end{proof}

\begin{thm}\label{th:3.17}
Let $A$ be a 4-dimensional 3-Lie algebra with a basis
$\{e_1,e_2,e_3, e_4\}$ in Case~(2), Case (5) or Case~(6) in
Proposition~\ref{th:2.4}.  Let $r=\sum\limits_{i,j}
a^{ij}e_i\otimes e_j\in A\otimes A$ be skew-symmetric.
\begin{enumerate}
\item If $A$ is the complex 3-Lie algebra of Case~(2), then  $r$ satisfies $[[r,r,r]]=0$ if and only if
$$a^{23}(a^{12}a^{34}-a^{13}a^{24}-a^{32}a^{14})=0.$$
\item If $A$ is the complex 3-Lie algebra of Case~(5), then  $r$ satisfies $[[r,r,r]]=0$ if and only if
$$a^{34}(a^{12}a^{34}-a^{32}a^{14}+a^{42}a^{13})=0.$$
\item If $A$ is the complex 3-Lie algebra of Case~(6), then  $r$ satisfies $[[r,r,r]]=0$ if and only if
$$a^{34}(a^{12}a^{43}+a^{32}a^{14}-a^{42}a^{13})=0.$$
\end{enumerate}
\end{thm}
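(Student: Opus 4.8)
The plan is to feed the structure constants of each of the three algebras into the closed formula \eqref{eq:3.22} of Theorem~\ref{th:3.16} and extract the condition under which the resulting $4$-vector vanishes. The key simplification is dimensional: since $\dim A=4$, the space $\wedge^4(A)$ is one-dimensional, spanned by $e_1\wedge e_2\wedge e_3\wedge e_4$. Hence a summand $D^{ijk}_{pqr}T^l_{ijk}\,e_l\wedge e_p\wedge e_q\wedge e_r$ of \eqref{eq:3.22} can be nonzero only when $\{l,p,q,r\}=\{1,2,3,4\}$, in which case $e_l\wedge e_p\wedge e_q\wedge e_r=\pm\,e_1\wedge e_2\wedge e_3\wedge e_4$, the sign being that of the permutation sorting $(l,p,q,r)$. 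Thus each case reduces to collecting a small number of surviving coefficients.

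First I would list, for each algebra in Proposition~\ref{th:2.4}, the nonzero structure constants $T^l_{ijk}$ with $i<j<k$: only $T^1_{123}=1$ in Case~(2); $T^2_{234}=T^1_{134}=1$ in Case~(5); and $T^1_{234}=\alpha$, $T^2_{234}=T^2_{134}=1$ in Case~(6). For each such constant the superscript $l$ is fixed, which forces $\{p,q,r\}$ (arranged increasingly) to be the complement of $\{l\}$ in $\{1,2,3,4\}$; this collapses the triple sum over $i<j<k$, $p<q<r$ and $l$ to at most three terms. In Case~(2) this yields directly $[[r,r,r]]=D^{123}_{234}\,e_1\wedge e_2\wedge e_3\wedge e_4$. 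In Cases~(5) and~(6) I would then invoke Lemma~\ref{le:3.2}: part~(e), $D^{234}_{234}=D^{134}_{134}=0$, annihilates the contributions of $T^1_{234}$ and $T^2_{134}$ in Case~(6), while part~(b), $D^{234}_{134}=-D^{134}_{234}$, combines the remaining two terms in Case~(5) into $2D^{134}_{234}$.

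The last step is to expand the single surviving coefficient using the six-term expression for $D^{ijk}_{pqr}$ recorded in the proof of Lemma~\ref{le:3.2}(b), kill the diagonal entries via $a^{ii}=0$, and factor out the common entry $a^{pq}$ using $a^{ij}=-a^{ji}$. For Case~(2) this rewrites $D^{123}_{234}$ as $a^{23}(a^{12}a^{34}-a^{13}a^{24}-a^{32}a^{14})$; for Case~(5) it rewrites $D^{134}_{234}$ as $a^{34}(a^{12}a^{34}-a^{32}a^{14}+a^{42}a^{13})$; and for Case~(6), where the surviving coefficient is $-D^{234}_{134}$, expanding $D^{234}_{134}$ gives $a^{34}(a^{12}a^{43}+a^{32}a^{14}-a^{42}a^{13})$. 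Equating each surviving coefficient to zero produces exactly the three stated equivalences.

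I expect the only delicate point to be the sign bookkeeping: one must track simultaneously the sign arising from reordering $e_l\wedge e_p\wedge e_q\wedge e_r$ into the standard generator and the signs introduced by the symmetries in Lemma~\ref{le:3.2}, together with the skew-symmetry substitutions in the final factoring. The rest is a direct, if slightly tedious, evaluation; notably the $\alpha$-dependent term in Case~(6) drops out precisely because of Lemma~\ref{le:3.2}(e), which also explains why Cases~(5) and~(6) yield equivalent (mutually negative) conditions.
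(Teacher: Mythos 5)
Your proposal is correct and follows essentially the same route as the paper: both reduce Eq.~\eqref{eq:3.22} to the single surviving coefficient in $\wedge^4(A)$ for each case, use Lemma~\ref{le:3.2}(e) to kill the $D^{234}_{234}$ and $D^{134}_{134}$ terms (in particular the $\alpha$-dependent one in Case~(6)) and Lemma~\ref{le:3.2}(b) to combine the two terms in Case~(5) into $2D^{134}_{234}$, and then expand the remaining determinant-like coefficient. No further comment is needed.
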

    \begin{proof}
        (a) For the 3-Lie algebra of Case~(2), only $T^1_{123}\neq0$. So
        \begin{align*}
            [[r,r,r]]=\sum_{p<q<r}D^{123}_{pqr}e_1\wedge e_p\wedge e_q\wedge e_r
                     =D^{123}_{234}e_1\wedge e_2\wedge e_3\wedge e_4.
        \end{align*}
        Hence $[[r,r,r]]=0$ if and only if $D^{123}_{234}=a^{23}(a^{12}a^{34}-a^{13}a^{24}-a^{32}a^{14})=0$.
        (b)For the 3-Lie algebra of Case~(5),only $T_{134}^1\neq0$ and $T_{234}^2\neq0$. So
    \begin{align*}
        [[r,r,r]]&=\sum_{p<q<r}D^{134}_{pqr}e_1\wedge e_p\wedge e_q\wedge e_r+\sum_{p<q<r}D^{234}_{pqr}e_2\wedge e_p\wedge e_q\wedge e_r\\
            &=D^{134}_{234}e_1\wedge e_2\wedge e_3\wedge e_4+D^{234}_{134}e_2\wedge e_1\wedge e_3\wedge e_4\\
            &=(D^{134}_{234}-D^{234}_{134})e_1\wedge e_2\wedge e_3\wedge e_4,
    \end{align*}
        By \text{Lemma~\ref{le:3.2}} (b), $D^{134}_{234}-D^{234}_{134}=2D^{134}_{234}$. Hence $[[r,r,r]]=0$ if and only if $D^{123}_{234}=a^{34}(a^{12}a^{34}-a^{32}a^{14}+a^{42}a^{13})=0$.\\
        (c) For the 3-Lie algebra of Case~(6), only $T^1_{234}\neq0$, $T^2_{234}\neq0$ and $T^2_{134}\neq0$. So
        \begin{align*}
            [[r,r,r]]=&\sum_{p<q<r}D^{234}_{pqr}\,\alpha\, e_1\wedge e_p\wedge e_q\wedge e_r+\sum_{p<q<r}D^{134}_{pqr}e_2\wedge e_p\wedge e_q\wedge e_r\\
                    &+\sum_{p<q<r}D^{234}_{pqr}e_2\wedge e_p\wedge e_q\wedge e_r\\
                    =&D^{234}_{234}\,\alpha\,e_1\wedge e_2\wedge e_3\wedge e_4+D^{134}_{134}e_2\wedge e_1\wedge e_3\wedge e_4+D^{234}_{134}e_2\wedge e_1\wedge e_3\wedge e_4\\
                    =&D^{234}_{134}e_2\wedge e_1\wedge e_3\wedge e_4.
        \end{align*}
        Hence $[[r,r,r]]=0$ if and only if $D^{234}_{134}=a^{34}(a^{12}a^{43}+a^{32}a^{14}-a^{42}a^{13})=0$.
    \end{proof}

\subsection{The induced local cocycle 3-Lie bialgebras}
We now provide the local cocycle 3-Lie bialgebras induced from skew-symmetric solutions of the 3-Lie CYBE.

\begin{thm} \label{th:cop}
Let $A$ be a 3-Lie algebra with a
basis $\{ e_1,\cdots,e_n\}$. Let
$r=\sum\limits_{i,j}a^{ij}e_i\otimes e_j\in A\otimes A$.
Set  $\Delta=\Delta_1+\Delta_2+\Delta_3:
A\rightarrow A\otimes A\otimes A$, in which
$\Delta_1,\Delta_2,\Delta_3$ are induced by $r$ as in Eq.~\eqref{eq:delta123}.  Then
    \begin{equation}\label{delta1}
        \Delta(x)=\sum_{i<j}\sum_{p<q}(a^{ip}a^{jq}-a^{jp}a^{iq})[x,e_i,e_j]\wedge e_p\wedge e_q, \forall x\in A.
    \end{equation}
\end{thm}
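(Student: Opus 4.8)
The plan is to compute the three pieces $\Delta_1,\Delta_2,\Delta_3$ directly from the defining formulas in Eq.~\eqref{eq:delta123} and then symmetrize the summation indices using the skew-symmetry $a^{ij}=-a^{ji}$ of $r$, exactly as was done to simplify $[[r,r,r]]$ in the proof of Theorem~\ref{th:3.16}. The target formula Eq.~\eqref{delta1} expresses $\Delta(x)$ as a sum over ordered pairs $i<j$ and $p<q$ of the antisymmetrized coefficient $(a^{ip}a^{jq}-a^{jp}a^{iq})$ times a completely wedged expression $[x,e_i,e_j]\wedge e_p\wedge e_q$, so the whole point is to show that the three separate tensor placements of $[x,e_i,e_j]$ coming from $\Delta_1,\Delta_2,\Delta_3$ assemble into a single 3-fold wedge.

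First I would write out $\Delta_1(x)=\sum_{i,j}[x,e_i,e_j]\otimes(\sum_p a^{ip}e_p)\otimes(\sum_q a^{jq}e_q)$, using $y_i=\sum_p a^{ip}e_p$ from Eq.~\eqref{eq:3.2}, and similarly for $\Delta_2,\Delta_3$. Since $[x,e_i,e_j]$ is skew-symmetric in $i,j$, I would restrict the sum over $i,j$ to $i<j$, which replaces the coefficient $a^{ip}a^{jq}$ by the antisymmetrized $a^{ip}a^{jq}-a^{jp}a^{iq}$ (the diagonal $i=j$ contributes nothing because $[x,e_i,e_i]=0$). The same reduction on the $p,q$ summation produces a wedge $e_p\wedge e_q$: collecting the four terms with $p,q$ and $q,p$, the skew-symmetry of the coefficient in $(p,q)$ converts $\sum_{p,q}$ into $\sum_{p<q}$ against $e_p\wedge e_q$. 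This shows each $\Delta_m(x)$ carries the common scalar factor $c_{ij}^{pq}:=a^{ip}a^{jq}-a^{jp}a^{iq}$ summed over $i<j$, $p<q$.

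The key step is then to verify that the three tensor terms combine into a wedge in the \emph{first three} slots as well. Writing $w:=[x,e_i,e_j]$, the contributions are, up to the common scalar, of the form $w\otimes e_p\otimes e_q$ from $\Delta_1$, $e_p\otimes w\otimes e_q$ from $\Delta_2$, and $e_p\otimes e_q\otimes w$ from $\Delta_3$; but the placements of $y_i,y_j$ in Eq.~\eqref{eq:delta123} differ by transpositions, and tracking the induced sign on the antisymmetrized coefficient (exactly the $\phi_{pq}$ bookkeeping from the proof of Theorem~\ref{th:3.16}) shows that these three terms are precisely $w\wedge e_p\wedge e_q = \sum_{\sigma\in S_3}\mathrm{sgn}(\sigma)\,(\cdots)$ restricted to the three cyclic/transposition placements of $w$. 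Here I must be careful that the definitions place $y_j\otimes y_i$ (reversed order) in $\Delta_1$ and cyclically permuted reversals in $\Delta_2,\Delta_3$; reconciling those reversals with the $i<j,p<q$ ordering is where the signs have to be chased exactly, and that is the main obstacle.

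\textbf{The hard part} will be confirming that the signs from the index reversals in the definition of $\Delta_1,\Delta_2,\Delta_3$ line up so that the three placements of $[x,e_i,e_j]$ assemble into $[x,e_i,e_j]\wedge e_p\wedge e_q$ with the single coefficient $(a^{ip}a^{jq}-a^{jp}a^{iq})$ rather than three differently-signed coefficients. Once the sign bookkeeping is settled, the identity $\Delta_1(x)+\Delta_2(x)+\Delta_3(x)=\sum_{i<j}\sum_{p<q}(a^{ip}a^{jq}-a^{jp}a^{iq})[x,e_i,e_j]\wedge e_p\wedge e_q$ follows by grouping, which is Eq.~\eqref{delta1}. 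I expect no conceptual difficulty beyond this careful symmetrization, since it mirrors the reduction already carried out for $[[r,r,r]]$.
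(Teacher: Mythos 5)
Your proposal follows essentially the same route as the paper's proof --- expand $\Delta_1$ in the basis, antisymmetrize over $(i,j)$ using the skew-symmetry of the bracket to produce the coefficient $a^{ip}a^{jq}-a^{jp}a^{iq}$, fold the $(p,q)$-sum into $e_p\wedge e_q$, and then combine the three slot-placements of $[x,e_i,e_j]$ via the $\phi_{pq}$ operators; the sign chase you flag as the hard part is settled in the paper by observing that $\Delta_2$ and $\Delta_3$ are obtained from $\Delta_1$ by the two \emph{even} permutations $\phi_{13}\phi_{12}$ and $\phi_{12}\phi_{13}$, so summing the identity and these against $[x,e_i,e_j]\otimes(e_p\wedge e_q)$ yields exactly the six signed terms of $[x,e_i,e_j]\wedge e_p\wedge e_q$ with a single common coefficient. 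One small correction: the theorem does not assume $r$ skew-symmetric, and neither your steps nor the paper's actually use $a^{ij}=-a^{ji}$ --- all the antisymmetry you need comes from $[x,e_i,e_j]=-[x,e_j,e_i]$ and the resulting antisymmetry of $a^{ip}a^{jq}-a^{jp}a^{iq}$ in $(p,q)$ --- so that hypothesis should be dropped from your opening sentence.
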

    \begin{proof}
        By Eq.~(\ref{eq:delta123}), for any $x\in A$, we have
\begin{align*}
    \Delta_1(x)=&\sum_{i,j,p,q}a^{ip}a^{jq}[x,e_i,e_j]\otimes e_p\otimes e_q
        =\sum_{i\neq j}\sum_{p,q}a^{ip}a^{jq}[x,e_i,e_j]\otimes e_p\otimes e_q\\
        =&\sum_{i<j}\sum_{p,q}(a^{ip}a^{jq}[x,e_i,e_j]+a^{jp}a^{iq}[x,e_j,e_i])\otimes e_p\otimes e_q\\
        =&\sum_{i<j}\sum_{p,q}(a^{ip}a^{jq}-a^{jp}a^{iq})[x,e_i,e_j]\otimes e_p\otimes e_q
\end{align*}
Note that $a^{ip}a^{jq}-a^{jp}a^{iq}=0$ when $p=q$. Then
\begin{align*}
    \Delta_1(x)=&\sum_{i<j}\sum_{p\neq q}(a^{ip}a^{jq}-a^{jp}a^{iq})[x,e_i,e_j]\otimes e_p\otimes e_q\\
        =&\sum_{i<j}\sum_{p<q}\big((a^{ip}a^{jq}-a^{jp}a^{iq})[x,e_i,e_j]\otimes e_p\otimes e_q+(a^{iq}a^{jp}-a^{jq}a^{ip})[x,e_i,e_j]\otimes e_q\otimes e_p\big)\\
        =&\sum_{i<j}\sum_{p<q}(a^{ip}a^{jq}-a^{jp}a^{iq})[x,e_i,e_j]\otimes(e_p\otimes e_q-e_q\otimes e_p)\\
        =&\sum_{i<j}\sum_{p<q}(a^{ip}a^{jq}-a^{jp}a^{iq})[x,e_i,e_j]\otimes(e_p\wedge e_q).
\end{align*}
Due to Eq.~(\ref{eq:delta123}) again, we have
\begin{align*}
    \Delta_2(x)=\phi_{13}\phi_{12}\Delta_1(x),\;\;
    \Delta_3(x)=\phi_{12}\phi_{13}\Delta_1(x).
\end{align*}
Therefore
\begin{align*}
    \Delta(x)=&\Delta_1(x)+\Delta_2(x)+\Delta_3(x)\\
        =&\sum_{i<j}\sum_{p<q}(a^{ip}a^{jq}-a^{jp}a^{iq})(\phi_{11}+\phi_{13}\phi_{12}+\phi_{12}\phi_{13})[x,e_i,e_j]\otimes(e_p\wedge e_q)\\
        =&\sum_{i<j}\sum_{p<q}(a^{ip}a^{jq}-a^{jp}a^{iq})[x,e_i,e_j]\wedge e_p\wedge e_q.
\end{align*}
Hence the conclusion holds.
    \end{proof}
Combining Theorem~\ref{thm:ybe}, Theorem~\ref{th:cop} and the results in the previous subsection, we obtain the following conclusion on local cocycle 3-Lie bialgebras.

\begin{pro}
Let $A$ be a 3-Lie algebra with a
basis $\{ e_1,\cdots,e_n\}$. For
$r=\sum\limits_{i,j}a^{ij}e_i\otimes e_j\in A\otimes A$, denote
$$D^{ij}_{pq}:=a^{ip}a^{jq}-a^{jp}a^{iq},\;\;\forall i,j,p,q=1,\cdots, n.$$
Then every skew-symmetric solution of the 3-Lie CYBE in
the complex 3-Lie algebras in dimension 3 and 4 gives a local
cocycle 3-Lie bialgebra $(A,\Delta)$, where $\Delta$ is given by
the following formula.
\begin{enumerate}
\item[(1)] If $A$ is the 3-dimensional 3-Lie algebra in Proposition~\ref{th:2.3}, then
\begin{align*}
  \Delta(e_1)=D^{23}_{23}e_1\wedge e_2\wedge e_3,\;\;
  \Delta(e_2)=-D^{13}_{23}e_1\wedge e_2\wedge e_3,\;\;
  \Delta(e_3)=D^{12}_{23}e_1\wedge e_2\wedge e_3.
\end{align*}
\item[(2)] If $A$ is the 4-dimensional 3-Lie algebra of Case (1) in Proposition~\ref{th:2.4}, then
\begin{align*}
    \Delta(e_1)=&(D^{34}_{34}-D^{24}_{24}+D^{23}_{23})e_2\wedge e_3\wedge e_4+(D^{24}_{12}-D^{24}_{13})e_1\wedge e_2\wedge e_3\\
                &+(D^{23}_{12}-D^{34}_{14})e_1\wedge e_2\wedge e_4+(D^{23}_{13}-D^{24}_{14})e_1\wedge e_3\wedge e_4,\\
    \Delta(e_2)=&(D^{34}_{34}-D^{13}_{13}+D^{14}_{14})e_1\wedge e_3\wedge e_4+(D^{34}_{23}-D^{14}_{12})e_1\wedge e_2\wedge e_3\\
                &+(D^{34}_{24}-D^{13}_{12})e_1\wedge e_2\wedge e_4+(D^{14}_{24}-D^{13}_{23})e_2\wedge e_3\wedge e_4,\\
    \Delta(e_3)=&(D^{12}_{12}-D^{24}_{24}+D^{14}_{14})e_1\wedge e_2\wedge e_4+(D^{14}_{13}-D^{24}_{23})e_1\wedge e_2\wedge e_3\\
                &+(D^{12}_{13}-D^{24}_{34})e_1\wedge e_3\wedge e_4+(D^{12}_{23}-D^{14}_{34})e_2\wedge e_3\wedge e_4,\\
    \Delta(e_4)=&(D^{12}_{12}-D^{13}_{13}+D^{23}_{23})e_1\wedge e_2\wedge e_3+(D^{23}_{24}-D^{13}_{14})e_1\wedge e_2\wedge e_4\\
                &+(D^{23}_{34}-D^{12}_{14})e_1\wedge e_3\wedge e_4+(D^{13}_{34}-D^{12}_{24})e_2\wedge e_3\wedge e_4.
  \end{align*}
\item[(3)] If $A$ is the 4-dimensional 3-Lie algebra of Case~(2) in Proposition~\ref{th:2.4}, then
\begin{align*}
    \Delta(e_1)=&D^{23}_{23}e_1\wedge e_2\wedge e_3+D^{23}_{24}e_1\wedge e_2\wedge e_4+D^{23}_{34}e_1\wedge e_3\wedge e_4,\\
    \Delta(e_2)=&-D^{13}_{23}e_1\wedge e_2\wedge e_3-D^{13}_{24}e_1\wedge e_2\wedge e_4-D^{13}_{34}e_1\wedge e_3\wedge e_4,\\
    \Delta(e_3)=&D^{12}_{23}e_1\wedge e_2\wedge e_3+D^{12}_{24}e_1\wedge e_2\wedge e_4+D^{12}_{34}e_1\wedge e_3\wedge e_4,\\
    \Delta(e_4)=&0,
  \end{align*}
and the parameters satisfy an additional condition $a^{23}(a^{12}a^{34}-a^{13}a^{24}-a^{32}a^{14})=0$.
\item[(4)] If $A$ is the 4-dimensional 3-Lie algebra of Case~(3) in Proposition~\ref{th:2.4}, then
\begin{align*}
    \Delta(e_1)=&0,\\
    \Delta(e_2)=&D^{34}_{23}e_1\wedge e_2\wedge e_3+D^{34}_{24}e_1\wedge e_2\wedge e_4+D^{34}_{34}e_1\wedge e_3\wedge e_4,\\
    \Delta(e_3)=&-D^{24}_{23}e_1\wedge e_2\wedge e_3-D^{24}_{24}e_1\wedge e_2\wedge e_4-D^{24}_{34}e_1\wedge e_3\wedge e_4,\\
    \Delta(e_4)=&D^{23}_{23}e_1\wedge e_2\wedge e_3+D^{23}_{24}e_1\wedge e_2\wedge e_4+D^{23}_{34}e_1\wedge e_3\wedge e_4.
  \end{align*}
\item[(5)] If $A$ is the 4-dimensional 3-Lie algebra of Case~(4) in Proposition~\ref{th:2.4}, then
  \begin{align*}
    \Delta(e_1)=&-D^{34}_{13}e_1\wedge e_2\wedge e_3-D^{34}_{14}e_1\wedge e_2\wedge e_4+D^{34}_{34}e_2\wedge e_3\wedge e_4,\\
    \Delta(e_2)=&D^{34}_{23}e_1\wedge e_2\wedge e_3+D^{34}_{24}e_1\wedge e_2\wedge e_4+D^{34}_{34}e_1\wedge e_3\wedge e_4,\\
    \Delta(e_3)=&(D^{14}_{13}-D^{24}_{23})e_1\wedge e_2\wedge e_3+(D^{14}_{14}-D^{24}_{24})e_1\wedge e_2\wedge e_4\\
                &-D^{24}_{34}e_1\wedge e_3\wedge e_4-D^{14}_{34}e_2\wedge e_3\wedge e_4,\\
    \Delta(e_4)=&(D^{23}_{23}-D^{13}_{13})e_1\wedge e_2\wedge e_3+(D^{23}_{24}-D^{13}_{14})e_1\wedge e_2\wedge e_4\\
                &+D^{23}_{34}e_1\wedge e_3\wedge e_4-D^{13}_{34}e_2\wedge e_3\wedge e_4.
  \end{align*}
Here the parameters satisfy the condition $a^{34}(a^{12}a^{34}-a^{32}a^{14}+a^{42}a^{13})=0$.
 \item[(6)] If $A$ is the 4-dimensional 3-Lie algebra of Case~(5) in Proposition~\ref{th:2.4}, then
   \begin{align*}
   \Delta(e_1)=&D^{34}_{23}e_1\wedge e_2\wedge e_3+D^{34}_{24}e_1\wedge e_2\wedge e_4+D^{34}_{34}e_1\wedge e_3\wedge e_4,\\
   \Delta(e_2)=&-D^{34}_{13}e_1\wedge e_2\wedge e_3-D^{34}_{14}e_1\wedge e_2\wedge e_4+D^{34}_{34}e_2\wedge e_3\wedge e_4,\\
   \Delta(e_3)=&(D^{24}_{13}-D^{14}_{23})e_1\wedge e_2\wedge e_3+(D^{24}_{14}-D^{14}_{24})e_1\wedge e_2\wedge e_4\\
               &-D^{24}_{34}e_1\wedge e_3\wedge e_4-D^{14}_{34}e_2\wedge e_3\wedge e_4.\\
   \Delta(e_4)=&(D^{13}_{23}-D^{23}_{13})e_1\wedge e_2\wedge e_3+(D^{13}_{24}-D^{23}_{14})e_1\wedge e_2\wedge e_4\\
               &+D^{13}_{34}e_1\wedge e_3\wedge e_4+D^{23}_{34}e_2\wedge e_3\wedge e_4.
 \end{align*}
 \item[(7)] If $A$ is the 4-dimensional 3-Lie algebra of Case~(6) in Proposition~\ref{th:2.4}, then
\begin{align*}
    \Delta(e_1)=&-D^{34}_{13}e_1\wedge e_2\wedge e_3-D^{34}_{14}e_1\wedge e_2\wedge e_4+D^{34}_{34}e_2\wedge e_3\wedge e_4,\\
    \Delta(e_2)=&(\alpha D^{34}_{23}-D^{34}_{13})e_1\wedge e_2\wedge e_3+(\alpha D^{34}_{24}-D^{34}_{14})e_1\wedge e_2\wedge e_4\\
                &+\alpha D^{34}_{34}e_2\wedge e_3\wedge e_4+D^{34}_{34}e_2\wedge e_3\wedge e_4,\\
    \Delta(e_3)=&(D^{24}_{13}+D^{14}_{13}-\alpha D^{24}_{23})e_1\wedge e_2\wedge e_3+(D^{24}_{14}+D^{14}_{14}-\alpha D^{24}_{24})e_1\wedge e_2\wedge e_4\\
                &+\alpha D^{24}_{34}e_1\wedge e_3\wedge e_4-(D^{24}_{34}+D^{14}_{34})e_2\wedge e_3\wedge e_4,\\
    \Delta(e_4)=&(\alpha D^{23}_{23}-D^{23}_{13}-D^{13}_{13})e_1\wedge e_2\wedge e_3+(\alpha D^{23}_{24}-D^{23}_{14}-D^{12}_{14})e_1\wedge e_2\wedge e_4\\
                &+\alpha D^{23}_{34}e_1\wedge e_3\wedge e_4+(D^{23}_{34}+D^{13}_{34})e_2\wedge e_3\wedge e_4,
  \end{align*}
and the parameters satisfy the condition $a^{34}(a^{12}a^{43}+a^{32}a^{14}-a^{42}a^{13})=0$.
\item[(8)] If $A$ is the 4-dimensional 3-Lie algebra of Case~(7) in Proposition~\ref{th:2.4}, then
\begin{align*}
  \Delta(e_1)=&(D^{24}_{12}-D^{24}_{13})e_1\wedge e_2\wedge e_3+D^{34}_{41}e_1\wedge e_2\wedge e_4\\
              &-D^{24}_{14}e_1\wedge e_3\wedge e_4+(D^{34}_{34}-D^{24}_{24})e_2\wedge e_3\wedge e_4,\\
  \Delta(e_2)=&(D^{34}_{23}-D^{14}_{12})e_1\wedge e_2\wedge e_3-D^{34}_{24}e_1\wedge e_2\wedge e_4\\
              &+(D^{34}_{34}-D^{14}_{14})e_1\wedge e_3\wedge e_4+D^{14}_{24}e_2\wedge e_3\wedge e_4,\\
  \Delta(e_3)=&(D^{14}_{13}-D^{24}_{23})e_1\wedge e_2\wedge e_3+(D^{14}_{14}-D^{24}_{24})e_1\wedge e_2\wedge e_4\\
              &-D^{24}_{34}e_1\wedge e_3\wedge e_4-D^{14}_{34}e_2\wedge e_3\wedge e_4,\\
  \Delta(e_4)=&(D^{12}_{12}-D^{13}_{13}+D^{23}_{23})e_1\wedge e_2\wedge e_3+(D^{23}_{24}-D^{13}_{14})e_1\wedge e_2\wedge e_4\\
              &+(D^{23}_{34}-D^{12}_{14})e_1\wedge e_3\wedge e_4+(D^{13}_{34}-D^{12}_{24})e_2\wedge e_3\wedge e_4.
\end{align*}
\end{enumerate}
\end{pro}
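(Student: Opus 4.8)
The plan is to read the entire proposition as a bookkeeping consequence of two results already in hand, so that essentially no new conceptual work is required. First I would invoke Theorem~\ref{thm:ybe}: for each algebra, the element $r$ under consideration is a skew-symmetric solution of the 3-Lie CYBE, hence $(A,\Delta)$ with $\Delta=\Delta_1+\Delta_2+\Delta_3$ is automatically a local cocycle 3-Lie bialgebra. Thus none of the bialgebra axioms need to be verified directly; the whole task reduces to (i) recording which skew-symmetric $r$ actually solve the CYBE, and (ii) writing $\Delta$ in closed form on each basis vector.

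For step (i) I would simply quote the solution analysis of the previous subsection. By Theorem~\ref{th:dim3} every $r\in\wedge^2(A)$ solves the CYBE in dimension $3$, and by Theorem~\ref{th:dim4} the same holds for the four-dimensional algebras of Cases (1), (3), (4), (7); for these no constraint on the coefficients $a^{ij}$ is imposed. For the algebras of Cases (2), (5), (6), Theorem~\ref{th:3.17} provides the single scalar equation that $r$ must satisfy, and these are exactly the side conditions appended to the corresponding items of the proposition.

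For step (ii) the engine is the universal formula \eqref{delta1} of Theorem~\ref{th:cop},
\[
\Delta(x)=\sum_{i<j}\sum_{p<q}D^{ij}_{pq}\,[x,e_i,e_j]\wedge e_p\wedge e_q,\qquad D^{ij}_{pq}=a^{ip}a^{jq}-a^{jp}a^{iq}.
\]
I would set $x=e_m$ for each basis vector in turn and substitute the explicit ternary products listed in Propositions~\ref{th:2.3} and \ref{th:2.4}. Since each $[e_m,e_i,e_j]$ is $\pm e_t$ (or a short combination, as in Case~(6)), the inner double sum collapses to the few pairs $(i,j)$ with $[e_m,e_i,e_j]\neq 0$. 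For every surviving term I would reorder $e_t\wedge e_p\wedge e_q$ into the standard increasing basis of $\wedge^3(A)$, recording the sign of the permutation, discard any term in which $t,p,q$ are not all distinct (these vanish in $\wedge^3$), and gather the coefficients of each ordered $3$-vector. As a sample, in dimension $3$ one has $[e_2,e_1,e_3]=-e_1$, so
\[
\Delta(e_2)=-\sum_{p<q}D^{13}_{pq}\,e_1\wedge e_p\wedge e_q=-D^{13}_{23}\,e_1\wedge e_2\wedge e_3,
\]
matching item~(1).

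The hard part is purely organizational. For the highly symmetric algebras of Case~(1) and Case~(7) several brackets $[e_m,e_i,e_j]$ are nonzero simultaneously, so a single basis $3$-vector such as $e_2\wedge e_3\wedge e_4$ collects contributions from two or three distinct pairs $(i,j)$, each carrying its own reordering sign (for instance $e_4\wedge e_2\wedge e_3=+e_2\wedge e_3\wedge e_4$ while $e_3\wedge e_2\wedge e_4=-e_2\wedge e_3\wedge e_4$). Keeping these signs and overlapping contributions straight across all basis vectors is where errors are most likely; I would guard against this by always permuting the wedge factors into increasing order before summing, and by cross-checking the resulting coefficients against the antisymmetry relations among the $D^{ij}_{pq}$ supplied by Lemma~\ref{le:3.2}.
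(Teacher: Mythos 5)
Your proposal is correct and follows exactly the paper's route: the paper obtains this proposition by combining Theorem~\ref{thm:ybe} (so no bialgebra axioms need checking), Theorem~\ref{th:cop} (the closed formula for $\Delta$), and the solution analysis of Theorems~\ref{th:dim3}, \ref{th:dim4} and \ref{th:3.17} for the side conditions, then evaluates the formula case by case as you describe. Your sample computation for $\Delta(e_2)$ in dimension $3$ and your sign-bookkeeping strategy match what the paper leaves implicit.
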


\section{Double construction 3-Lie bialgebras and Manin triples}
\label{sec:dc}
In this section we classify double construction
3-Lie bialgebras for complex 3-Lie algebras in dimensions
3 and 4. We also give the corresponding Manin triples.

\subsection{The double construction 3-Lie bialgebras for complex 3-Lie algebras in dimensions 3 and 4}

\begin{pro}\label{pro:4.1}
Let $A$ be a 3-Lie algebra with a basis $\{e_1,\cdots, e_n\}$. Let $\Delta:A\rightarrow A\otimes A\otimes A$ be a linear map.  Set
$$[e_a,e_b,e_c]=\sum_k T_{abc}^{k}e_k,\;\; \Delta(e_i)=\sum_{p,q,r}C_i^{pqr}e_p\otimes e_q\otimes e_r,\;\;\forall a,b,c, i=1,\cdots, n.$$
\begin{enumerate}
\item[(1)] $\Delta$ satisfies Eq.~(\ref{eq:2.6}) if and only if the following equation holds:
\begin{equation}\label{eq:constant1}
\sum_i T_{abc}^{i}C_i^{pqr}=\sum_i \big(T_{bci}^{r}C_a^{pqi}+T_{cai}^{r}C_b^{pqi}+T_{abi}^{r}C_c^{pqi}\big),\;\;\forall p,q,r, a, b, c=1,\cdots, n.
\end{equation}
\item[(2)] $\Delta$ satisfies Eq.~(\ref{eq:2.7}) if and only if the following equation holds:
\begin{equation}\label{eq:constant2}
\sum_i T_{abc}^{i}C_i^{pqr}=\sum_i \big(T_{bci}^{r}C_a^{pqi}+T_{bci}^{q}C_a^{pir}+T_{bci}^{p}C_a^{iqr}\big), \;\;\forall  p,q,r, a, b, c=1,\cdots, n.
\end{equation}
\end{enumerate}
\end{pro}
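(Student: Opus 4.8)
The plan is to translate the two coordinate-free comultiplication identities \eqref{eq:2.6} and \eqref{eq:2.7} into scalar equations by expanding every tensor in the fixed basis $\{e_1,\dots,e_n\}$ and then matching coefficients of the basis tensors $e_p\otimes e_q\otimes e_r$ of $A^{\otimes 3}$. Since these basis tensors are linearly independent, the operator identity holds if and only if each coefficient equation holds, so the entire proof reduces to a careful bookkeeping of indices with no conceptual obstruction. I would prove parts (1) and (2) in turn, and in each case argue the two directions simultaneously through this coefficient-matching equivalence.

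For part (1), first I would compute the left-hand side of \eqref{eq:2.6}. Using $[e_a,e_b,e_c]=\sum_i T_{abc}^{i}e_i$ and $\Delta(e_i)=\sum_{p,q,r}C_i^{pqr}e_p\otimes e_q\otimes e_r$, linearity of $\Delta$ gives $\Delta([e_a,e_b,e_c])=\sum_{i}\sum_{p,q,r}T_{abc}^{i}C_i^{pqr}\,e_p\otimes e_q\otimes e_r$, so the coefficient of $e_p\otimes e_q\otimes e_r$ is $\sum_i T_{abc}^{i}C_i^{pqr}$. Next I would expand the right-hand side, where the crucial observation is that the operator $\id\otimes\id\otimes\mathrm{ad}_{b,c}$ acts only on the third tensor slot: applying it to $\Delta(e_a)=\sum_{p,q,i}C_a^{pqi}e_p\otimes e_q\otimes e_i$ replaces $e_i$ by $[e_b,e_c,e_i]=\sum_r T_{bci}^{r}e_r$, so the coefficient of $e_p\otimes e_q\otimes e_r$ in $(\id\otimes\id\otimes\mathrm{ad}_{b,c})\Delta(e_a)$ equals $\sum_i T_{bci}^{r}C_a^{pqi}$. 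The other two terms contribute $\sum_i T_{cai}^{r}C_b^{pqi}$ and $\sum_i T_{abi}^{r}C_c^{pqi}$ by the same computation with the roles of $a,b,c$ cyclically permuted, matching the convention $\mathrm{ad}_{z,x}$ and $\mathrm{ad}_{x,y}$ in \eqref{eq:2.6}. Equating coefficients yields exactly \eqref{eq:constant1}.

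For part (2), the left-hand side of \eqref{eq:2.7} is identical to that of \eqref{eq:2.6}, so its coefficient is again $\sum_i T_{abc}^{i}C_i^{pqr}$. The difference is on the right, where now the same operator $\mathrm{ad}_{b,c}$ is inserted into each of the three tensor slots of the \emph{single} element $\Delta(e_a)$. Acting on the third slot gives $\sum_i T_{bci}^{r}C_a^{pqi}$ as before; acting on the second slot replaces the summation index $q$ by an internal index and produces $\sum_i T_{bci}^{q}C_a^{pir}$; acting on the first slot produces $\sum_i T_{bci}^{p}C_a^{iqr}$. Summing these three contributions and equating with the left-hand coefficient gives \eqref{eq:constant2}. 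In both parts the only point requiring care is keeping the slot into which $\mathrm{ad}_{b,c}$ acts aligned with which free index ($p$, $q$, or $r$) is being contracted against an internal summation index; I expect this index-tracking to be the sole place where a sign or position error could creep in, but it is mechanical rather than deep. Since the passage from the operator identity to the coefficient identity is an equivalence (basis tensors being linearly independent), each ``if and only if'' follows at once.
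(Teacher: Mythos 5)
Your proposal is correct and is exactly the argument the paper intends: the paper's proof is the one-line remark that the result follows by straightforward computation of Eqs.~(\ref{eq:2.6}) and (\ref{eq:2.7}) and comparison of coefficients, which is precisely the basis expansion and coefficient matching you carry out. Your index bookkeeping for all three slots (and the cyclic permutation of $a,b,c$ in part (1) versus the fixed $\mathrm{ad}_{b,c}$ moving through the slots in part (2)) checks out against both displayed identities.
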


\begin{proof}
It is obtained by a straightforward computation of Eqs.~(\ref{eq:2.6}) and (\ref{eq:2.7}) followed by comparing the coefficients.
\end{proof}

With the conditions and notation as above, let $\{f_1,f_2,\cdots,f_n\}$ be the dual basis of $A^{\ast}$ and $\Delta^*:A^*\otimes A^*\otimes A^*\rightarrow A^*$ be the dual map. Then a direct computation shows that
\begin{equation}\label{def.c}
\Delta^{\ast}(f_p\otimes f_q\otimes f_r)=\sum_i C_i^{pqr}f_i,\;\;\forall p, q, r=1,\cdots, n.
\end{equation}
If in addition, $\Delta^{\ast}$ defines a 3-Lie algebra structure on $A^{\ast}$, then $\Delta$ is a skew-symmetric linear map, i.e., for any permutation $\ttau$ on $\{p,q,r\}$,
\begin{equation}\label{c}
C_i^{\ttau(p)\ttau(q)\ttau(r)} =\textrm{sgn}(\ttau)C_i^{pqr},\;\;\forall p,q,r=1,\cdots,n.
\end{equation}
\begin{lem}\label{le:4.1}
With the notations as above. If Eq.~(\ref{c}) holds, then $\Delta$ satisfies Eq.~(\ref{eq:2.6}) if and only if Eq.~(\ref{eq:constant1}) holds for any $p,q,r$ and $a<b<c$.
\end{lem}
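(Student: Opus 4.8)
The plan is to combine Proposition~\ref{pro:4.1}(1) with a symmetry reduction in the indices $a,b,c$. By Proposition~\ref{pro:4.1}(1), $\Delta$ satisfies Eq.~(\ref{eq:2.6}) if and only if Eq.~(\ref{eq:constant1}) holds for \emph{all} $a,b,c,p,q,r=1,\dots,n$. The forward implication of the lemma is then immediate, since requiring the equation for all triples $a,b,c$ in particular forces it for the ordered triples $a<b<c$. The whole content therefore lies in the converse: showing that once Eq.~(\ref{eq:constant1}) is imposed only for $a<b<c$ (and all $p,q,r$), it automatically holds for every triple $a,b,c$, after which Proposition~\ref{pro:4.1}(1) closes the argument.

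To carry this out I would fix $p,q,r$ and regard Eq.~(\ref{eq:constant1}) as an identity in the triple $(a,b,c)$, writing its defect as
\[
E(a,b,c):=\sum_i T_{abc}^{i}C_i^{pqr}-\sum_i\big(T_{bci}^{r}C_a^{pqi}+T_{cai}^{r}C_b^{pqi}+T_{abi}^{r}C_c^{pqi}\big).
\]
The key step is to prove that $E$ is skew-symmetric in $(a,b,c)$, i.e. $E(\tau(a),\tau(b),\tau(c))=\mathrm{sgn}(\tau)E(a,b,c)$ for every $\tau\in S_3$. Since the $3$-Lie bracket is skew-symmetric, the structure constants satisfy $T^{i}_{\tau(a)\tau(b)\tau(c)}=\mathrm{sgn}(\tau)T^{i}_{abc}$, so the first summand $\sum_i T_{abc}^{i}C_i^{pqr}$ is visibly skew in $(a,b,c)$. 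For the second summand I would check the two generating transpositions: interchanging $a\leftrightarrow b$ turns the three terms $T_{bci}^{r}C_a^{pqi},\,T_{cai}^{r}C_b^{pqi},\,T_{abi}^{r}C_c^{pqi}$ into $T_{aci}^{r}C_b^{pqi},\,T_{cbi}^{r}C_a^{pqi},\,T_{bai}^{r}C_c^{pqi}$, and applying $T_{aci}^r=-T_{cai}^r$, $T_{cbi}^r=-T_{bci}^r$, $T_{bai}^r=-T_{abi}^r$ recovers the original three terms with overall sign $-1$; the transposition $b\leftrightarrow c$ is handled identically. As $(a\,b)$ and $(b\,c)$ generate $S_3$, this establishes the full skew-symmetry of $E$.

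With skew-symmetry in hand the reduction is automatic. If two of $a,b,c$ coincide, applying the transposition fixing them gives $E=-E$, so $E(a,b,c)=0$. If $a,b,c$ are distinct, let $a'<b'<c'$ be their increasing rearrangement and $\tau$ the permutation with $(\tau(a'),\tau(b'),\tau(c'))=(a,b,c)$; then $E(a,b,c)=\mathrm{sgn}(\tau)\,E(a',b',c')=0$ by the hypothesis that Eq.~(\ref{eq:constant1}) holds for ordered triples. Hence $E\equiv 0$ for all $a,b,c$ and all $p,q,r$, and Proposition~\ref{pro:4.1}(1) yields Eq.~(\ref{eq:2.6}).

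I expect the only genuine obstacle to be the bookkeeping in the transposition check of the second summand; everything else is formal. Note that the hypothesis Eq.~(\ref{c}) (skew-symmetry of $\Delta$ in its three tensor slots) guarantees we are in the setting where $\Delta^{\ast}$ is an honest $3$-Lie bracket, but the reduction itself rests only on the skew-symmetry of the structure constants $T^{i}_{abc}$; in particular the indices $p,q,r$ play a spectator role throughout.
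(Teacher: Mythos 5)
Your proof is correct and takes essentially the same route as the paper: the paper likewise checks on a transposition that both sides of Eq.~(\ref{eq:constant1}) flip sign under permutations of $(a,b,c)$ (using the skew-symmetry $T^{i}_{\tau(a)\tau(b)\tau(c)}=\mathrm{sgn}(\tau)T^{i}_{abc}$) and that the equation holds trivially when two of $a,b,c$ coincide, which your skew-symmetric defect $E(a,b,c)$ merely packages into a single statement before invoking Proposition~\ref{pro:4.1}(1). Your side observation that Eq.~(\ref{c}) is never used in this reduction is also accurate: the paper's proof too rests only on the skew-symmetry of the $T^{i}_{abc}$ (its stated justification for the coincident-index case, $T_{aci}^{r}C_a^{pqi}=-T_{aci}^{q}C_a^{pir}$, appears to be a typo for the cancellation $T^{r}_{aci}=-T^{r}_{cai}$ that you spell out correctly).
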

    \begin{proof}
       At first, we claim that the following two equations are equivalent when $p,q,r$ are fixed.
        \begin{align*}
            \sum_i T_{a_1b_1c_1}^{i}C_i^{pqr}&=\sum_i \big(T_{b_1c_1i}^{r}C_{a_1}^{pqi}+T_{c_1a_1i}^{r}C_{b_1}^{pqi}+T_{a_1b_1i}^{r}C_{c_1}^{pqi}\big),\\
            \sum_i T_{a_2b_2c_2}^{i}C_i^{pqr}&=\sum_i \big(T_{b_2c_2i}^{r}C_{a_2}^{pqi}+T_{c_2a_2i}^{r}C_{b_2}^{pqi}+T_{a_2b_2i}^{r}C_{c_2}^{pqi}\big),
        \end{align*}
        where $a_2,b_2,c_2$ are obtained by permuting $a_1,b_1,c_1$. In fact, without loss of generality, we assume $a_1=b_2,b_1=a_2,c_1=c_2$. Then
        \begin{align*}
            &\sum_i T_{a_1b_1c_1}^{i}C_i^{pqr}=\sum_i \big(T_{b_1c_1i}^{r}C_{a_1}^{pqi}+T_{c_1a_1i}^{r}C_{b_1}^{pqi}+T_{a_1b_1i}^{r}C_{c_1}^{pqi}\big)\\
            \Leftrightarrow &\sum_i T_{b_2a_2c_2}^{i}C_i^{pqr}=\sum_i \big(T_{a_2c_2i}^{r}C_{b_2}^{pqi}+T_{c_2b_2i}^{r}C_{a_2}^{pqi}+T_{b_2a_2i}^{r}C_{c_2}^{pqi}\big)\\
            \Leftrightarrow &\sum_i (-T_{a_2b_2c_2}^{i})C_i^{pqr}=\sum_i \big((-T_{c_2a_2i}^{r})C_{b_2}^{pqi}+(-T_{b_2c_2i}^{r})C_{a_2}^{pqi}+(-T_{a_2b_2i}^{r})C_{c_2}^{pqi}\big)\\
            \Leftrightarrow &\sum_i T_{a_2b_2c_2}^{i}C_i^{pqr}=\sum_i \big(T_{c_2a_2i}^{r}C_{b_2}^{pqi}+T_{b_2c_2i}^{r}C_{a_2}^{pqi}+T_{a_2b_2i}^{r}C_{c_2}^{pqi}\big).
        \end{align*}
        Furthermore, if any two of $a,b,c$ are equal, then Eq.~(\ref{eq:constant1}) holds automatically. In fact, assume $a=b$ without loss of generality. The left hand side of Eq.~(\ref{eq:constant1}) is zero because $T^i_{aac}=0$, whereas the right hand side is also zero because $T_{aci}^{r}C_a^{pqi}=-T_{aci}^{q}C_a^{pir}$ and $T_{aai}^{r}=0$.  Therefore the indices $a,b,c$ should be selected distinct and the sequence of $a,b,c$ makes no difference. Hence the lemma holds.
    \end{proof}

\begin{thm}\label{th:4.1}
Let $A$ be the 3-dimensional 3-Lie algebra given in
Proposition~\ref{th:2.3}. If a skew-symmetric linear map
$\Delta:A\rightarrow A\otimes A\otimes A$  satisfies
Eq.~(\ref{eq:2.6}) or Eq.~(\ref{eq:constant1}),  then $\Delta=0$.
Therefore there is no non-trivial double construction 3-Lie bialgebra for $A$.
\end{thm}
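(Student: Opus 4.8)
The plan is to exploit the interplay between the skew-symmetry of $\Delta$ and the extreme rigidity of the exterior algebra in dimension three, reducing the statement to a short finite computation. Since $\Delta^\ast$ is assumed to define a $3$-Lie algebra structure on $A^\ast$, Eq.~\eqref{c} holds, so $\Delta$ is totally skew-symmetric and each $\Delta(e_i)$ lies in $\wedge^3(A)$. As $\dim A=3$, the space $\wedge^3(A)$ is one-dimensional, spanned by $e_1\wedge e_2\wedge e_3$; hence there are scalars $c_1,c_2,c_3$ with $\Delta(e_i)=c_i\,e_1\wedge e_2\wedge e_3$, equivalently $C_i^{pqr}=c_i\,\mathrm{sgn}(p,q,r)$, where $\mathrm{sgn}(p,q,r)$ is the sign of the permutation carrying $(1,2,3)$ to $(p,q,r)$ and vanishes when two indices coincide. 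The whole theorem then reduces to showing $c_1=c_2=c_3=0$.

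Next I would pass from Eq.~\eqref{eq:2.6} to its coordinate form via Proposition~\ref{pro:4.1}(1), and then invoke Lemma~\ref{le:4.1}: under Eq.~\eqref{c} it suffices to verify Eq.~\eqref{eq:constant1} for all $p,q,r$ and for the single ordered triple $a<b<c$, which in dimension three forces $(a,b,c)=(1,2,3)$. For this $3$-Lie algebra the only nonzero structure constants are those obtained from $T^1_{123}=1$ by antisymmetrizing the indices $(1,2,3)$, so in particular the cyclic images satisfy $T^1_{231}=T^1_{312}=1$. Substituting these into Eq.~\eqref{eq:constant1} collapses the left-hand side to $C_1^{pqr}$ and the right-hand side to $\delta_{r,1}\big(C_1^{pq1}+C_2^{pq2}+C_3^{pq3}\big)$, leaving the single family of scalar constraints
\begin{equation*}
C_1^{pqr}=\delta_{r,1}\big(C_1^{pq1}+C_2^{pq2}+C_3^{pq3}\big),\qquad \forall\, p,q,r.
\end{equation*}

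Finally I would feed the ansatz $C_i^{pqr}=c_i\,\mathrm{sgn}(p,q,r)$ into this identity and specialize the free indices. Choosing $r\neq 1$ makes the right-hand side vanish while the left-hand side equals $c_1\,\mathrm{sgn}(p,q,r)$; the choice $(p,q,r)=(1,3,2)$ then forces $c_1=0$. With $c_1=0$, setting $r=1$ reduces the constraint to $c_2\,\mathrm{sgn}(p,q,2)+c_3\,\mathrm{sgn}(p,q,3)=0$ for all $p,q$, and the substitutions $(p,q)=(1,3)$ and $(p,q)=(1,2)$ yield $c_2=0$ and $c_3=0$ respectively. Hence $\Delta=0$, so there is no nontrivial double construction $3$-Lie bialgebra on $A$. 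I expect the only delicate point to be the sign bookkeeping when antisymmetrizing the bracket $[e_1,e_2,e_3]=e_1$ — one must record that the cyclic images $T^1_{231}$ and $T^1_{312}$ equal $+1$ rather than $-1$ — but beyond this the argument is a direct finite verification with no genuine obstacle.
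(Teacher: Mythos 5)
Your proposal is correct and follows essentially the same route as the paper: both reduce via Lemma~\ref{le:4.1} to Eq.~\eqref{eq:constant1} with $(a,b,c)=(1,2,3)$, obtain the constraints $C_1^{pqr}=0$ for $r\neq 1$ together with $C_2^{pq2}+C_3^{pq3}=0$, and finish by total skew-symmetry of the superscripts. Your only presentational difference is parametrizing $\Delta(e_i)=c_i\,e_1\wedge e_2\wedge e_3$ at the outset using $\dim\wedge^3(A)=1$, which the paper leaves implicit in its final sentence.
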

    \begin{proof}
     With the notations as in Proposition~\ref{pro:4.1}. Fix $p,q,r$. Then by lemma~\ref{le:4.1}, we only need to consider the following three equations.
        \begin{align*}
            \sum_i T_{123}^{i}C_i^{pq1}&=\sum_i \big(T_{23i}^{1}C_{1}^{pqi}+T_{31i}^{1}C_{2}^{pqi}+T_{12i}^{1}C_{3}^{pqi}\big),\\
            \sum_i T_{123}^{i}C_i^{pq2}&=\sum_i \big(T_{23i}^{2}C_{1}^{pqi}+T_{31i}^{2}C_{2}^{pqi}+T_{12i}^{2}C_{3}^{pqi}\big),\\
            \sum_i T_{123}^{i}C_i^{pq3}&=\sum_i \big(T_{23i}^{3}C_{1}^{pqi}+T_{31i}^{3}C_{2}^{pqi}+T_{12i}^{3}C_{3}^{pqi}\big).
        \end{align*}
        \quad Simplifying those equations, we have
        \begin{align*}
            C_2^{pq2}+C_3^{pq3}=0,\;\; C_1^{pq2}=0,\;\; C_1^{pq3}=0.
        \end{align*}
        Since $p,q,r$ are chosen arbitrarily and $\Delta$ is skew-symmetric, this shows that $C_i^{abc}=0$, $\forall i,a,b,c=1,2,3$, i.e., $\Delta=0$.
    \end{proof}

\begin{lem}\label{th:3}  For any 3-Lie algebra in dimension 4, as displayed in Proposition~\ref{th:2.4}, the skew-symmetric linear map $\Delta:A\rightarrow A\otimes A\otimes
A$ satisfying Eq.~(\ref{eq:constant1}) is given as
follows (all the parameters are arbitrary constants).
\begin{enumerate}
\item[(1)] If $A$ is the 4-dimensional 3-Lie algebra of Case~(1) in Proposition~\ref{th:2.4}, then
\begin{equation}\label{re:1}
  \begin{cases}
    &\Delta(e_1)=ke_2\wedge e_3\wedge e_4,\\
    &\Delta(e_2)=ke_1\wedge e_3\wedge e_4,\\
    &\Delta(e_3)=ke_1\wedge e_2\wedge e_4,\\
    &\Delta(e_4)=ke_1\wedge e_2\wedge e_3.
  \end{cases}
\end{equation}
\item[(2)] If $A$ is the 4-dimensional 3-Lie algebra of Case~(2) in Proposition~\ref{th:2.4}, then
\begin{equation}\label{re:2}
\begin{cases}
\Delta(e_1)=\Delta(e_4)=0,\\
\Delta(e_2)=ke_1\wedge e_2\wedge e_4+c_1e_1\wedge e_3\wedge e_4,\\
\Delta(e_3)=-ke_1\wedge e_3\wedge e_4+c_2e_1\wedge e_2\wedge e_4.
\end{cases}
\end{equation}
\item[(3)] If $A$ is the 4-dimensional 3-Lie algebra of Case~(3) in Proposition~\ref{th:2.4}, then
\begin{equation}\label{re:3}
\begin{cases}
\Delta(e_1)=0,\\
\Delta(e_2)=k_1e_1\wedge e_2\wedge e_3+k_2e_1\wedge e_2 \wedge e_4+c_1e_1\wedge e_3 \wedge e_4,\\
\Delta(e_3)=k_3e_1\wedge e_2\wedge e_3-k_2e_1\wedge e_3 \wedge e_4+c_2e_1\wedge e_2 \wedge e_4,\\
\Delta(e_4)=-k_3e_1\wedge e_2\wedge e_4+k_1e_1\wedge e_3 \wedge e_4+c_3e_1\wedge e_2\wedge e_3.
\end{cases}
\end{equation}
\item[(4)] If $A$ is the 4-dimensional 3-Lie algebra of Case~(4) in Proposition~\ref{th:2.4}, then
\begin{equation}\label{re:4}
\begin{cases}
\Delta(e_1)=\Delta(e_2)=0,\\
\Delta(e_3)=ke_1\wedge e_2\wedge e_3+c_1e_1\wedge e_2\wedge e_4,\\
\Delta(e_4)=-ke_1\wedge e_3\wedge e_4+c_2e_1\wedge e_2\wedge e_3.
\end{cases}
\end{equation}
\item[(5)] If $A$ is the 4-dimensional 3-Lie algebra of Case~(5) in Proposition~\ref{th:2.4}, then
\begin{equation}\label{re:5}
\begin{cases}
\Delta(e_1)=\Delta(e_2)=0,\\
\Delta(e_3)=ke_1\wedge e_2\wedge e_3+c_1e_1\wedge e_2\wedge e_4,\\
\Delta(e_4)=-ke_1\wedge e_2\wedge e_4+c_2e_1\wedge e_2\wedge e_3.
\end{cases}
\end{equation}
\item[(6)] If $A$ is the 4-dimensional 3-Lie algebra of Case~(6) in Proposition~\ref{th:2.4}, then
\begin{equation}\label{re:6}
\begin{cases}
\Delta(e_1)=\Delta(e_2)=0,\\
\Delta(e_3)=ke_1\wedge e_2\wedge e_3+c_1e_1\wedge e_2\wedge e_4,\\
\Delta(e_4)=-ke_1\wedge e_2\wedge e_4+c_2e_1\wedge e_2\wedge e_3.
\end{cases}
\end{equation}
\item[(7)] If $A$ is the 4-dimensional 3-Lie algebra of Case~(7) in Proposition~\ref{th:2.4}, then
\begin{equation}\label{re:7}
\begin{cases}
\Delta(e_1)=\Delta(e_2)=\Delta(e_3)=0,\\
\Delta(e_4)=ce_1\wedge e_2\wedge e_3.
\end{cases}
\end{equation}
\end{enumerate}
\end{lem}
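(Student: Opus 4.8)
The plan is to analyze the seven algebras of Proposition~\ref{th:2.4} one at a time, in each case turning the defining relation~(\ref{eq:constant1}) into an explicit finite linear system in the coordinates $C_i^{pqr}$ of $\Delta$ and then solving it. Two reductions set up the computation. First, by Lemma~\ref{le:4.1} it is enough to impose~(\ref{eq:constant1}) for all $p,q,r$ but only for the four ordered triples $a<b<c$, namely $(1,2,3),(1,2,4),(1,3,4),(2,3,4)$. Second, the skew-symmetry~(\ref{c}) says exactly that each $\Delta(e_i)$ lies in $\wedge^3(A)$; since $\dim\wedge^3(A)=4$, each $\Delta(e_i)$ is determined by its four components along $e_1\wedge e_2\wedge e_3$, $e_1\wedge e_2\wedge e_4$, $e_1\wedge e_3\wedge e_4$, $e_2\wedge e_3\wedge e_4$, so there are sixteen unknowns in all.

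For a given algebra I would substitute its nonzero structure constants $T_{abc}^k$ (read off from Proposition~\ref{th:2.4}, together with the values obtained from them by permuting indices) into~(\ref{eq:constant1}). Since only a few $T_{abc}^k$ are nonzero, the left side $\sum_i T_{abc}^i C_i^{pqr}$ collapses to a single term, and the sums on the right survive only for the few $i$ and $r$ permitted by the brackets. Two kinds of constraint then appear: a triple $(a,b,c)$ whose bracket is nonzero relates certain ``diagonal'' coordinates, while a triple whose bracket vanishes (left side $\equiv 0$) forces certain coordinates to be zero. For instance, in Case~(2) the only nonzero constant is $T_{123}^1=1$ (and its antisymmetrizations); the triple $(1,2,3)$ gives $C_1^{pqr}=0$ for $r\neq 1$ (whence $\Delta(e_1)=0$ by~(\ref{c})) together with the single scalar relation $C_2^{pq2}+C_3^{pq3}=0$, while the triples $(1,2,4),(1,3,4),(2,3,4)$ force all coordinates of $\Delta(e_4)$ to vanish. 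Unpacking $C_2^{pq2}+C_3^{pq3}=0$ over all $p,q$ by means of~(\ref{c}) then kills $C_2^{123},C_2^{234},C_3^{123},C_3^{234}$, imposes $C_2^{124}+C_3^{134}=0$, and leaves $C_2^{134},C_3^{124}$ free, which reassembles precisely into~(\ref{re:2}). The same procedure, applied verbatim, produces~(\ref{re:1}) and~(\ref{re:3})--(\ref{re:7}); it mirrors, in dimension $4$, the $3$-dimensional computation carried out in the proof of Theorem~\ref{th:4.1}.

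The difficulty is not conceptual but lies in the volume of index- and sign-bookkeeping, which is heaviest where the most brackets are nonzero. The extreme case is Case~(1), the simple $4$-dimensional algebra, in which every triple of distinct indices has a nonzero bracket; here all four choices of $(a,b,c)$ contribute and the sixteen unknowns are genuinely coupled, the solution forcing every off-diagonal coordinate to vanish and the four complementary coordinates $C_i^{jkl}$ (with $\{i,j,k,l\}=\{1,2,3,4\}$ and $j<k<l$) to equal a single scalar $k$, which is~(\ref{re:1}). Case~(6), which carries the extra parameter $\alpha$, and Case~(7) demand the same care in tracking how the parameter interacts with the antisymmetry of both $T$ and $C$. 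In every case one must also keep straight, when reading off components in $\wedge^3(A)$, that a scalar relation such as $C_2^{pq2}+C_3^{pq3}=0$ holding for all $p,q$ encodes several distinct component identities; that the resulting solution is consistent under all permutations of $a,b,c$ is guaranteed in advance by the equivalence established at the start of the proof of Lemma~\ref{le:4.1}.
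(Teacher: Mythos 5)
Your proposal is correct and follows essentially the same route as the paper: reduce to the triples $a<b<c$ via Lemma~\ref{le:4.1}, substitute the structure constants of each algebra into Eq.~(\ref{eq:constant1}) to get a linear system in the sixteen coordinates $C_i^{pqr}$ ($p<q<r$), and solve; your worked example (Case~(2)) and your description of the coupled system in Case~(1) both agree with the paper, which writes out Case~(1) in full and declares the rest similar.
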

    \begin{proof}
        We give an explicit proof for the Case~(1) as an example and we omit the proofs for the other cases since the proofs are similar.
        Fix $p,q,r$. Then by Lemma~\ref{le:4.1}, we only need to consider Eq.~(\ref{eq:constant1}) whose indices $(r,a,b,c)$ are given by the following quadruples.
        \begin{equation}
        (i,1,2,3),~~~ (i,1,2,4), ~~~ (i,1,3,4),~~~ (i,2,3,4), ~~~ 1\leq i\leq 4.
        \label{eq:rabc}
        \end{equation}

    Let $A$ be the 3-Lie algebra of Case~(1). For the quadruples $(i,1,2,3), 1\leq i\leq 4$, we have
        \begin{align}
            (r,a,b,c)=(1,1,2,3):\;\;\;&C^{pq1}_4=C^{pq4}_1,\label{delta1.1}\\
            (r,a,b,c)=(2,1,2,3):\;\;\;&C^{pq2}_4=-C^{pq4}_2,\label{delta1.2}\\
            (r,a,b,c)=(3,1,2,3):\;\;\;&C^{pq3}_4=C^{pq4}_3,\label{delta1.3}\\
            (r,a,b,c)=(4,1,2,3):\;\;\;&C^{pq4}_4=C^{pq1}_1+C^{pq2}_2+C^{pq3}_3.\label{delta1.4}
        \end{align}
 By Eq.~(\ref{delta1.1}), we obtain
 $$C^{124}_4=C^{241}_4=C^{244}_1=0\;\;{\rm and}\;\; C^{134}_4=C^{341}_4=C^{344}_1=0.$$
 By Eq.~(\ref{delta1.2}), we obtain $$C^{234}_4=C^{342}_4=C^{344}_2=0.$$
 Hence $C^{pq4}_4=0$, $\forall p,q=1,2,3,4$.

 Similarly, for the other rows of equations, we show that
 $$C^{pq3}_3=0,\;\;C^{pq2}_2=0,\;\; C^{pq1}_1=0,\;\; \forall p,q=1,2,3,4.$$
 That is, $C^{pqr}_i=0$ if any one of $p,q,r$ equals $i$. What remain unknown in $\{C^{abc}_i|a<b<c\}$ are $C^{123}_4$, $C^{124}_3$, $C^{134}_2$ and $C^{234}_1$. By Eq.~(\ref{delta1.1}) again, we obtain
 $$C^{123}_4=C^{231}_4=C^{234}_1.$$
 By Eq.~(\ref{delta1.2}), we obtain
 $$C^{123}_4=-C^{132}_4=C^{134}_2.$$
 By Eq.~(\ref{delta1.3}), we obtain
 $$C^{123}_4=C^{124}_3.$$ Therefore
        \begin{equation}\label{C}
            C^{123}_4=C^{124}_3=C^{134}_2=C^{234}_1.
        \end{equation}
Furthermore, it is straightforward to check that Eq.~(\ref{C}) satisfies all the 16 equations in Eq.~(\ref{eq:rabc}).  Therefore  $\Delta$ is determined explicitly in Eq.~(\ref{re:1}) by taking $C^{123}_4=k$.
    \end{proof}

\begin{lem}\label{delta2}
    Let $A$ be a complex 3-Lie algebra with a basis $\{e_1,e_2,\cdots,e_n\}$. Let $p,q,r,s,t$ be fixed indexes, and $m_1,m_2,m_3,m_4\in \mathbb C$. Assume that
    $$\mathrm{ad}_{e_s,e_t}(e_p)=m_1e_p+m_4e_q,~\mathrm{ad}_{e_s,e_t}(e_q)=m_2e_q,~\mathrm{ad}_{e_s,e_t}(e_r)=m_3e_r,$$
    and set
    $$\Phi_{e_s,e_t}=\id\otimes \id \otimes \mathrm{ad}_{e_s,e_t}+\id\otimes \mathrm{ad}_{e_s,e_t}\otimes \id +\mathrm{ad}_{e_s,e_t}\otimes \id \otimes \id .$$
    Then
    \begin{equation}\label{Phi}
        \Phi_{e_s,e_t}(e_p\wedge e_q\wedge e_r)=(m_1+m_2+m_3)e_p\wedge e_q\wedge e_r.
    \end{equation}
\end{lem}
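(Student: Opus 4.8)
The plan is to recognize $\Phi_{e_s,e_t}$ as the canonical extension of the single linear operator $D := \mathrm{ad}_{e_s,e_t} \in \mathfrak{gl}(A)$ to a derivation on the triple tensor product $A \otimes A \otimes A$. Writing $D$ for this operator throughout, the three summands defining $\Phi_{e_s,e_t}$ apply $D$ to the third, the second, and the first tensor slot respectively, which is exactly the Leibniz-rule action of $D$ on $A^{\otimes 3}$. Note that this extension is a purely tensorial construction valid for \emph{any} endomorphism of $A$, so we need not invoke that $\mathrm{ad}_{e_s,e_t}$ is a derivation of the 3-bracket.

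First I would record the elementary but crucial fact that, since $\Phi_{e_s,e_t}$ treats all three tensor positions in the same symmetric way, it commutes with the $S_3$-action permuting the factors, and therefore preserves the antisymmetric subspace $\wedge^3(A) \subseteq A^{\otimes 3}$. Consequently $\Phi_{e_s,e_t}$ acts on a wedge by the usual Leibniz rule, giving
$$\Phi_{e_s,e_t}(e_p \wedge e_q \wedge e_r) = (D e_p) \wedge e_q \wedge e_r + e_p \wedge (D e_q) \wedge e_r + e_p \wedge e_q \wedge (D e_r).$$
If one prefers to avoid invoking this general principle, the same identity follows by a direct computation: expand $e_p \wedge e_q \wedge e_r = \sum_{\sigma \in S_3} \mathrm{sgn}(\sigma)\, e_{\sigma(p)} \otimes e_{\sigma(q)} \otimes e_{\sigma(r)}$ in the convention fixed above, apply each of the three summands of $\Phi_{e_s,e_t}$ termwise, and re-collect the signed sums into wedges.

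Next I would substitute the three hypotheses $D e_p = m_1 e_p + m_4 e_q$, $D e_q = m_2 e_q$ and $D e_r = m_3 e_r$ into this Leibniz expansion. The only term requiring attention is the off-diagonal contribution $m_4\, e_q \wedge e_q \wedge e_r$ arising from $D e_p$; this vanishes because $e_q \wedge e_q = 0$. After this cancellation every surviving term is a scalar multiple of $e_p \wedge e_q \wedge e_r$, and collecting the coefficients yields $(m_1 + m_2 + m_3)\, e_p \wedge e_q \wedge e_r$, which is Eq.~(\ref{Phi}).

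I do not expect any genuine obstacle here, since the content of the lemma is entirely formal. The only two points that demand (minor) care are justifying that $\Phi_{e_s,e_t}$ descends to a Leibniz action on $\wedge^3(A)$ --- that is, that it commutes with antisymmetrization --- and noticing the cancellation $e_q \wedge e_q = 0$ that discards the $m_4$ term. Everything else reduces to a one-line substitution.
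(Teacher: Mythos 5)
Your proof is correct, and it is organized differently from the paper's. The paper argues by splitting into the four sub-cases in which exactly one of $m_1,m_2,m_3,m_4$ is nonzero (Eqs.~(\ref{Phim1})--(\ref{Phim4})), and in each case it expands the wedge into explicit tensors, applies the three summands of $\Phi_{e_s,e_t}$ by hand, and re-collects the surviving terms; linearity then assembles the general statement. You instead isolate the structural fact that $\Phi_{e_s,e_t}$ is the Leibniz extension of the single operator $D=\mathrm{ad}_{e_s,e_t}$ to $A^{\otimes 3}$, observe that it commutes with the $S_3$-action on the tensor factors and hence restricts to $\wedge^3(A)$ with the usual derivation rule $\Phi(x\wedge y\wedge z)=Dx\wedge y\wedge z+x\wedge Dy\wedge z+x\wedge y\wedge Dz$, and then finish by a one-line substitution in which the $m_4$ cross-term dies because $e_q\wedge e_q\wedge e_r=0$. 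This buys brevity and generality: your argument works verbatim for any endomorphism $D$ and any triple wedge, and it replaces the paper's fourth case (the explicit verification that the $m_4$ contribution cancels, Eqs.~(\ref{Phi4})--(\ref{Phim4})) with an immediate appeal to antisymmetry. The paper's version, by contrast, is entirely self-contained and requires no general principle about equivariance of $\Phi_{e_s,e_t}$, at the cost of four parallel tensor computations. The one point you should not gloss over if writing this up is the equivariance claim itself; your fallback of expanding $e_p\wedge e_q\wedge e_r$ as the signed sum over $S_3$ and re-collecting is exactly the right way to make it airtight.
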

    \begin{proof}
        Assume $m_2=m_3=m_4=0$. Then
        \begin{align}
            \id\otimes \id \otimes \mathrm{ad}_{e_s,e_t}(e_p\wedge e_q\wedge e_r)&=\id\otimes \id \otimes \mathrm{ad}_{e_s,e_t}(e_q\otimes e_r\otimes e_p-e_r\otimes e_q\otimes e_p)\nonumber\\
                &=m_1(e_q\otimes e_r\otimes e_p-e_r\otimes e_q\otimes e_p),\label{Phi1}\\
            \id\otimes \mathrm{ad}_{e_s,e_t}\otimes \id (e_p\wedge e_q\wedge e_r)&=\id\otimes \mathrm{ad}_{e_s,e_t}\otimes \id (e_r\otimes e_p\otimes e_q-e_q\otimes e_p\otimes e_r)\nonumber\\
                &=m_1(e_r\otimes e_p\otimes e_q-e_q\otimes e_p\otimes e_r),\label{Phi2}\\
            \mathrm{ad}_{e_s,e_t}\otimes \id \otimes \id (e_p\wedge e_q\wedge e_r)&=\mathrm{ad}_{e_s,e_t}\otimes \id \otimes \id (e_p\otimes e_q\otimes e_r-e_p\otimes e_r\otimes e_q)\nonumber\\
                &=m_1(e_p\otimes e_q\otimes e_r-e_p\otimes e_r\otimes e_q).\label{Phi3}
        \end{align}
 Adding Eqs.~(\ref{Phi1}) -- (\ref{Phi3}) together, we have
        \begin{equation}\label{Phim1}
            \Phi_{e_s,e_t}(e_p\wedge e_q\wedge e_r)=m_1e_p\wedge e_q\wedge e_r.
        \end{equation}
        Assume $m_1=m_3=m_4=0$. Then
        \begin{equation}\label{Phim2}
            \Phi_{e_s,e_t}(e_p\wedge e_q\wedge e_r)=\Phi_{e_s,e_t}(-e_q\wedge e_p\wedge e_r)=-m_2e_q\wedge e_p\wedge e_r=m_2e_p\wedge e_q\wedge e_r.
        \end{equation}
        Assume $m_1=m_2=m_4=0$. Then
        \begin{equation}\label{Phim3}
            \Phi_{e_s,e_t}(e_p\wedge e_q\wedge e_r)=\Phi_{e_s,e_t}(e_r\wedge e_p\wedge e_q)=m_3e_r\wedge e_p\wedge e_q=m_3e_p\wedge e_q\wedge e_r.
        \end{equation}
        Assume $m_1=m_2=m_3=0$. Then
        \begin{align}
            \id\otimes \id \otimes \mathrm{ad}_{e_s,e_t}(e_p\wedge e_q\wedge e_r)&=\id\otimes \id \otimes \mathrm{ad}_{e_s,e_t}(e_q\otimes e_r\otimes e_p-e_r\otimes e_q\otimes e_p)\nonumber\\
                &=m_4(e_q\otimes e_r\otimes e_q-e_r\otimes e_q\otimes e_q),\label{Phi4}\\
            \id\otimes \mathrm{ad}_{e_s,e_t}\otimes \id (e_p\wedge e_q\wedge e_r)&=\id\otimes \mathrm{ad}_{e_s,e_t}\otimes \id (e_r\otimes e_p\otimes e_q-e_q\otimes e_p\otimes e_r)\nonumber\\
                &=m_4(e_r\otimes e_q\otimes e_q-e_q\otimes e_q\otimes e_r),\label{Phi5}\\
            \mathrm{ad}_{e_s,e_t}\otimes \id \otimes \id (e_p\wedge e_q\wedge e_r)&=\mathrm{ad}_{e_s,e_t}\otimes \id \otimes \id (e_p\otimes e_q\otimes e_r-e_p\otimes e_r\otimes e_q)\nonumber\\
                &=m_4(e_q\otimes e_q\otimes e_r-e_q\otimes e_r\otimes e_q).\label{Phi6}
        \end{align}
Adding Eqs.~(\ref{Phi4}) -- (\ref{Phi6}) together, we have
        \begin{equation}\label{Phim4}
            \Phi_{e_s,e_t}(e_p\wedge e_q\wedge e_r)=0.
        \end{equation}
        Since $\Phi_{e_s,e_t}$ is linear, Eqs.~(\ref{Phim1}), (\ref{Phim2}), (\ref{Phim3}) and (\ref{Phim4}) together indicate that Eq.~(\ref{Phi}) holds.
    \end{proof}

\begin{thm}
Let $A$ be one of the 4-dimensional 3-Lie algebras of Cases~(2), (5) and (6) given in Proposition~\ref{th:2.4}. Then any  double construction 3-Lie bialgebra for $A$ is trivial.
\end{thm}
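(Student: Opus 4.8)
The plan is to combine the two conditions that define a double construction $3$-Lie bialgebra. By Definition~\ref{doulbe}, such a $\Delta$ is skew-symmetric (this is Eq.~\eqref{c}, forced by $\Delta^*$ being a $3$-Lie bracket) and satisfies both Eq.~\eqref{eq:2.6} and Eq.~\eqref{eq:2.7}. Lemma~\ref{th:3} has already classified the skew-symmetric maps obeying Eq.~\eqref{eq:2.6} (equivalently Eq.~\eqref{eq:constant1}), so for Cases~(2), (5) and~(6) the map $\Delta$ is pinned down up to the free parameters in Eqs.~\eqref{re:2}, \eqref{re:5} and~\eqref{re:6}. It therefore remains to impose the second condition Eq.~\eqref{eq:2.7} and to show that it forces every one of these parameters to vanish, i.e. $\Delta=0$.

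The observation that drives the argument is that the right-hand side of Eq.~\eqref{eq:2.7} is precisely the operator $\Phi_{y,z}$ of Lemma~\ref{delta2}, so that Eq.~\eqref{eq:2.7} reads $\Delta([x,y,z])=\Phi_{y,z}(\Delta(x))$ for all $x,y,z\in A$. I would then evaluate this identity on cleverly chosen triples having a \emph{repeated} entry, so that $[x,y,z]=0$ and the left-hand side is $\Delta(0)=0$, while the right-hand side is a nonzero scalar multiple of $\Delta(x)$. Concretely, in Case~(2) I would take $(y,z)=(e_2,e_3)$ with $x\in\{e_2,e_3\}$, and in Cases~(5) and~(6) I would take $(y,z)=(e_3,e_4)$ with $x\in\{e_3,e_4\}$; in each case the bracket has a repeated index and vanishes.

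The computational core is evaluating $\Phi_{y,z}(\Delta(x))$ on the explicit wedges furnished by Lemma~\ref{th:3}. In Cases~(2) and~(5) the operator $\mathrm{ad}_{y,z}$ is diagonal on the basis: $\mathrm{ad}_{e_2,e_3}$ scales $e_1$ by $1$ and kills $e_2,e_3,e_4$, while $\mathrm{ad}_{e_3,e_4}$ scales both $e_1$ and $e_2$ by $1$ and kills $e_3,e_4$. Since every wedge occurring in $\Delta(x)$ carries $e_1$ (and in Case~(5) also $e_2$) as a factor, Lemma~\ref{delta2} applies verbatim and gives $\Phi_{e_2,e_3}(\Delta(x))=\Delta(x)$ in Case~(2) and $\Phi_{e_3,e_4}(\Delta(x))=2\,\Delta(x)$ in Case~(5). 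As these scalars, $1$ and $2$, are nonzero, equating with the vanishing left-hand side yields $\Delta(x)=0$ for the relevant generators, hence $\Delta=0$.

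The only step needing extra care, and the main obstacle, is Case~(6), where $\mathrm{ad}_{e_3,e_4}$ is \emph{not} diagonal: it sends $e_1\mapsto e_2$ and $e_2\mapsto\alpha e_1+e_2$, mixing $e_1$ and $e_2$, so Lemma~\ref{delta2} does not apply literally. Here I would compute $\Phi_{e_3,e_4}$ directly through its derivation action on $\wedge^3 A$, noting that all cross terms produce wedges with a repeated factor (such as $e_2\wedge e_2\wedge e_3$ and $\alpha\,e_1\wedge e_1\wedge e_3$) and hence vanish; what survives is the trace of $\mathrm{ad}_{e_3,e_4}$ on $\mathrm{span}\{e_1,e_2\}$, namely $1$, times the wedge. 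Thus once more $\Phi_{e_3,e_4}(\Delta(x))=\Delta(x)$ with nonzero scalar, and the same repeated-index trick forces $\Delta=0$, completing all three cases.
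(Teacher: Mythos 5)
Your proposal is correct and follows essentially the same route as the paper: restrict to the $\Delta$'s classified by Lemma~\ref{th:3}, then substitute triples with a repeated entry into Eq.~\eqref{eq:2.7} so that the left side vanishes while Lemma~\ref{delta2} (or, in Case~(6), a direct derivation computation) shows the right side is a nonzero scalar multiple of $\Delta(x)$. Your explicit handling of Case~(6), where $\mathrm{ad}_{e_3,e_4}$ is not diagonal and Lemma~\ref{delta2} does not apply verbatim, is a point the paper passes over silently, and your trace computation there is correct.
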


    \begin{proof}
        By Lemma~\ref{th:3}, we need to show that for the mentioned cases, if in addition $\Delta$ satisfies Eq.~(\ref{eq:2.7}), then $\Delta=0$.

        Case (2): Substituting $x=e_2,y=e_2,z=e_3$ into Eq.~(\ref{eq:2.7}), we get
        \begin{align*}
            0=\Phi_{e_2,e_3}(\Delta(e_2))=&\Phi_{e_2,e_3}(ke_1\wedge e_2\wedge e_4+c_1e_1\wedge e_3\wedge e_4)
                =ke_1\wedge e_2\wedge e_4+c_1e_1\wedge e_3\wedge e_4.
        \end{align*}
Hence $k=c_1=0$.  Substituting $x=e_3,y=e_2,z=e_3$ into Eq.~(\ref{eq:2.7}), we get
       \begin{align*}
            0=\Phi_{e_2,e_3}(\Delta(e_3))
                =\Phi_{e_2,e_3}(-ke_1\wedge e_3\wedge e_4+c_2e_1\wedge e_2\wedge e_4)
                =-ke_1\wedge e_3\wedge e_4+c_2e_1\wedge e_2\wedge e_4.
       \end{align*}
Hence $c_2=0$. Therefore $\Delta=0$.

       Case (5): Substituting $x=e_3,y=e_3,z=e_4$ into Eq.~(\ref{eq:2.7}), we get
       \begin{align*}
            0=\Phi_{e_3,e_4}(\Delta(e_3))
                =\Phi_{e_3,e_4}(ke_1\wedge e_2\wedge e_3+c_1e_1\wedge e_2\wedge e_4)
                =2ke_1\wedge e_2\wedge e_3+2c_1e_1\wedge e_2\wedge e_4.
       \end{align*}
 Hence $k=c_1=0$.   Substituting $x=e_4,y=e_3,z=e_4$ into Eq.~(\ref{eq:2.7}), we get
       \begin{align*}
            0=\Phi_{e_3,e_4}(\Delta(e_4))
                =\Phi_{e_3,e_4}(-ke_1\wedge e_2\wedge e_4+c_2e_1\wedge e_2\wedge e_3)
                =-2ke_1\wedge e_2\wedge e_4+2c_2e_1\wedge e_2\wedge e_3.
       \end{align*}
Hence $c_2=0$. Therefore $\Delta=0$.

       Case (6): Substituting $x=e_3,y=e_3,z=e_4$ into Eq.~(\ref{eq:2.7}), we get
       \begin{align*}
            0=\Phi_{e_3,e_4}(\Delta(e_3))
                =\Phi_{e_3,e_4}(ke_1\wedge e_2\wedge e_3+c_1e_1\wedge e_2\wedge e_4)
                =ke_1\wedge e_2\wedge e_3+c_1e_1\wedge e_2\wedge e_4.
       \end{align*}
       Hence $k=c_1=0$. Substituting $x=e_4,y=e_3,z=e_4$ into Eq.~(\ref{eq:2.7}), we get
       \begin{align*}
            0=\Phi_{e_3,e_4}(\Delta(e_4))
                =\Phi_{e_3,e_4}(-ke_1\wedge e_2\wedge e_4+c_2e_1\wedge e_2\wedge e_3)
                =-ke_1\wedge e_2\wedge e_4+c_2e_1\wedge e_2\wedge e_3.
       \end{align*}
       Hence $c_2=0$. Therefore $\Delta=0$.
    \end{proof}

\begin{lem}\label{C2}
     With the notations as in Proposition~\ref{pro:4.1}. If Eq.~ (\ref{c}) holds, then $\Delta$ satisfies Eq.~ (\ref{eq:2.7}) if and only if Eq.~(\ref{eq:constant2}) holds for any $a$ and $b<c,~p<q<r$.
\end{lem}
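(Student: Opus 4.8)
The plan is to mirror the reduction carried out in Lemma~\ref{le:4.1}, but now exploiting the symmetry of Eq.~(\ref{eq:2.7}) in the pair $(y,z)$ together with the \emph{alternating} dependence of Eq.~(\ref{eq:constant2}) on the upper indices $(p,q,r)$. By Proposition~\ref{pro:4.1}(2), $\Delta$ satisfies Eq.~(\ref{eq:2.7}) if and only if Eq.~(\ref{eq:constant2}) holds for all indices $p,q,r,a,b,c$; so the only content of the lemma is to show that, under Eq.~(\ref{c}), validity of Eq.~(\ref{eq:constant2}) for $a$ arbitrary, $b<c$ and $p<q<r$ already forces it for all indices. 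One direction is trivial, and I would spend the proof on the reduction in the other direction.

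First I would reduce the lower pair $(b,c)$. Both sides of Eq.~(\ref{eq:constant2}) change sign under the interchange $b\leftrightarrow c$: the left-hand side because $T^{i}_{abc}=-T^{i}_{acb}$ by skew-symmetry of the 3-Lie bracket, and each summand on the right because $T_{bci}$ is skew in $b,c$. Hence when $b=c$ both sides vanish (since $T^{i}_{abb}=0$ and $T_{bbi}=0$), so the equation is trivial, and for $b\neq c$ the instance $(a,c,b)$ is exactly $-1$ times the instance $(a,b,c)$, so the two are equivalent. This leaves only $b<c$, with $a$ unconstrained.

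The main step is the reduction of the upper triple $(p,q,r)$, and this is where I expect the real work to lie. I would show that, granting Eq.~(\ref{c}), both sides of Eq.~(\ref{eq:constant2}) are alternating in $(p,q,r)$, i.e.\ are multiplied by $\mathrm{sgn}(\sigma)$ under any $\sigma\in S_3$ permuting $p,q,r$. For the left-hand side this is immediate from Eq.~(\ref{c}) applied to $C_i^{pqr}$. For the right-hand side it is cleanest to observe that its $(p,q,r)$-component is exactly that of $\Phi_{e_b,e_c}(\Delta(e_a))$, in the notation of Lemma~\ref{delta2} (indeed Eq.~(\ref{eq:constant2}) is just Eq.~(\ref{eq:2.7}) written in coordinates with $x=e_a,\,y=e_b,\,z=e_c$). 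Since $\Phi_{e_b,e_c}=\mathrm{ad}_{e_b,e_c}\otimes\id\otimes\id+\id\otimes\mathrm{ad}_{e_b,e_c}\otimes\id+\id\otimes\id\otimes\mathrm{ad}_{e_b,e_c}$ is a ``total derivation'' operator, it commutes with the $S_3$-action permuting the three tensor legs (conjugation by $\sigma$ merely permutes its three summands) and therefore preserves $\wedge^3(A)$; as $\Delta(e_a)\in\wedge^3(A)$ by Eq.~(\ref{c}), so is $\Phi_{e_b,e_c}(\Delta(e_a))$, which forces its coefficients to be alternating. Alternatively one can verify this directly on the transpositions $(p\,q)$ and $(q\,r)$, using Eq.~(\ref{c}) to reorder the upper indices of the factors $C_a^{pqi},C_a^{pir},C_a^{iqr}$; the three summands then get permuted among themselves with matching signs. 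The sign bookkeeping in this direct check is the one slightly delicate point, and the $\Phi$-argument is meant to bypass it.

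Finally I would assemble the two reductions. If two of $p,q,r$ coincide, both sides of Eq.~(\ref{eq:constant2}) vanish by the alternating property, so the equation holds automatically; if $p,q,r$ are distinct, the instance for any ordering equals $\mathrm{sgn}(\sigma)$ times the instance for the sorted triple $p<q<r$, hence is equivalent to it. Combined with the $(b,c)$-reduction, every instance of Eq.~(\ref{eq:constant2}) is, up to an overall sign, either trivial or an instance with $a$ arbitrary, $b<c$ and $p<q<r$. Therefore Eq.~(\ref{eq:constant2}) for this restricted index range is equivalent to its validity for all indices, i.e.\ to Eq.~(\ref{eq:2.7}), which is the assertion of the lemma.
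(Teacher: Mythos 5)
Your proof is correct and follows the route the paper intends: the paper's own ``proof'' of this lemma is a one-line reference to the argument of Lemma~\ref{le:4.1}, and your write-up is exactly that reduction carried out in full --- the $b\leftrightarrow c$ sign flip on both sides of Eq.~\eqref{eq:constant2} disposes of the lower indices, and the alternating property in $(p,q,r)$ (which you justify both via Eq.~\eqref{c} together with the fact that $\Phi_{e_b,e_c}$ commutes with the $S_3$-action on the tensor legs, and by direct checking on transpositions) disposes of the upper ones. You correctly note the one asymmetry with Lemma~\ref{le:4.1}, namely that $a$ cannot be symmetrized with $b,c$ here, which is why the lemma leaves $a$ unrestricted.
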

    \begin{proof}
    It follows from a proof similar to the one for Lemma~\ref{le:4.1}.
    \end{proof}

\begin{thm}\label{th:1111} Let $A$ be a 4-dimensional 3-Lie algebra with a
basis $\{ e_1,\cdots,e_4\}$.
\begin{enumerate}
\item[(1)] If $A$ is the 4-dimensional 3-Lie algebra of Case~(1)
given in Proposition~\ref{th:2.4}, then $(A,\Delta)$ is a double
construction 3-Lie bialgebra, where $\Delta$ is
given by Eq.~(\ref{re:1}).
\item[(2)] If $A$ is the 4-dimensional
3-Lie algebra of Case~(3) given in Proposition~\ref{th:2.4}, then
$(A,\Delta)$ is a double construction 3-Lie bialgebra, where $\Delta$ is given by Eq.~(\ref{re:3}).
\item[(3)] If $A$ is the 4-dimensional 3-Lie algebra of Case~(4)
given in Proposition~\ref{th:2.4}, then $(A,\Delta)$ is a double
construction 3-Lie bialgebra, where $\Delta$ is
given by Eq.~(\ref{re:4}). \item[(4)] If $A$ is the 4-dimensional
3-Lie algebra of Case~(7) given in Proposition~\ref{th:2.4}, then
$(A,\Delta)$ is a double construction 3-Lie
bialgebra, where $\Delta$ is given by Eq.~(\ref{re:7}).
\end{enumerate}
\end{thm}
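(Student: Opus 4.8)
The plan is to verify directly the three requirements packaged in Definition~\ref{doulbe}. Each $\Delta$ under consideration is, by Lemma~\ref{th:3}, a skew-symmetric map (so Eq.~\eqref{c} holds) satisfying Eq.~\eqref{eq:constant1}; hence Lemma~\ref{le:4.1} already gives that $\Delta$ satisfies Eq.~\eqref{eq:2.6}. Thus only two points remain: that $\Delta$ satisfies Eq.~\eqref{eq:2.7}, and that $\Delta^{\ast}$ equips $A^{\ast}$ with a genuine $3$-Lie algebra structure. I would dispose of both case by case, exploiting that each algebra of Cases~(1),(3),(4),(7) has only a handful of nonzero structure constants $T^{m}_{ijk}$.

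For Eq.~\eqref{eq:2.7}, I would invoke Lemma~\ref{C2} to reduce the verification to Eq.~\eqref{eq:constant2} for all $p<q<r$ and all $a$, $b<c$, equivalently to the identity $\Delta([e_a,e_b,e_c])=\Phi_{e_b,e_c}(\Delta(e_a))$ for every basis triple. The right-hand side is computed by noting that $\Phi_{e_b,e_c}$ acts on $A^{\otimes 3}$ as the natural extension of the derivation $\mathrm{ad}_{e_b,e_c}$, so on each wedge component of $\Delta(e_a)$ it gives $\mathrm{ad}_{e_b,e_c}(e_p)\wedge e_q\wedge e_r+e_p\wedge \mathrm{ad}_{e_b,e_c}(e_q)\wedge e_r+e_p\wedge e_q\wedge \mathrm{ad}_{e_b,e_c}(e_r)$. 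Whenever $\mathrm{ad}_{e_b,e_c}$ acts on $\{e_p,e_q,e_r\}$ in the triangular fashion of Lemma~\ref{delta2}, that lemma evaluates the component as the relevant trace times the wedge, and in the present algebras this trace is $0$, so such contributions vanish; the surviving terms are exactly those in which $\mathrm{ad}_{e_b,e_c}$ carries some $e_p,e_q,e_r$ outside its own span, and these must be expanded by hand and matched against $\Delta([e_a,e_b,e_c])$. I expect this matching of the ``off-support'' contributions to be the main obstacle, since it is precisely there that the antisymmetrization can fail to cancel and where any additional parameter relations, if present, are forced.

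Finally, to see that $\Delta^{\ast}$ is a $3$-Lie algebra I would read off the dual structure constants from Eq.~\eqref{def.c}, so that $[f_p,f_q,f_r]^{\ast}=\sum_i C_i^{pqr}f_i$, and check the Fundamental Identity for the resulting bracket on $A^{\ast}$. For Case~(1) one finds $[f_i,f_j,f_k]^{\ast}=k\,f_l$ with $\{i,j,k,l\}=\{1,2,3,4\}$, so $A^{\ast}$ is a rescaling of $A$ itself and the identity is immediate. For Cases~(3),(4),(7) every wedge occurring in $\Delta$ contains $e_1$, hence every nonzero dual bracket contains $f_1$; the Fundamental Identity then collapses to the Jacobi identity of the induced binary bracket $[x,y]:=[f_1,x,y]^{\ast}$ on the three-dimensional complement $\mathrm{span}(f_2,f_3,f_4)$, a single finite computation in each case. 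Combining Eq.~\eqref{eq:2.6}, Eq.~\eqref{eq:2.7}, and this last check shows that $(A,\Delta)$ is a double construction $3$-Lie bialgebra; equivalently, by Theorem~\ref{thm:relations}, that $A\oplus A^{\ast}$ equipped with the bracket~\eqref{eq:formularAA*} is a $3$-Lie algebra.
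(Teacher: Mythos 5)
Your proposal is correct and follows essentially the same route as the paper: both reduce Eq.~(\ref{eq:2.7}) to the finite family of identities singled out by Lemma~\ref{C2} and verify them case by case against the maps $\Delta$ of Lemma~\ref{th:3}, the paper doing so in raw structure constants while you organize the same check via the derivation action of $\Phi_{e_b,e_c}$ on wedges together with Lemma~\ref{delta2}. The one point where you go beyond the paper is the verification that $\Delta^{\ast}$ defines a $3$-Lie algebra on $A^{\ast}$, which the paper dismisses as straightforward but which you settle cleanly by observing that in Case~(1) the dual bracket is a rescaling of the original one, while in Cases~(3), (4), (7) every nonzero dual bracket involves $f_1$ and takes values in $\mathrm{span}(f_2,f_3,f_4)$, so the Fundamental Identity collapses to the Jacobi identity for the binary bracket $[f_1,\cdot,\cdot]^{\ast}$.
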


\begin{proof}
In fact, for the 4-dimensional 3-Lie algebras of Cases~(1), (3), (4) and (7),
 the corresponding $\Delta$ appearing in Lemma~\ref{th:3} satisfies  Eq.~(\ref{eq:2.7}), too. We give an explicit proof for the Case~(1) as an example and we omit the proofs for the other cases since they are similar. For the 3-Lie algebra of Case~(1), fix $a,b,c$. By Lemma~\ref{C2}, we  only need to consider othe following four equations.
    \begin{align}
        &(p,q,r)=(1,2,3):&\sum_i T_{abc}^{i}C_i^{123}=\sum_i \big(T_{bci}^{3}C_a^{12i}+T_{bci}^{2}C_a^{1i3}+T_{bci}^{1}C_a^{i23}\big),\label{pqr1.1}\\
        &(p,q,r)=(1,2,4):&\sum_i T_{abc}^{i}C_i^{124}=\sum_i \big(T_{bci}^{4}C_a^{12i}+T_{bci}^{2}C_a^{1i4}+T_{bci}^{1}C_a^{i24}\big),\label{pqr1.2}\\
        &(p,q,r)=(1,3,4):&\sum_i T_{abc}^{i}C_i^{134}=\sum_i \big(T_{bci}^{4}C_a^{13i}+T_{bci}^{3}C_a^{1i4}+T_{bci}^{1}C_a^{i34}\big),\label{pqr1.3}\\
        &(p,q,r)=(2,3,4):&\sum_i T_{abc}^{i}C_i^{234}=\sum_i \big(T_{bci}^{4}C_a^{23i}+T_{bci}^{3}C_a^{2i4}+T_{bci}^{2}C_a^{i34}\big).\label{pqr1.4}
    \end{align}
For Eq.~(\ref{pqr1.1}), the left hand side is $$\sum_iT_{abc}^iC_i^{123}=T_{abc}^4,$$
whereas the right hand side is
    \begin{align*}
        &\sum_i \big(T_{bci}^{3}C_a^{12i}+T_{bci}^{2}C_a^{1i3}+T_{bci}^{1}C_a^{i23}\big)\\
        =&T_{bc3}^{3}C_a^{123}+T_{bc4}^{3}C_a^{124}+T_{bc2}^{2}C_a^{123}+T_{bc4}^{2}C_a^{143}+T_{bc1}^{1}C_a^{123}+T_{bc4}^{1}C_a^{423}\\
        =&\big(T^3_{bc3}+T^2_{bc2}+T^1_{bc1}\big)C^{123}_a+T_{bc4}^{3}C_a^{124}-T_{bc4}^{2}C_a^{134}+T_{bc4}^{1}C_a^{234}\\
        =&0+T_{bc4}^{3}C_a^{124}-T_{bc4}^{2}C_a^{134}+T_{bc4}^{1}C_a^{234}.
    \end{align*}
Therefore,  Eq.~(\ref{pqr1.1}) holds if and only if the following series of equations hold:
    \begin{align*}
        a=1:~~&T_{1bc}^4=0+0+T_{bc4}^1,\\
        a=2:~~&T_{2bc}^4=0-T_{bc4}^2+0,\\
        a=3:~~&T_{3bc}^4=T_{bc4}^3+0+0,\\
        a=4:~~&0=0.
    \end{align*}
It is straightforward to  show that these equations hold for arbitrary $b,c=1,2,3,4$. Hence Eq.~(\ref{pqr1.1}) holds. Similarly, Eqs.~(\ref{pqr1.2}) -- (\ref{pqr1.4}) also hold. Therefore Eq.~(\ref{eq:2.7}) holds.

 Moreover, it is straightforward to check (also see the remark after this proof) that, for every $\Delta$ appearing in the conclusion, the dual $\Delta^*$ defines a 3-Lie algebra on $A^*$. Hence the conclusion holds.
 \end{proof}

\begin{rmk}{\rm We give explicitly the 3-Lie algebra structure on the dual space $A^*$ obtained from $\Delta$ in the above double construction 3-Lie bialgebras as follows.
\begin{enumerate}
\item[(1)]  $A$ is the 4-dimensional 3-Lie algebra of Case~(1) given in Proposition~\ref{th:2.4}.
\begin{equation*}
  [e^{\ast}_1,e^{\ast}_2,e^{\ast}_3]^{\ast}=ke^{\ast}_4,~
  [e^{\ast}_1,e^{\ast}_2,e^{\ast}_4]^{\ast}=ke^{\ast}_3,~
  [e^{\ast}_1,e^{\ast}_3,e^{\ast}_4]^{\ast}=ke^{\ast}_2,~
  [e^{\ast}_2,e^{\ast}_3,e^{\ast}_4]^{\ast}=ke^{\ast}_1.
\end{equation*}
\item[(2)] $A$ is the 4-dimensional 3-Lie algebra of Case~(3) given in Proposition~\ref{th:2.4}.
\begin{align*}
  ~[e^{\ast}_1,e^{\ast}_2,e^{\ast}_3]^{\ast}&=k_1e^{\ast}_2+k_3e^{\ast}_3+c_3e^{\ast}_4,\\
  ~[e^{\ast}_1,e^{\ast}_2,e^{\ast}_4]^{\ast}&=k_2e^{\ast}_2+c_2e^{\ast}_3-k_3e^{\ast}_4,\\
  ~[e^{\ast}_1,e^{\ast}_3,e^{\ast}_4]^{\ast}&=c_1e^{\ast}_2-k_2e^{\ast}_3+k_1e^{\ast}_4.
\end{align*}
\item[(3)] $A$ is the 4-dimensional 3-Lie algebra of Case~(4) given in Proposition~\ref{th:2.4}.
\begin{equation*}
  [e^{\ast}_1,e^{\ast}_2,e^{\ast}_3]^{\ast}=ke^{\ast}_3+c_2e^{\ast}_4,~
  [e^{\ast}_1,e^{\ast}_2,e^{\ast}_4]^{\ast}=c_1e^{\ast}_3-ke^{\ast}_4.
\end{equation*}
\item[(4)] $A$ is the 4-dimensional 3-Lie algebra of Case~(7) given in Proposition~\ref{th:2.4}.
\begin{equation*}
[e^{\ast}_1,e^{\ast}_2,e^{\ast}_3]^{\ast}=ce^{\ast}_4.
\end{equation*}
\end{enumerate}
It is easy to show that $(A^{\ast},[\cdot,\cdot,\cdot]^{\ast})$ in
Case~(1) for $k\ne 0$ and in Case~(7) for $c\ne 0$ are
respectively isomorphic to the 3-Lie algebras of the Case~(1) and
Case~(3)
given in Proposition~\ref{th:2.4}. For Cases~(3) and
(4) mentioned in the remark, $(A^{\ast},[\cdot,\cdot,\cdot]^{\ast})$ is still a 3-Lie algebra.}
\end{rmk}

\subsection{Pseudo-metric 3-Lie algebras in dimension 8}

By Theorem~\ref{thm:relations} and the results in the previous
subsection, we can get the corresponding pseudo-metric 3-Lie
algebras in dimension 8 (Manin triples of 3-Lie algebras) as
follows.

\begin{thm} Let $A$ be a 4-dimensional vector space with a basis $\{e_1,\cdots, e_4\}$ and $\{e^{\ast}_1,\cdots,e^{\ast}_4\}$ be the dual basis.
On the vector space $A\oplus A^*$ define a bilinear form  $(\cdot,\cdot)_{+}$ by Eq.~\eqref{eq:bf}, that is, with respect to the basis $\{e_1,\cdots,e_4, e^{\ast}_1,\cdots,e^{\ast}_4\}$, it corresponds to the matrix $
    \begin{pmatrix}
  0 & I_n \\ I_n & 0
    \end{pmatrix}$. We can get the following families of 8-dimensional pseudo-metric 3-Lie algebras $(A\oplus A^*, (\cdot,\cdot)_{+})$.
\begin{enumerate}
\item[(1)] The non-zero product of
    3-Lie algebra structure on $A\oplus A^*$ is given by
\begin{eqnarray*}
&~~[e_1,e_2,e_3]=e_4,[e_1,e_2,e_4]=e_3,[e_1,e_3,e_4]=e_2,[e_2,e_3,e_4]=e_1;\\
&~~[e^{\ast}_1,e^{\ast}_2,e^{\ast}_3]^{\ast}=ke^{\ast}_4,~[e^{\ast}_1,e^{\ast}_2,e^{\ast}_4]^{\ast}=ke^{\ast}_3,~[e^{\ast}_1,e^{\ast}_3,e^{\ast}_4]^{\ast}=ke^{\ast}_2,~ [e^{\ast}_2,e^{\ast}_3,e^{\ast}_4]^{\ast}=ke^{\ast}_1,\\
&~~[e_i,e^{\ast}_j,e^{\ast}_k]=e^{\ast}_m,~[e_i,e_j,e^{\ast}_k]=e^{\ast}_m.
\end{eqnarray*}
where the last equation holds for $i<j<k$ and $m$ which is distinct from $i,j,k$.  They correspond to the double construction bialgebras $(A,\Delta)$ given in
Theorem~\ref{th:1111}, where $A$ is the 3-Lie algebra of Case~(1) given in Proposition~\ref{th:2.4}.
\item[(2)]The non-zero product of
    3-Lie algebra structure on $A\oplus A^*$ is given by
\begin{eqnarray*}
  &~~[e_2,e_3.e_4]=e_1,\\
  &~~[e^{\ast}_1,e^{\ast}_2,e^{\ast}_3]^{\ast}=k_1e^{\ast}_2+k_3e^{\ast}_3+c_3e^{\ast}_4,\\
  &~~[e^{\ast}_1,e^{\ast}_2,e^{\ast}_4]^{\ast}=k_2e^{\ast}_2+c_2e^{\ast}_3-k_3e^{\ast}_4,\\
  &~~[e^{\ast}_1,e^{\ast}_3,e^{\ast}_4]^{\ast}=c_1e^{\ast}_2-k_2e^{\ast}_3+k_1e^{\ast}_4,\\
  &~~[e_1,e_2,e^{\ast}_1] = -e^{\ast}_3,[e_1,e_3,e^{\ast}_1] = e{\ast}_2,[e_2,e_3,e^{\ast}_1] = -e{\ast}_1,\\
  &~~[e_2,e^{\ast}_1,e^{\ast}_2]=-k_1e_3-k_2e_4,[e_2,e^{\ast}_2,e^{\ast}_3]=-k_1e_1,[e_2,e^{\ast}_1,e^{\ast}_3]=k_1e_2-c_1e_4,\\
  &~~[e_2,e^{\ast}_2,e^{\ast}_4]=-k_2e_1,[e_2,e^{\ast}_1,e^{\ast}_4]=k_2e_2+c_1e_3,[e_2,e^{\ast}_3,e^{\ast}_4]=-c_1e_1,\\
  &~~[e_3,e^{\ast}_1,e^{\ast}_2]=-k_3e_3-c_2e_4,[e_3,e^{\ast}_2,e^{\ast}_3]=-k_3e_1,[e_3,e^{\ast}_1,e^{\ast}_3]=k_3e_2+k_2e_4,\\
  &~~[e_3,e^{\ast}_2,e^{\ast}_4]=-c_2e_1,[e_3,e^{\ast}_1,e^{\ast}_4]=c_2e_2-k_2e_3,[e_3,e^{\ast}_3,e^{\ast}_4]=k_2e_1,\\
  &~~[e_4,e^{\ast}_1,e^{\ast}_2]=-c_2e_3+k_3e_4,[e_4,e^{\ast}_2,e^{\ast}_3]=-c_3e_1,[e_4,e^{\ast}_1,e^{\ast}_3]=c_3e_2-k_1e_4,\\
  &~~[e_4,e^{\ast}_2,e^{\ast}_4]=k_3e_1,[e_4,e^{\ast}_1,e^{\ast}_4]=-k_3e_2+k_1e_3,[e_4,e^{\ast}_3,e^{\ast}_4]=-k_1e_1.
\end{eqnarray*}
They correspond to the double construction bialgebras $(A,\Delta)$ given in
Theorem~\ref{th:1111}, where $A$ is the 3-Lie algebra of Case~(3) given in Proposition~\ref{th:2.4}.
\item[(3)] The non-zero product of
    3-Lie algebra structure on $A\oplus A^*$ is given by
\begin{eqnarray*}
  &~~[e_2,e_3,e_4]=e_1,[e_1,e_3,e_4]=e_2,\\
  &~~[e^{\ast}_1,e^{\ast}_2,e^{\ast}_3]^{\ast}=ke^{\ast}_3+c_2e^{\ast}_4,[e^{\ast}_1,e^{\ast}_2,e^{\ast}_4]^{\ast}=c_1e^{\ast}_3-ke^{\ast}_4,\\
  &~~[e_2,e_3,e^{\ast}_1]=-e^{\ast}_4,[e_3,e_4,e^{\ast}_1]=-e^{\ast}_2,[e_2,e_4,e^{\ast}_1]=e^{\ast}_3,\\
  &~~[e_1,e_4,e^{\ast}_2]=e^{\ast}_3,[e_1,e_3,e^{\ast}_2]=-e^{\ast}_4,[e_3,e_4,e^{\ast}_2]=-e^{\ast}_2,\\
  &~~[e_3,e^{\ast}_1,e^{\ast}_2]=-ke_3-c_1e_4,[e_4,e^{\ast}_1,e^{\ast}_2]=-c_2e_3+ke_4,\\
  &~~[e_3,e^{\ast}_1,e^{\ast}_3]=ke_2,[e_4,e^{\ast}_1,e^{\ast}_3]=c_2e_2,\\
  &~~[e_3,e^{\ast}_1,e^{\ast}_4]=c_1e_2,[e_4,e^{\ast}_1,e^{\ast}_4]=-ke_2,\\
  &~~[e_3,e^{\ast}_2,e^{\ast}_3]=-ke_1,[e_4,e^{\ast}_2,e^{\ast}_3]=-c_2e_1,\\
  &~~[e_3,e^{\ast}_2,e^{\ast}_4]=-c_1ke_1,[e_4,e^{\ast}_2,e^{\ast}_4]=ke_1.
\end{eqnarray*}
They correspond to the double construction bialgebras $(A,\Delta)$ given in
Theorem~\ref{th:1111}, where $A$ is the 3-Lie algebra of Case~(4) given in Proposition~\ref{th:2.4}.

\item[(4)] The non-zero product of
    3-Lie algebra structure on $A\oplus A^*$ is given by
\begin{eqnarray*}
  &~~[e_1,e_2,e_4]=e_3,[e_1,e_e,e_4]=e_2,[e_2,e_3,e_4]=e_1,[e^{\ast}_1,e^{\ast}_2,e^{\ast}_3]^{\ast}=ce^{\ast}_4,\\
  &~~[e_1,e_3,e^{\ast}_2]=-e^{\ast}_4,[e_1,e_2,e^{\ast}_3]=-e^{\ast}_4,[e_1,e_4,e^{\ast}_2]=e^{\ast}_3,[e_1,e_4,e^{\ast}_3]=e^{\ast}_2,\\
  &~~[e_2,e_3,e^{\ast}_1]=-e^{\ast}_4,[e_2,e_4,e^{\ast}_1]=e^{\ast}_3,[e_2,e_4,e^{\ast}_3]=-e^{\ast}_1,[e_3,e_4,e^{\ast}_2]=-e^{\ast}_1,\\
  &~~[e_3,e_4,e^{\ast}_1]=-e^{\ast}_2,[e_4,e^{\ast}_1,e^{\ast}_2]=-ce_3,[e_4,e^{\ast}_1,e^{\ast}_3]=ce_2,[e_4,e^{\ast}_2,e^{\ast}_3]=-ce_1.
\end{eqnarray*}
They correspond to the double construction bialgebras $(A,\Delta)$ given in
Theorem~\ref{th:1111}, where $A$ is the 3-Lie algebra of Case~(7) given in Proposition~\ref{th:2.4}.
\end{enumerate}
\end{thm}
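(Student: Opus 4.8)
The plan is to obtain the $3$-Lie algebra and pseudo-metric structures for free from the results already established, and then reduce the explicit multiplication table to a collection of coadjoint computations. By Theorem~\ref{th:1111}, for each of Cases~(1), (3), (4) and (7) the pair $(A,\Delta)$, with $\Delta$ taken from the corresponding item of Lemma~\ref{th:3}, is a double construction $3$-Lie bialgebra. Hence by Theorem~\ref{thm:relations}, $((A\oplus A^*,(\cdot,\cdot)_+),A,A^*)$ is a standard Manin triple; in particular $(A\oplus A^*,[\cdot,\cdot,\cdot]_{A\oplus A^*})$ is a $3$-Lie algebra. Since $(\cdot,\cdot)_+$ is nondegenerate, symmetric and invariant (as noted just before Theorem~\ref{thm:relations}), the resulting $3$-Lie algebra $A\oplus A^*$ is pseudo-metric. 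Thus the only remaining task is to write the bracket $[\cdot,\cdot,\cdot]_{A\oplus A^*}$ of Eq.~\eqref{eq:formularAA*} explicitly in each case.

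First I would record the three types of brackets separately. The bracket of three elements of $A$ is the given $3$-Lie bracket of $A$ (Proposition~\ref{th:2.4}), and the bracket of three elements of $A^*$ is the dual bracket $[\cdot,\cdot,\cdot]^*$ listed in the Remark following Theorem~\ref{th:1111}. For the mixed brackets, specializing Eq.~\eqref{eq:formularAA*} makes all but one summand vanish, since every remaining term either brackets an entry against $0$ or applies a coadjoint map with a zero argument; concretely,
\begin{equation*}
[e_i,e_j,e_k^*]_{A\oplus A^*}=\mathrm{ad}^*_{e_i,e_j}e_k^*,\qquad
[e_i,e_j^*,e_k^*]_{A\oplus A^*}=\mathfrak{ad}^*_{e_j^*,e_k^*}e_i.
\end{equation*}
So the mixed products are entirely determined by the two coadjoint representations, and by total skew-symmetry of $[\cdot,\cdot,\cdot]_{A\oplus A^*}$ these two forms cover all orderings.

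The coadjoint actions are then computed from the definition Eq.~\eqref{eq:dual}. Writing $[e_i,e_j,e_l]=\sum_m T_{ijl}^m e_m$, one gets $\mathrm{ad}^*_{e_i,e_j}e_k^*=-\sum_l T_{ijl}^k e_l^*$; dually, from the structure constants of $(A^*,[\cdot,\cdot,\cdot]^*)$ recorded in the Remark one computes $\mathfrak{ad}^*_{e_j^*,e_k^*}e_i$ in the same way. Substituting the explicit $T^m_{ijk}$ of each case from Proposition~\ref{th:2.4} together with the explicit dual structure constants, and then collapsing to a set of representative basis brackets via skew-symmetry, yields precisely the tables in items (1)--(4).

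The conceptual content is fully carried by Theorems~\ref{th:1111} and~\ref{thm:relations}, so what remains is bookkeeping. The main practical obstacle will be controlling signs and index permutations in the two coadjoint maps, since $\mathrm{ad}^*$ and $\mathfrak{ad}^*$ each introduce a sign and $[\cdot,\cdot,\cdot]_{A\oplus A^*}$ is totally skew in its three arguments. This is heaviest in Case~(3), whose dual bracket involves the six parameters $k_1,k_2,k_3,c_1,c_2,c_3$, so I would organize that computation by first tabulating $\mathrm{ad}^*_{e_i,e_j}$ and $\mathfrak{ad}^*_{e_j^*,e_k^*}$ on all basis vectors before assembling the final list.
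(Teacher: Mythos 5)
Your proposal is correct and follows essentially the same route as the paper, which derives the theorem from Theorem~\ref{thm:relations} together with Theorem~\ref{th:1111} and then dismisses the rest as ``a straightforward computation''; your reduction of the mixed brackets to $[e_i,e_j,e_k^*]=\mathrm{ad}^*_{e_i,e_j}e_k^*$ and $[e_i,e_j^*,e_k^*]=\mathfrak{ad}^*_{e_j^*,e_k^*}e_i$ with $\mathrm{ad}^*_{e_i,e_j}e_k^*=-\sum_l T^k_{ijl}e_l^*$ is exactly the bookkeeping the paper leaves implicit. In fact your write-up is more explicit than the paper's own one-line proof, and carrying it out would also let you catch the sign and typographical slips in the published tables.
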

The proof is by a straightforward computation.

\smallskip

\noindent
{\bf Acknowledgements.}
This work  was supported by the Natural Science Foundation of China (Grant Nos.~11371178, 11425104).

\end{document}